\journal{Stochastic Processes and their Applications}
\newcommand{\tikzmark}[1]{\tikz[overlay,remember picture] \node (#1) {};}
\newcommand*{\AddNote}[4]{%
    \begin{tikzpicture}[overlay, remember picture]
        \draw [decoration={brace,amplitude=0.5em},decorate,ultra thick,red]
            ($(#3)!([yshift=1.5ex]#1)!($(#3)-(0,1)$)$) --  
            ($(#3)!(#2)!($(#3)-(0,1)$)$)
                node [align=center, text width=2.5cm, pos=0.5, anchor=west] {#4};
    \end{tikzpicture}
}%
\newcommand{\ceil}[1]{\lceil {#1} \rceil}
\def\balign#1\ealign{\begin{align}#1\end{align}}
\def\baligns#1\ealigns{\begin{align*}#1\end{align*}}
\def\balignat#1\ealign{\begin{alignat}#1\end{alignat}}
\def\balignats#1\ealigns{\begin{alignat*}#1\end{alignat*}}
\def\bitemize#1\eitemize{\begin{itemize}#1\end{itemize}}
\def\benumerate#1\eenumerate{\begin{enumerate}#1\end{enumerate}}
\newenvironment{talign}
 {\align}
 {\endalign}
\newenvironment{talign*}
 {\csname align*\endcsname}
 {\endalign}
\definecolor{darkmidnightblue}{rgb}{0.0, 0.2, 0.4}
\definecolor{darkpowderblue}{rgb}{0.0, 0.2, 0.6}
\definecolor{dukeblue}{rgb}{0.0, 0.0, 0.61}
\def\bvartheta{\boldsymbol\vartheta}  
\def\btheta{\boldsymbol\theta}
\def\bxi{\boldsymbol\xi}
\def\bL{\boldsymbol L}
\def\bW{\boldsymbol W}
\def\bG{\boldsymbol G}
\def\wass{{\sf W}}
\def\bV{\boldsymbol V}
\def\bg{\boldsymbol g}
\def\bv{\boldsymbol v}
\newtheorem{theorem}{Theorem}
\newtheorem{assumption}{Assumption}
\newtheorem{proposition}{Proposition}
\newtheorem{lemma}{Lemma}
\newtheorem{corollary}{Corollary}
\def\bB{\boldsymbol{B}}
\def\bfI{\mathbf I}
\def\bfA{\mathbf A}
\def\bfB{\mathbf B}
\def\bfC{\mathbf C}
\def\Ltwo{\mathbb L_2}
\newcommand{\RR}{\mathbb R}
\def\rmd{{\rm d}}
\def\E{\mathbb E}
\newcommand{\grad}{\nabla}
\newcommand{\norm}[1]{\left\| #1 \right\|}
\definecolor{darkmidnightblue}{HTML}{003366}    
\definecolor{midnightblue}{HTML}{0059b3}
\definecolor{chromered}{HTML}{f14233}
\begin{document}

\begin{frontmatter}

\title{Parallelized Midpoint Randomization for Langevin Monte Carlo}

\author[inst1]{Lu Yu}

\affiliation[inst1]{organization={Department of Data Science},%
            addressline={City University of Hong Kong}, 
          country={Hong Kong}
            }

\author[inst2]{Arnak Dalalyan}

\affiliation[inst2]{organization={CREST/ENSAE Paris},%
           country={France}
}

\begin{abstract}
We study the problem of sampling from a target probability density function in frameworks where parallel evaluations of the log-density gradient are feasible. Focusing on smooth and strongly log-concave densities, we revisit the parallelized randomized midpoint method and investigate its properties using recently developed techniques for analyzing its sequential version. Through these techniques, we derive upper bounds on the Wasserstein distance between sampling and target densities. These bounds quantify the substantial runtime improvements achieved through parallel processing.
\end{abstract}

\begin{keyword}
Parallel computing, Markov Chain Monte Carlo, Langevin algorithm, Midpoint randomization, Mixing rate
\end{keyword}

\end{frontmatter}

\section{Introduction}
Parallel computing is widespread in modern machine learning and scientific computing, allowing for the simultaneous execution of multiple tasks, resulting in faster processing times~\cite{grama2003introduction,quinn1994parallel,kumar1994introduction}.
This is particularly crucial for applications involving intensive calculations, such as scientific simulations or big data analysis. Moreover, parallel computing optimizes resource utilization by distributing tasks across multiple processors or computers, reducing the time and energy required for a given task and enhancing efficiency and cost-effectiveness.

Optimization problems, particularly in machine learning and scientific computing, often involve large datasets and complex objective functions. 
Solving such problems on a single processor can be time-consuming. In the field of optimization, leveraging parallel computing to expedite the solution of computationally expensive problems is a common practice. Many optimization algorithms, such as gradient descent, can be parallelized~\cite{recht2011hogwild,jain2018parallelizing}. The workload is distributed among processors, with each processor updating a portion of the model parameters based on its subset of the data.  In deep learning, mini-batch stochastic gradient descent is often used~\cite{abadi2016tensorflow,you2017large}. Parallelization involves distributing these mini-batches across different processors, allowing for efficient model training.

On the other hand, traditional sampling methods often involve sequential processes, which may become computationally burdensome for large datasets or complex models. 
Parallel computing addresses this challenge by distributing the workload across multiple processors, enabling the concurrent execution of sampling tasks and enhancing computational efficiency, thus accelerating the generation of samples in statistical applications. 
Over the past few decades, there has been substantial research on parallel computing for sampling methods, with a notable focus on Monte Carlo methods in Monte Carlo simulations~\cite{rosenthal2000parallel,bhavsar1987design,esselink1995parallel}, particularly in the context of Markov chain Monte Carlo (MCMC)~\cite{neiswanger2014asymptotically,corander2006bayesian,nishihara2014parallel,gonzalez2011parallel}.

However, despite the intrinsic analogy between sampling and optimization~\cite{dalalyan2017further,dalalyan2017theoretical,durmus2019high,durmus2016sampling}, and the extensive studies in parallel schemes for Monte Carlo methods, the application of parallel computing to Langevin Monte Carlo (LMC)~\cite{RobertsTweedie96,Dalalyan14,durmus2017,erdogdu2021convergence,mousavi2023towards,raginsky2017non,Erdogdu18,mou2022improved,erdogdu2022convergence,chewi2023log,chewi2021analysis} remains relatively unexplored\footnote{We note that in a concurrent work, which very recently appeared on arXiv, \cite{anari2024fast} propose parallelizations of Langevin Monte Carlo and kinetic Langevin Monte Carlo for a smooth potential and a target distribution satisfying a
log-Sobolev inequality.}, given that LMC is one of the canonical sampling algorithms in MCMC.
An exception to this is highlighted in the recent work by \cite{shen2019randomized}, where they introduce a parallelized version of the midpoint randomization for kinetic Langevin Monte Carlo~\cite{cheng2018underdamped,dalalyan_riou_2018,shen2019randomized,ma2019there,zhang2023improved,monmarche2021high}. 
This innovative approach notably accelerates the sampling process. 
Building upon the foundations laid by~\cite{shen2019randomized}, we 
explore parallel computing for the midpoint randomization method in Langevin Monte Carlo~\cite{he2020ergodicity,yu2023langevin}. 
Our contributions can be summarized as follows.
\begin{itemize}
    \item  
	\textbf{Parallelized Randomized Midpoint Method 
        for Langevin Monte Carlo~~} 
	We introduce a parallel computing scheme for the randomized midpoint method applied to Langevin Monte Carlo in~\Cref{RLMCp}. We derive the corresponding convergence guarantees in $\wass_2$-distance in~\Cref{thm:rlmc}, providing small constants and explicit dependence on the initialization and choice of the parameters.
    \item  
	\textbf{Parallelized Randomized Midpoint Method for kinetic
        Langevin Monte Carlo~~~} \\
	In~\Cref{thm:rklmc}, we present a comprehensive analysis of the parallel computing for the randomized midpoint method applied to kinetic Langevin Monte Carlo. Compared to previous work, our result offers \textbf{a)} small constants and the explicit dependence on the initialization, \textbf{b)} does not require the initialization to be at minimizer of the potential, \textbf{c)} removes the linear dependence on the number of iterations, \textbf{d)} improves the
    dependence on condition number and the inverse precision level.
\end{itemize}

In summary, this work provides a comprehensive analysis of the parallelized randomized midpoint method as applied to Langevin Monte Carlo. The primary objective is to address the following question in the context of sampling:

\textit{How many units of time are required to ensure that the generated random vector has a distribution within a prescribed distance from the target distribution?}

We investigate this question under two distinct settings:

\begin{description}
    \item[Unlimited parallel units:] Within one unit of time, we assume an unlimited capacity for parallel evaluations of the gradient of the log-density.
    \item[Limited parallel units:] Within one unit of time, we assume a maximum of \(R\) parallel evaluations of the gradient of the log-density, where \(R \in \mathbb{N}\) is a fixed parameter.
\end{description}

\textbf{Notation.}
Let $(\RR^p,\|\cdot\|_2)$ denote the $p$-dimensional 
Euclidean space, and let $\mathcal{B}(\RR^p)$ be its 
Borel $\sigma$-algebra. The letter $\btheta$ represents 
a deterministic vector, while its calligraphic counterpart $\bvartheta$ represents a random vector. Let $\bfI_p$ and $\mathbf 0_p$ denote the $p \times p$ identity and zero matrices, respectively. For two symmetric $p \times p$ matrices $\bfA$ and $\bfB$, we define the relations $\bfA \preccurlyeq\bfB$ and $\bfB\succcurlyeq \bfA$ to mean that $\bfB - \bfA$ is positive semi-definite. 
The gradient and the Hessian of a function $f:\RR^p \rightarrow \RR$ are denoted by $\nabla f$  and 
 $\nabla^2 f$, respectively.
Given any pair of measures $\mu$ and $\nu$ defined on $(\RR^p,\mathcal{B}(\RR^p))$,
the Wasserstein-2 distance between $\mu$ and $\nu$ is defined as
\begin{equation}
\wass_2(\mu,\nu) = \Big(\inf_{\varrho\in \Gamma(\mu,\nu)} \int_{\RR^p\times \RR^p}
\|\btheta -\btheta'\|_2^2 \,\rmd\varrho(\btheta,\btheta')\Big)^{1/2},
\end{equation}
where the infimum is taken over all joint distributions $\varrho$ that have $\mu$ and $\nu$ as marginals. 
The Kullback-Leibler (KL) divergence of $\mu$ from $\nu$ 
is defined as
\begin{align}
D_{\sf KL}(\mu||\nu)
=\begin{cases}
\E_{\bvartheta\sim\mu}\Big[\log\frac{d\mu}{d\nu}(\bvartheta )
\Big] &, \text{ if }\mu \text{ is absolutely continuous with respect to }\nu\, \\
\infty &,\text{ otherwise}\,.
\end{cases}
\end{align}
The ceiling function maps $x\in\RR$ to the smallest 
integer $k$ such that $k\geqslant x$, denoted by 
$\ceil{x}$.

\section{Parallelized randomized midpoint 
discretization: the vanilla Langevin diffusion}
\label{sec:rlmc}

Let the function $f:\RR^p\to\RR$, referred to as the 
potential, be such that $\int_{\mathbb R^p} e^{-f
(\btheta)}\,d\btheta$ is finite. We call target 
distribution the probability distribution having 
the probability density function 
\begin{align}
    \pi(\btheta)\propto \exp\{-f(\btheta)\},\qquad 
    \btheta\in\mathbb R^p.
\end{align}
The goal of sampling is to devise an algorithm that 
generates a random vector in $\mathbb R^p$ from a 
distribution which is close to the target one. 
Throughout the paper, we assume that the 
potential function $f$ is $M$-smooth 
and $m$-strongly convex for some constants $m,M
\in(0,\infty)$ such that $m\leqslant M$.

\begin{assumption}\label{asm:A-scgl}
   The function $f: \mathbb R^p\to\mathbb R$ 
   is twice differentiable, and its Hessian 
   matrix $\nabla^2 f$ satisfies 
\begin{align}
    m\bfI_p\preccurlyeq \nabla^2 f(\btheta)
    \preccurlyeq M\bfI_p,\qquad \forall 
    \btheta\in\mathbb R^p
\end{align}
for some constants $0<m\leqslant M<\infty$. We denote by
$\kappa$ the condition number $M/m$. 
\end{assumption}

Let $\bvartheta_0$ be a random vector drawn from a 
distribution $\nu$ on $\mathbb R^p$ and let $\bW 
=(\bW_t: t\geqslant 0)$ be a $p$-dimensional 
Brownian motion independent of $\bvartheta_0$. 
Using the potential $f$, the random variable 
$\bvartheta_0$ and the process $\bW$, one can define 
the stochastic differential equation
\begin{align}\label{eq:LD}
    d\bL_t^{\sf LD} = -\nabla f(\bL_t^{\sf LD})\,dt 
    + \sqrt{2}\,\rmd\bW_t,
    \quad t\geqslant 0,\qquad \bL^{\sf LD}_0=\bvartheta_0.
\end{align}
This equation has a unique strong solution, which 
is a continuous-time Markov process, termed Langevin 
diffusion. Under some further assumptions on $f$, 
such as strong convexity or dissipativity, the 
Langevin diffusion is ergodic, geometrically
mixing and has $\pi$ as its unique invariant 
density \citep{bhattacharya1978}. In particular, 
the distribution of $\bL_t^{\sf LD}$ converges to 
$\pi$ when $t$ tends to infinity. 

Therefore, we can sample from the distribution defined 
by $\pi$ by using a suitable discretization of the Langevin 
diffusion. The Langevin Monte Carlo (LMC) method is 
based on this idea, combining the aforementioned 
considerations with the Euler discretization. 
Specifically, for small values of $h \geqslant 0$ and 
$\Delta_h\bW_t = \bW_{t+h} - \bW_t$, the following 
approximation holds
\begin{align}
    \bL^{\sf LD}_{t+h} & = \bL^{\sf LD}_t- \int_0^h  
    \nabla f(\bL_{t+s}^{\sf LD})\,\rmd s + \sqrt{2}\;
		\Delta_h\bW_t \approx \bL_t^{\sf LD} - h \nabla f
		(\bL_t^{\sf LD}) + \sqrt{2}\;\Delta_{h}\bW_t. 
\end{align}
By repeatedly applying this approximation, we can 
construct the Markov chain $(\bvartheta^{\sf LMC}_k:k\in
\mathbb N)$ given by
\begin{align}\label{eq:LMC}
    \bvartheta_{k+1}^{\sf LMC} = \bvartheta_k^{\sf LMC} 
    -h\nabla f(\bvartheta_k^{\sf LMC})
    +\sqrt{2}\,(\bW_{(k+1)h}-\bW_{kh}).
\end{align}
Since $\bvartheta^{\sf LMC}_k\approx \bL^{\sf LD}_{kh}$, 
the distribution of $\bvartheta^{\sf LMC}_k$ is expected
to be close to the target $\pi$ as $k$ tends to 
infinity and $h$ goes to zero. 

Note that the Brownian motion in \eqref{eq:LMC} need not be identical to that used in the Langevin diffusion; an alternative Brownian motion with better coupling properties may be preferable. However, our proofs rely on synchronous sampling, requiring that discrete-time processes and their continuous-time counterparts use the same Brownian motion.  
Therefore, for the sake of consistency and clarity, we 
maintain the notation $\bW$ throughout this discussion, 
both in equations \eqref{eq:LMC} and \eqref{eq:LD}.

Let $U$ be a random variable uniformly distributed in
$[0,1]$ and independent of the Brownian motion $\bW$. 
The randomized midpoint method~\cite{shen2019randomized} exploits the approximation
\begin{align}
    \bL^{\sf LD}_{t+h}&=\bL^{\sf LD}_t - \int_0^h 
    \nabla f(\bL^{\sf LD}_{t+s})\, \rmd s + \sqrt{2}\;
		\Delta_h\bW_t \label{update:LD}\\
    &\approx \bL^{\sf LD}_t - h \nabla f(\bL^{\sf LD
		}_{t+hU}) + \sqrt{2}\;\Delta_h\bW_t,
        \label{update:LD2}
\end{align}
which has the advantage of being unbiased, in the sense that
the expectation with respect to $U$ of the expression 
in \eqref{update:LD2} equals the right-hand side of 
\eqref{update:LD}. 
Let us now introduce an alternative approach for approximating the integral $\int_0^h \nabla f(\bL_{t+s}^{\sf LD})\rmd s$, 
 as proposed in \citep{shen2019randomized}. 
This approach 
offers substantial reductions in running time through the 
parallelization of computations. The interval $[0,h]$ is 
divided into $R$ segments, each of length ${h}/{R}$.  
Choosing $U_r$ uniformly from the $r$-th interval $\big[
\frac{r-1}{R},\frac{r}{R}\big]$ for $r=1,\dots,R$, 
the approximation within each interval is given by
\begin{align}\label{eq:approx}
	\int_0^h  \nabla f(\bL_{t+s}^{\sf LD})\,\rmd s = 
	\sum_{r=1}^R \int_{\frac{r-1}{R}h}^{\frac{r}{R}h}  
	\nabla f(\bL_{t+s}^{\sf LD})\,\rmd s
	\approx \sum_{r=1}^R \frac{h}{R}\grad f(\bL^{\sf LD
	}_{t+hU_r}).
\end{align}
Assume that there exists an approximation $\bvartheta_k$ 
of $\bL^{\sf LD}_t$ at time $t = kh$.  To advance to $t+h = (k+1)h$ using relations \eqref{update:LD} and \eqref{eq:approx}, we need to approximate the sequence $(\bL^{\sf LD}_{t+hU_1},\ldots,\bL^{\sf LD}_{t+hU_R})$. We begin with a rough approximation $(\bvartheta_k^{(0,1)},\dots,\bvartheta_k^{(0,R)})=(\bvartheta_k,\ldots,\bvartheta_k)$, then perform $Q$ sequential steps to update this vector, each involving multiple parallel computations. At step $q$, the approximation $(\bvartheta^{(q,1)}_{k}, \ldots, \bvartheta^{(q,R)}_{k})$ depends only on the previous values $(\bvartheta^{(q-1,r)}_k)_{r=1}^R$, enabling parallelization.

Since, similar to the approximation in \eqref{eq:approx}, 
it holds that
\begin{align}
    L_{t+hU_r}^{\sf LD} 
		&= L_t^{\sf LD} - \sum_{j=1}^{r-1} \int_{\frac{j-1}{R} 
			h}^{\frac{j}{R}h}  \nabla f(\bL_{t+s}^{\sf LD})\,\rmd s  
			- \int_{\frac{r-1}{R} h}^{hU_r} \nabla f(\bL_{t+s}^{
			\sf LD})\,\rmd s + \sqrt{2}\,\Delta_{hU_r} \bW_t\\
    &\approx L_t^{\sf LD} -  \sum_{j=1}^{r-1} \frac{h}{R}
			\grad f(\bL^{\sf LD}_{t+hU_j}) -  h(U_r - \tfrac{r-1}{R}
			)\nabla f(\bL^{\sf LD}_{t+hU_r}) + \sqrt{2}\,
			\Delta_{hU_r} \bW_t,
\end{align}
the update rule for $\bvartheta_k^{(q,r)}$, inspired by the work of 
Shen and Lee \citep{shen2019randomized}, is 
defined as follows
\begin{align}
    \bvartheta^{(q,r)}_k = \bvartheta_k -  \sum_{j=1}^{r-1} 
		\frac{h}{R}\grad f(\bvartheta_k^{(q-1,j)}) -  h(U_r - 
		\tfrac{r-1}{R})\nabla f(\bvartheta_k^{(q-1,r)})
    + \sqrt{2}\,\Delta_{hU_r} \bW_t.
\end{align}
This leads to the parallelized version of RLMC, referred to 
as pRLMC, the formal definition of which is provided in~\Cref{RLMCp}. 
For simplicity, the superscript $\sf pRLMC$ is omitted therein. 
\Cref{fig:RLMCp} illustrates this algorithm.

\begin{algorithm}[ht]
\small
\caption{Parallelized RLMC}\label{RLMCp}
\noindent\textbf{Input}: number of parallel steps $R$, number of sequential iterations $Q$, step size $h$, number of iterations $n$, initial point~$\bvartheta_0.$\\
\textbf{Output}: iterate $\bvartheta^{}_{n+1}$
\begin{algorithmic}[1]
\For{\texttt{$k=1$ to $n$}}
   \State Draw $U_{kr}$ uniformly from $\big[\frac{r-1}{R},\frac{r}{R}\big]$, $r=1,\dots,R$.
   \State Generate $\bxi_{k}=\sqrt{2}(\bW_{(k+1)h}-\bW_{kh})$. 
   \State Generate $\bxi_{kr}=\sqrt{2}(\bW_{(k+U_{kr})h} -
   \bW_{kh})$, $r=1,\dots,R$.
   \State Set $\bvartheta_{k}^{(0,r)}=\bvartheta_k, r=1,\dots,R$.
   \For{\texttt{$q=1$~to~$Q-1$}}
    \For{\texttt{$r=1$ to $R$} in parallel} \hspace{4cm} $\tikzmark{listing-4-end}$
    \State
    $a_{kjr}= \min\{\frac{1}{R},U_{kr}-\frac{j-1}{R}\},j=1,
    \dots,r$ 
     \State $\bvartheta_{k}^{(q,r)}=\bvartheta_k-h\sum_{j=1}^r a_{kjr} \grad f\big(\bvartheta_{k}^{(q-1,j)}\big) 
     + \bxi_{kr}$.
     \EndFor $\tikzmark{listing-7-end}$
    \EndFor		
    \State  $\bvartheta_{k+1}=\bvartheta_k -\frac{h}{R}\sum_{r=1}^R \grad f\big(\bvartheta_{k}^{(Q-1,r)}\big)
    +\bxi_k$.
\EndFor	
\end{algorithmic}
\AddNote{listing-4-end}{listing-7-end}{listing-4-end}{In parallel}
\end{algorithm}

\begin{figure}[ht]
\centering
\includegraphics[width=0.75\linewidth]{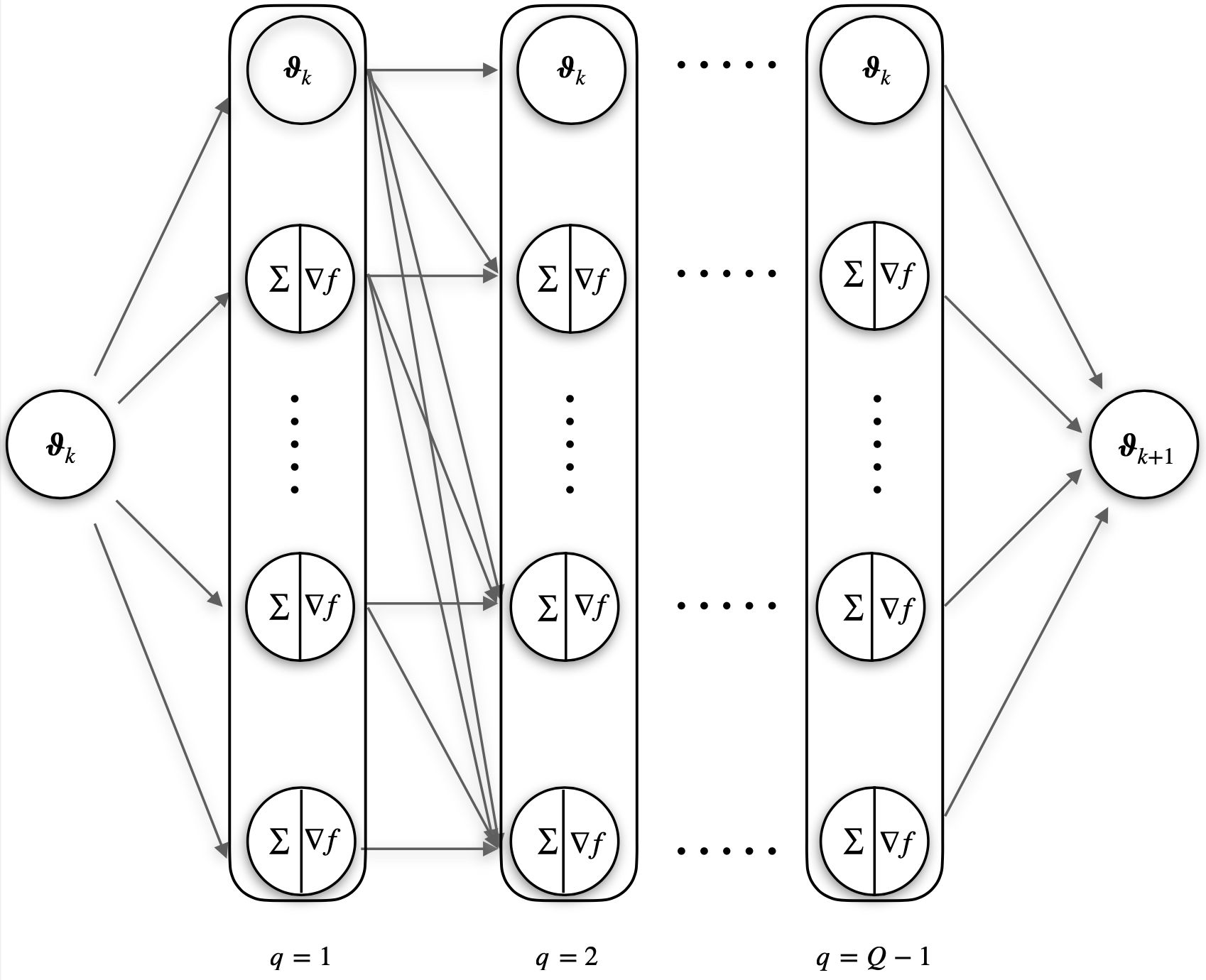}
   \caption{Visualization of Parallelized RLMC.}
   \label{fig:RLMCp}
\end{figure}

\normalsize

We state below the theoretical guarantee for the proposed 
parallelized RLMC algorithm. Proof of all the  results stated 
in this paper are deferred to the supplementary material.
\begin{theorem}
\label{thm:rlmc}
    If the function $f:\RR^p\to \RR$ satisfies  
    Assumption~\ref{asm:A-scgl} and the parameters $(h,Q,R)$ are 
    such that 
    $\bar h^{Q}+\bar h/R+(\bar h^{Q-1}+\bar h/R^{3/2})\sqrt{\kappa \bar h}\leqslant 0.1$ with $\bar h=Mh,$
    then, 
    for every $n\geqslant 1$, the 
    the distribution $\nu_n^{\textup{\sf pRLMC}}$ of 
    $\bvartheta_n^{\textup{\sf pRLMC}}$ satisfies
    \begin{align}\label{ineq:RLMC}
        \wass_2(\nu_n^{\sf pRLMC}, \pi)\leqslant 1.03e^{-mnh/2}
        \wass_2(\nu_0,\pi) + 2.1 \bigg(\bar h^{Q}+\frac{\bar h}
        {\sqrt{R}}+\Big(\bar h^{Q-1}+\frac{\bar h}{R}\Big)
        \sqrt{\kappa \bar h}\bigg) \sqrt{\frac{p}{m}}.
    \end{align}
\end{theorem}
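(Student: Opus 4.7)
The plan is to use a synchronous coupling between the pRLMC chain $(\bvartheta_k^{\sf pRLMC})$ and the Langevin diffusion $(\bL_t^{\sf LD})$ started from the invariant law $\pi$. Since both chains share the same Brownian motion $\bW$, any bound $\E[\|\bvartheta_n^{\sf pRLMC}-\bL_{nh}^{\sf LD}\|_2^2]^{1/2}$ immediately controls $\wass_2(\nu_n^{\sf pRLMC},\pi)$. The classical one-step contraction for synchronously coupled diffusions under Assumption \ref{asm:A-scgl} gives, for two diffusions $\bL$ and $\tilde{\bL}$ driven by the same Brownian motion, a contraction of factor $e^{-mh}$ per step of length $h$; thus the whole task reduces to controlling the \emph{one-step discretization error} at round $k$, that is, the $L_2$-distance between $\bvartheta_{k+1}$ and the image of $\bvartheta_k$ under the exact diffusion semigroup.

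First I would write the diffusion-based update in the form $\bL_{(k+1)h}^{\sf LD} = \bL_{kh}^{\sf LD} - \int_0^h \nabla f(\bL_{kh+s}^{\sf LD})\,\rmd s + \sqrt{2}\,\Delta_h\bW_{kh}$, then decompose the pRLMC error into three pieces: (i) a \emph{Riemann-sum stochastic error} coming from replacing $\int_0^h \nabla f(\bL_{kh+s}^{\sf LD})\,\rmd s$ by $(h/R)\sum_r \nabla f(\bL_{kh+hU_{kr}}^{\sf LD})$; (ii) a \emph{midpoint approximation error} comparing $\nabla f(\bL_{kh+hU_{kr}}^{\sf LD})$ with $\nabla f(\bvartheta_k^{(Q-1,r)})$; (iii) a \emph{propagation error} relating $\bL_{kh}^{\sf LD}$ to $\bvartheta_k$. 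For (i), the key observation is that the summands are conditionally independent across $r$ given the past and each is an unbiased estimator of the corresponding sub-integral; a variance computation using $M$-smoothness produces the $\bar h/\sqrt{R}$ factor (after multiplying by $h$). For (iii), the $M$-smoothness of $f$ converts position error into gradient error, contributing a $\sqrt{\kappa\bar h}$-type mixing factor via Young's inequality in the recursion (this is where the $\sqrt{\kappa\bar h}$ comes from, since $(Mh)\cdot \|\cdot\|/\sqrt{mh}\sim \sqrt{\kappa\bar h}\cdot \|\cdot\|$).

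Part (ii) is the main obstacle and requires the inner analysis of the $Q$ nested fixed-point iterations. The strategy is to show, by induction on $q$, that
\begin{align*}
\max_{r}\E\bigl[\|\bvartheta_k^{(q,r)}-\bL_{kh+hU_{kr}}^{\sf LD}\|_2^2\bigr]^{1/2}
\leqslant C\,\bar h\cdot\max_r\E\bigl[\|\bvartheta_k^{(q-1,r)}-\bL_{kh+hU_{kr}}^{\sf LD}\|_2^2\bigr]^{1/2}+(\text{base error from }\bvartheta_k),
\end{align*}
using that one step of the update is a contraction of factor $\bar h$ in the midpoint variables thanks to the $h/R$ coefficients and $M$-smoothness. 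The base term is the error at $q=0$, namely $\|\bvartheta_k^{(0,r)}-\bL_{kh+hU_{kr}}^{\sf LD}\|_2$, which combines the diffusion movement over a time $hU_{kr}$ (of order $\sqrt{ph/m}\cdot$scale) with the sampling error at level $k$. Iterating this $Q-1$ times yields error of order $\bar h^{Q-1}$ times this base term, explaining the $(\bar h^{Q-1})\sqrt{\kappa\bar h}\sqrt{p/m}$ contribution after reinjection into (ii), while the final contraction from step $Q-1$ to step $Q$ combined with the Riemann sum error accounts for the $\bar h^Q$ term.

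Finally I would assemble these estimates into a single recursion of the form
\begin{align*}
\E\bigl[\|\bvartheta_{k+1}-\bL_{(k+1)h}^{\sf LD}\|_2^2\bigr]^{1/2}
\leqslant (1-\tfrac{mh}{2})\,\E\bigl[\|\bvartheta_k-\bL_{kh}^{\sf LD}\|_2^2\bigr]^{1/2}+\mathcal{E},
\end{align*}
where $\mathcal{E}$ is the constant bound in the parentheses of \eqref{ineq:RLMC} multiplied by $\sqrt{p/m}$; here the smallness condition $\bar h^Q+\bar h/R+(\bar h^{Q-1}+\bar h/R^{3/2})\sqrt{\kappa\bar h}\leqslant 0.1$ is precisely what is needed to absorb the cross terms and to pass from the factor $e^{-mh}$ to $(1-mh/2)$ while generating the leading constant $1.03$. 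Unrolling the recursion and summing the geometric series $\sum_{j\geqslant 0}(1-mh/2)^j \leqslant 2/(mh)$ combined with the extra $h$ factor gives the final $\sqrt{p/m}$-scaled bias together with the exponential decay $1.03\,e^{-mnh/2}\wass_2(\nu_0,\pi)$ on the initial distance.
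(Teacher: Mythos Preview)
Your outline has the right high-level ingredients (synchronous coupling, contraction of the diffusion, induction on $q$ to control the inner fixed-point iteration), but the final recursion you propose is too crude to deliver the stated bound. The paper does \emph{not} reduce to a linear recursion $x_{k+1}\leqslant(1-mh/2)x_k+\mathcal E$ with constant $\mathcal E$. Instead it introduces $\bar\bvartheta_{k+1}=\mathbb E_U[\bvartheta_{k+1}]$, the conditional mean over the midpoint randomizations, and uses the orthogonality $\|\bvartheta_{k+1}-\bL_h\|_{\Ltwo}^2=\|\bvartheta_{k+1}-\bar\bvartheta_{k+1}\|_{\Ltwo}^2+\|\bar\bvartheta_{k+1}-\bL_h\|_{\Ltwo}^2$ to obtain the \emph{squared} recursion $x_{k+1}^2\leqslant B_k^2+(e^{-mh}x_k+C_k)^2$. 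Unrolling this gives $x_n\leqslant \rho^n x_0+\sum_k\rho^{n-k}C_k+\big(\sum_k\rho^{2(n-k)}B_k^2\big)^{1/2}$, so the variance term $B_k\sim \bar h\sqrt{hp/R}$ is summed under a square root and contributes $B_k/\sqrt{1-\rho^2}\sim(\bar h/\sqrt R)\sqrt{p/m}$. Your linear recursion would instead sum $B_k/(1-\rho)$, which is larger by a factor $1/\sqrt{mh}=\sqrt{\kappa/\bar h}$ and cannot produce the $\bar h/\sqrt R$ term in \eqref{ineq:RLMC}.

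A second gap is that the per-step errors are not constant: the discretization bounds (the paper's Lemmas for $\|\bar\bvartheta_{k+1}-\bL'_h\|_{\Ltwo}$ and $\|\bvartheta_{k+1}-\bar\bvartheta_{k+1}\|_{\Ltwo}$) carry a term $h\|\nabla f(\bvartheta_k)\|_{\Ltwo}$ coming from the drift of the auxiliary diffusion $\bL'$ started at $\bvartheta_k$. This cannot be absorbed into the contraction factor as you suggest; the paper handles it by a separate descent-type argument (using smoothness and the Polyak--\L{}ojasiewicz inequality) showing $h^2\sum_k\rho^{n-k}\|\nabla f(\bvartheta_k)\|_{\Ltwo}^2\lesssim \rho^n\|\bvartheta_0-\btheta^*\|_{\Ltwo}^2+p/m$. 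It is this lemma, not a degraded contraction, that produces both the $\sqrt{p/m}$ scaling and the $e^{-mnh/2}$ rate (the exponent halves because the transient enters as $\rho^{n/2}$ after taking a square root). Your plan has no analogue of this step, and without it the recursion does not close.
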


This result recovers the convergence rate presented in Theorem 1 in~\cite{yu2023langevin} for the RLMC algorithm, with  $R=1$ and $Q=2$. 
The proof extends techniques from prior work on the RLMC algorithm~\cite{yu2023langevin}, while addressing new challenges introduced by the parallel setting. More details on the 
differences between our proof and the previous analysis can be found in~\ref{app:proof-rlmc}.

\section{Parallelized Randomized midpoint method for the 
kinetic Langevin diffusion}

The present section examines the application of parallelization techniques to the randomized midpoint method for kinetic Langevin processes (RKLMC), originally proposed by  \cite{shen2019randomized}. Our analysis focuses on two key aspects: quantifying the algorithm's accuracy and establishing bounds on its computational complexity. The kinetic Langevin process $\bL^{\textup{\sf KLD}}$ is characterized by a second-order stochastic differential equation, which can be informally expressed as
\begin{align}\label{KLD:1}
  {\textstyle\frac1{\gamma}}\ddot\bL_t^{\sf KLD} + 
  \dot\bL_t^{\sf KLD} = -\nabla f(\bL_t^{\sf KLD}) 
  + \sqrt{2}\,\dot\bW_t,
\end{align}
where the initial conditions are given by $\bL_0^{\sf KLD} = \bvartheta_0$ and $\dot\bL_0^{\sf KLD} = \bv_0$. Here, $\gamma$ is a positive parameter, $\bW$ denotes a standard $p$-dimensional Brownian motion, and dots represent derivatives 
with respect to time $t \geqslant 0$. As $\gamma$ approaches infinity, this process reduces to the (vanilla) Langevin process discussed in the previous section. A rigorous formulation of $\bL^{\sf KLD}$ can be achieved through It\^o's calculus by introducing the velocity field $\bV^{\sf KLD}$ as the time derivative of $\bL^{\sf KLD}$. The resulting joint process $(\bL^{\sf KLD}, \bV^{\sf KLD})$ evolves according to
\begin{align}\label{KLD:2}
    \rmd\bL^{\sf KLD}_t = \bV_t^{\sf KLD}\,\rmd t;
    \quad  
    \tfrac1{\gamma}\rmd\bV^{\sf KLD}_t = -\big(
		\bV_t^{\sf KLD} + \nabla f(\bL_t^{\sf KLD})\big)
		\,\rmd t + \sqrt{2}\, \rmd \bW_t.
\end{align}
Similar to the vanilla Langevin diffusion \eqref{eq:LD}, 
the kinetic Langevin diffusion $(\bL^{\sf KLD}, \bV^{\sf
KLD})$ is a Markov process that exhibits ergodic 
properties when the potential $f$ is strongly 
convex (see \citep{eberle2019} and references 
therein). The invariant density of this process 
is given by
\begin{align}
    p_*(\btheta_,\bv) \propto \exp\{-f(\btheta) - 
		{\textstyle\frac1{2\gamma}}\|\bv\|^2\}, \qquad
    \text{for all}\quad \btheta,\bv\in\mathbb R^p.
\end{align}
Note that the marginal of $p_*$ corresponding to 
$\btheta$ coincides with the target density $\pi$.

To obtain a discrete-time approximation suitable for sampling applications, Shen and Lee \cite{shen2019randomized} proposed the following procedure, termed RKLMC. For iterations $k = 1,2,\ldots$, the algorithm proceeds as:
\begin{enumerate}
\setlength\itemsep{0.01em}
    \item  generate random 
    vectors $(\bxi_k',\bxi''_k,\bxi_k''')$ and a random
    variable $U_k$ such that they are independent of all the variables
    generated at the previous steps and satisfy
    \begin{itemize}
    \setlength\itemsep{0.05em}
        \item $U_k$ is uniformly distributed in $[0,1]$,
        \item conditionally to $U_k = u$, $(\bxi_k',
				\bxi''_k,\bxi'''_k)$ has the same joint 
				distribution as 
            $\big(\bB_u -  \bG_u,
                \bB_1 - \bG_1,
                \gamma \bG_1
            \big)$,
        where $\bB$ is a Brownian motion in $\mathbb R^p$
        and $\bG_t = \int_0^{t} e^{\gamma h(s-t)}\,\rmd 
        \bB_s$.
    \end{itemize}
    \item set $\psi(x) = (1-e^{-x})/x$ and define the 
    $(k+1)$-th iterate of $\bvartheta^{\sf RKLMC}$ by
    \begin{align}
        \bvartheta_{k+U} &=\bvartheta_k+  
        U_k h \psi(\gamma U_k h) \bv_k - {U_k h} \big(1 - \psi 
        (\gamma U_k h)\big)\nabla f(\bvartheta_k) + 
        \sqrt{2h}\,\bxi'_k\\
        \bvartheta_{k+1} &= \bvartheta_k +  h
        \psi(\gamma h)\bv_k -  \gamma h^2( 1 - U_k)
        \psi\big(\gamma h(1-U_k)\big) \nabla f 
        (\bvartheta_{k+U}) + \sqrt{2h} \bxi''_k\\
        \bv_{k+1} &= e^{-\gamma h}\bv_k - {\gamma} h
        e^{- \gamma h(1 -U_k)} \nabla f(\bvartheta_{k+U}) 
        +  \sqrt{2h}\,\bxi'''_k.
    \end{align}
\end{enumerate}
The RKLMC algorithm fundamentally relies on sequential gradient evaluations of $f$, with each iteration requiring two evaluations that must be computed one after the other. Shen and Lee \cite{shen2019randomized} also developed a modified version, termed pRKLMC, which overcomes this sequential constraint through a parallel implementation. For reference, we present the complete pRKLMC algorithm in Algorithm~\ref{RKLMCp}, where we suppress the superscript $\sf pRKLMC$ for notational simplicity. 
\begin{algorithm}[ht]
\small
\caption{Parallelized  RKLMC}\label{RKLMCp}
\noindent\textbf{Input}: number of parallel steps $R$, 
number of sequential iterations $Q$, step size $h$, 
friction coefficient~$\gamma$, number of iterations $n$, 
initial points~$\bv_0$ and $\bvartheta_0.$\\
\textbf{Output}: iterates $\bvartheta^{}_{n+1}$ and 
$\bv_{n+1}$
\begin{algorithmic}[1]
\setstretch{1.20}
\For{\texttt{$k=1$ to $n$}}
   \State Draw $U_{k1},\dots,U_{kR}$ uniformly from 
		$\big[0,\frac{1}{R}\big],\dots,\big[\frac{R-1}{R}, 
		1\big],$ respectively.
   \State Generate  $\bxi_{k}=\sqrt{2}\int_0^{h}\big(1 
	- e^{-\gamma (h-s)}\big)d\overline{\bW}_{s}$, for
        $\overline{\bW}_s = \bW_{kh+s}-\bW_{kh}$.
   \State Generate  $\overline{\bxi}_{k} = \sqrt{2} \int_0^{h} 
        e^{-\gamma (h-s)}d\overline{\bW}_{s}$.
   \State Generate  $\bxi_{kr}=\sqrt{2}\int_0^{h}\big(1 
	- e^{\gamma U_{kr}(s-h)}\big)d\overline{\bW}_{s U_{kr}}$, for $r=1,\dots,R$. 
   \State Set $\bvartheta_{k}^{(0,r)}=\bvartheta_k, r=1,\dots,R$.
   \For{\texttt{$q=1$~to~$Q-1$}}
   \For{\texttt{$r=1$ to $R$} in parallel} \hspace{5cm} $\tikzmark{listing-4-end}$

    \State
    $a_{kr}=({1-e^{-\gamma hU_{kr}}})/{\gamma}.$
    \State
    $b_{kj}=\int_{{(j-1)h}/{R}}^{(h/R)\min({j},RU_{kr})} (1-e^{-\gamma(U_{kr}h-s)})ds, j=1,\dots,r.$
     \State  $\bvartheta_{k}^{(q,r)}=\bvartheta_k
     + a_{kr}\bv_k-\sum_{j=1}^r b_{kj}\grad f\big(\bvartheta_{k}^{(q-1,j)}\big) + \bxi_{kr}.$
     
     \EndFor $\tikzmark{listing-7-end}$
    \EndFor		
    \State  $ \bvartheta_{k+1}=\bvartheta_k 
    +\frac{1-e^{-\gamma h}}{\gamma}\bv_k
    -\sum_{r=1}^R  \frac{h}{R}(1-e^{-\gamma h(1-U_{kr})})
		\grad f\big(\bvartheta_{k}^{(Q-1,r)}\big)
    + {\bxi_k}$.
    \State  $\bv_{k+1}=e^{-\gamma h}\bv_k- \gamma\sum_{r=1}^R
		\frac{h}{R}e^{-\gamma h(1-U_{kr})}\grad f\big(\bvartheta_{k}^{(Q-1,r)}\big)
    + \gamma\overline{\bxi}_k$.
\EndFor	
\end{algorithmic}
\AddNote{listing-4-end}{listing-7-end}{listing-4-end}{In parallel}
\end{algorithm}

\normalsize

The following theorem establishes an upper bound on the $\wass_2$ distance between the sampling distribution generated by the pRKLMC algorithm and the target distribution. 

\begin{theorem}
\label{thm:rklmc}
Let $f:\RR^p\to \RR$ be a function satisfying  
Assumption~\ref{asm:A-scgl} and let $\btheta^*$ be a minimizer
of $f$. 
Choose
the parameters $(\gamma,R,Q,h)$ so that $\gamma \geqslant 5M$,
and  $\kappa\big(\bar h^6/R^3+ \bar h^{4Q-2}\big)
\leqslant 10^{-6},$ where $\bar h = \gamma h$. 
Assume that $\bvartheta_0$ is independent of $\bv_0$ and that
$\bv_0\sim\mathcal N_p(0,\gamma\bfI_p)$. Then, for any 
$n \geqslant 1$, the distribution $\nu_n=\nu_n^{\textup{\sf 
pRKLMC}}$ of $\bvartheta_n^{\textup{\sf pRKLMC}}$ satisfies
\begin{align}
    \wass_2(\nu_n,\pi) &
    \leqslant 
    3.04e^{-mnh}\wass_2(\nu_0,\pi) + 1.1\Big({\frac{e^{-mnh}}{m} \E[f(\bvartheta_0) - f(\btheta^*)] }\Big)^{1/2}\\
    &\qquad  + 80.11 \Big(\frac{\bar h^3}{R^2}+\bar h^{2Q-1} 
    \Big)^{1/2}\sqrt{\frac{p}{m}} + 4.33 \Big(\frac{\bar h^6}{R^3} 
    + \bar h^{4Q-2} \Big)^{1/2}\sqrt{\frac{\kappa p}{m}} \,.
\end{align}
\end{theorem}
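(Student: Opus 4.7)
The proof is based on a synchronous coupling of the pRKLMC chain with a stationary kinetic Langevin diffusion, combined with a twisted-norm one-step analysis in the spirit of \cite{cheng2018underdamped,eberle2019,shen2019randomized}, augmented by a new contraction analysis of the $Q$-round parallel inner loop. Let $(\bL^{\sf KLD}_t,\bV^{\sf KLD}_t)$ solve \eqref{KLD:2} driven by the same Brownian motion $\bW$ used in Algorithm~\ref{RKLMCp}, with initial condition an optimal $\wass_2$-coupling of $\nu_0\otimes\mathcal N_p(\mathbf 0_p,\gamma\bfI_p)$ with $p_*$. Since $\bv_0$ is already the velocity marginal of $p_*$, is independent of $\bvartheta_0$, and under $p_*$ the position and velocity are independent, the velocity can be matched at zero cost and $\E\|\bvartheta_0-\bL^{\sf KLD}_0\|^2=\wass_2^2(\nu_0,\pi)$. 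Because $(\bL^{\sf KLD}_{nh},\bV^{\sf KLD}_{nh})\sim p_*$ for every $n$, it suffices to control $\bigl(\E\|\bvartheta_n-\bL^{\sf KLD}_{nh}\|^2\bigr)^{1/2}$.

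Writing the coupling error as $\boldsymbol D_k=\bvartheta_k-\bL^{\sf KLD}_{kh}$ and $\boldsymbol E_k=\bv_k-\bV^{\sf KLD}_{kh}$, I would derive from the updates of $\bvartheta_{k+1}$ and $\bv_{k+1}$ in Algorithm~\ref{RKLMCp} a recursion of the form
\[
(\boldsymbol D_{k+1},\gamma^{-1}\boldsymbol E_{k+1})^\top = \bfA_h\,(\boldsymbol D_k,\gamma^{-1}\boldsymbol E_k)^\top+\bfB_h\,\boldsymbol R_k+\mathbf S_k,
\]
where $\bfA_h$ linearises the kinetic flow, $\mathbf S_k$ is the zero-mean stratified midpoint noise, and $\boldsymbol R_k$ collects the inner-loop gradient mismatches $\nabla f(\bvartheta_k^{(Q-1,r)})-\nabla f(\bL^{\sf KLD}_{kh+hU_{kr}})$, $r=1,\dots,R$. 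Using the twisted norm $\|(\bx,\by)\|_\star^2=\|\bx\|^2+2\bx^\top\by+2\|\by\|^2$ and exploiting $m\bfI_p\preccurlyeq\nabla^2 f\preccurlyeq M\bfI_p$ with $\gamma\geqslant 5M$, I would verify the deterministic contraction $\|\bfA_h\|_\star\leqslant 1-mh+O((mh)^2)$. Conditionally on $\mathcal F_{kh}$, $\mathbf S_k$ is a centred stratified Riemann-sum error for $\int_0^h e^{-\gamma(h-s)}\nabla f(\bL^{\sf KLD}_{kh+s})\,\rmd s$; a direct variance computation, together with the stationary bound $\E\|\nabla f(\bL^{\sf KLD}_t)\|^2\lesssim Mp$ and the inequality $\|\nabla f(\btheta)\|^2\leqslant 2M(f(\btheta)-f(\btheta^*))$ used to handle the transient phase, yields $\E\|\mathbf S_k\|_\star^2\lesssim (\bar h^3/R^2)(p/m)$ and simultaneously produces the first discretisation term and the $\E[f(\bvartheta_0)-f(\btheta^*)]$ initialisation term of the statement.

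The main obstacle, and the only substantial departure from the sequential RKLMC analysis of \cite{shen2019randomized,yu2023langevin}, is bounding $\boldsymbol R_k$ through the $Q$-round parallel inner loop. Setting $e_k^{(q)}=\max_{r\leqslant R}\E\|\bvartheta_k^{(q,r)}-\bL^{\sf KLD}_{kh+hU_{kr}}\|^2$, the update $\bvartheta_k^{(q,r)}=\bvartheta_k+a_{kr}\bv_k-\sum_{j=1}^r b_{kj}\nabla f(\bvartheta_k^{(q-1,j)})+\bxi_{kr}$, combined with $\|\nabla^2 f\|\leqslant M$ and the coefficient bounds $|a_{kr}|\leqslant h$, $\sum_j|b_{kj}|\lesssim h$, yields a Picard-type inequality $e_k^{(q)}\leqslant c_1\bar h^2\,e_k^{(q-1)}+c_2\,\E\|(\boldsymbol D_k,\gamma^{-1}\boldsymbol E_k)\|_\star^2+c_3(\bar h^3/R^2)(p/m)$. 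Starting from $e_k^{(0)}=\E\|\boldsymbol D_k\|^2$ and iterating $Q-1$ times gives $e_k^{(Q-1)}\lesssim \bar h^{2Q-2}\,\E\|(\boldsymbol D_k,\gamma^{-1}\boldsymbol E_k)\|_\star^2+\bar h^{2Q-1}(p/m)$. The first summand, of order $\bar h^{2Q-2}\leqslant 10^{-6}/\kappa$ by hypothesis, is absorbed back into the contractive step, while the second contributes the $\bar h^{2Q-1}$ bias term and, through the $\gamma$-prefactor of $\nabla f$ in the velocity update that amplifies $\boldsymbol R_k$ by a factor $\sqrt\kappa$, the $\kappa$-weighted $\bar h^{4Q-2}$ term.

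Finally, combining the twisted contraction with Minkowski's inequality and summing the geometric series with ratio $1-mh+O((mh)^2)$ produces $\sqrt{\E\|(\boldsymbol D_n,\gamma^{-1}\boldsymbol E_n)\|_\star^2}\leqslant e^{-mnh}(1+o(1))\sqrt{\E\|(\boldsymbol D_0,\gamma^{-1}\boldsymbol E_0)\|_\star^2}+(mh)^{-1}\sup_k\sqrt{\E\|\bfB_h\boldsymbol R_k+\mathbf S_k\|_\star^2}$. Plugging in the variance bound and the inner-loop estimate, using $\wass_2(\nu_n,\pi)^2\leqslant\E\|\boldsymbol D_n\|^2\leqslant C\,\E\|(\boldsymbol D_n,\gamma^{-1}\boldsymbol E_n)\|_\star^2$, and carefully tracking the absolute constants yields the theorem. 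The most delicate bookkeeping, and what I expect to consume most of the appendix, is verifying that the constants in the Picard contraction for $e_k^{(q)}$ remain small enough that the $\bar h^{2Q-2}$ factor produced by the inner loop really is absorbed under the hypothesis $\kappa(\bar h^6/R^3+\bar h^{4Q-2})\leqslant 10^{-6}$, and that the various $\sqrt{p/m}$ and $\sqrt{\kappa p/m}$ prefactors line up with the stated constants $80.11$ and $4.33$.
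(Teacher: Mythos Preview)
Your high-level plan---synchronous coupling, twisted norm, Picard-type control of the $Q$-round inner loop---is the right picture, but two structural choices depart from the paper and, as written, create a genuine gap.

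\textbf{The inner-loop comparison target.} You run the Picard argument by comparing $\bvartheta_k^{(q,r)}$ directly with the \emph{stationary} process $\bL^{\sf KLD}_{kh+hU_{kr}}$. But the update for $\bvartheta_k^{(q,r)}$ starts from $(\bvartheta_k,\bv_k)$, while $\bL^{\sf KLD}_{kh+hU_{kr}}$ evolves from $(\bL^{\sf KLD}_{kh},\bV^{\sf KLD}_{kh})$, so at every level $q$ the difference $\bvartheta_k^{(q,r)}-\bL^{\sf KLD}_{kh+hU_{kr}}$ carries the contribution $\boldsymbol D_k+a_{kr}\boldsymbol E_k$, which does \emph{not} shrink with $q$. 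Hence the conclusion $e_k^{(Q-1)}\lesssim\bar h^{2Q-2}\E\|(\boldsymbol D_k,\gamma^{-1}\boldsymbol E_k)\|_\star^2+\bar h^{2Q-1}(p/m)$ is not what the recursion yields; the coupling error enters with an $O(1)$ coefficient, and the resulting feedback $O(Mh)\,x_k$ into the one-step bound swamps the $O(mh)$ contraction. The paper handles this by introducing an \emph{auxiliary} kinetic diffusion $(\bL'_t,\bV'_t)$ started from $(\bvartheta_k,\bv_k)$ with the same noise; the inner loop is compared with $\bL'_{hU_{kr}}$, so the coupling error never enters and the Picard recursion (Lemma~9 in the appendix) gives errors depending only on $\|\bv_k\|_{\Ltwo}$ and $\|\bg_k\|_{\Ltwo}$. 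The contraction factor $e^{-mh}$ then comes separately, from the known $\wass_2$-contraction of two synchronously coupled kinetic diffusions.

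\textbf{The missing Lyapunov step.} Once you compare with $\bL'$, the per-step error depends on $\|\bv_k\|_{\Ltwo}$ and $\|\bg_k\|_{\Ltwo}$, which are not a priori bounded along the algorithm. Your sketch tries to cover this with the stationary moment $\E\|\nabla f(\bL^{\sf KLD}_t)\|^2\lesssim Mp$ and the PL inequality, but neither bounds the algorithm's own iterates. The paper devotes a substantial block (Lemma~8 and Proposition~4) to one-step inequalities for $\|\bv_{n+1}\|_{\Ltwo}^2$, $\E[\bv_{n+1}^\top\bg_{n+1}]$ and $\gamma\E[f_{n+1}-f_n]$, which are combined into bounds on the discounted sums $\sum_k\varrho^{n-k}\|\bv_k\|_{\Ltwo}^2$ and $\sum_k\varrho^{n-k}\|\bg_k\|_{\Ltwo}^2$; these bounds depend on $x_n$ itself, producing a self-referential inequality that is closed under the hypothesis $\kappa(\bar h^6/R^3+\bar h^{4Q-2})\leqslant 10^{-6}$. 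This Lyapunov argument is precisely what generates the $\big(e^{-mnh}\E[f(\bvartheta_0)-f(\btheta^*)]/m\big)^{1/2}$ term in the statement---it does not arise from a variance computation on $\mathbf S_k$.

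A secondary point: the two error scales $\bar h^3/R^2$ and $\kappa\,\bar h^6/R^3$ in the theorem come from splitting the one-step error into $\bar\bvartheta_{k+1}-\bL'_h$ (the $U$-conditional mean, summed linearly with an extra Cauchy--Schwarz factor $\sqrt{\kappa/\eta}$) and $\bvartheta_{k+1}-\bar\bvartheta_{k+1}$ (the $U$-fluctuation, summed in squares). Your decomposition into $\boldsymbol R_k$ and a zero-mean $\mathbf S_k$ is not the same split and, without it, you will not recover both rates with their stated $\kappa$-dependence.
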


To verify consistency, setting $Q=2$ and $R=1$ in this result recovers the convergence rate for RKLMC established in~\cite[Theorem 2]{yu2023langevin}. Furthermore, in the theoretical limit where $R$ approaches infinity—--corresponding to the ability to compute an infinite number of parallel gradient evaluations per unit time—--the pRKLMC error exhibits exponential decay at a rate proportional to $n\wedge Q$.

Theorem \ref{thm:rklmc} establishes the strongest known convergence rate for pRKLMC when sampling from a strongly log-concave distribution with a gradient Lipschitz potential. Our analysis reveals several key advances beyond the seminal work of Shen and Lee \cite{shen2019randomized}, where pRKLMC was first introduced. While~\cite{shen2019randomized} focused on determining the number of iterations needed to achieve an $\varepsilon$-bounded error, 
our work provides a comprehensive upper bound on the sampling error at any iteration. We accomplish this by extending the proof techniques from \cite{yu2023langevin} to address the challenges of the parallel setting. For a detailed comparison between our proof of \Cref{thm:rklmc} and its sequential counterpart in \cite{yu2023langevin}, see~\ref{app:proof-rklmc}. Notably, our bound in \Cref{thm:rklmc} features explicit, small constants and eliminates the need to initialize the algorithm at the potential's minimizer. These improvements may prove valuable for future extensions to non-convex potentials.

Moreover, our results highlight the trade-off between the number of sequential steps, $nQ$, and the parallel evaluations of the potential's gradient at each step, $R$. This trade-off becomes particularly significant when considering the choice proposed by~\cite{shen2019randomized} of $R$ proportional to $\ceil{ \sqrt{\kappa}/ \epsilon}$, which may prove impractical under either one of the following two conditions: first, when high-precision requirements demand a very small $\epsilon$, and second, when the potential function exhibits severe ill-conditioning. Let us also mention right away that our result shows that it
suffices to choose $R$ of a lower order than $\sqrt{\kappa}/\varepsilon$ 
to achieve the desired precision. Indeed, we will show in \Cref{cor:2} that
$R = \lceil (10/\varepsilon) + (\sqrt{\kappa}/\varepsilon)^{2/3}\rceil$ 
parallel evaluations of the gradient are sufficient to achieve this goal.

The computational challenges extend beyond parallel processing capabilities. Substantial storage constraints emerge even with sufficient parallel processing units to execute $\ceil{\sqrt{\kappa}/\epsilon}$ operations simultaneously. The pRKLMC algorithm requires maintaining $R$ vectors of dimension $p$ at each iteration. When both the dimensionality $p$ and the scaling factor $\sqrt{\kappa}/\epsilon$ are large, the memory requirements can quickly become prohibitive, potentially rendering the approach computationally infeasible. 
We further elaborate on this point in the next section.

As this manuscript approached its final stages, the paper by Anari et al.~\citep{anari2024fast} was posted on arXiv. The results presented in our work were independently derived and nearly concurrently with those in \citep{anari2024fast}, without prior knowledge of its findings. While both papers aim to accelerate Langevin sampling by parallelizing certain operations, they offer different contributions, as we elaborate
below.
The main strengths of \citep{anari2024fast} compared to our results are:
\begin{itemize}\itemsep=0pt
\item[\textbf{a)}] A weaker condition on the target density: our requirement of $m$-strong convexity of $f$ is replaced in \citep{anari2024fast} by the log-Sobolev inequality with constant $m$.
\item[\textbf{b)}] Capability to handle approximate evaluations of the potential's gradient.
\item[\textbf{c)}] In the case of vanilla Langevin diffusion, error bounds 
are established using three distinct metrics: the Kullback-Leibler divergence, total-variation and Wasserstein distances.
\end{itemize}
Regarding these strengths, several observations can be made:
\textbf{a)} Extending our proof to target densities beyond log-concave distributions that satisfy the log-Sobolev inequality appears to be very challenging, 
\textbf{b)} While extending our results to inexact gradient evaluations seems feasible, such an extension would necessitate tedious computational refinements, and 
\textbf{c)} Deriving bounds on the Kullback-Leibler and total variation distances for randomized midpoint methods that improve upon their vanilla counterparts remains a highly 
non-trivial open problem.

Conversely, our primary contribution compared to \citep{anari2024fast} lies in deriving Wasserstein distance upper bounds with significantly more favorable dependencies. Specifically, our bounds demonstrate improved scaling 
with respect to
\vspace*{-7pt}
\begin{itemize}\itemsep=-1pt
    \item[--]  the step size $h$ and the condition number 
    in the case of LMC,
    \item[--] the initialization error and the number of 
    iterations count, in the case of KLMC.
\end{itemize}

\vspace*{-9pt}
A more concrete and quantitative comparison is presented in
the next section.

We also note that \cite{anari2024fast} primarily relies on bounding the Kullback-Leibler error and characterizing the Langevin process distribution dynamics as a solution to a gradient flow problem in the space of measures, following the approach established in \citep{vempala2019rapid,durmus2019analysis}. It remains unclear how these proof techniques can be extended to exploit the potential advantages of randomized midpoint discretization. As a consequence, the algorithms investigated in \citep{anari2024fast}—which we will henceforth refer to as pLMC and pKLMC—do not incorporate the randomized midpoint step, in contrast to the pRLMC and pRKLMC algorithms analyzed in our study.

\section{Discussion}
\label{sec:comparison}

Our main theorems, stated in the preceding sections, address the question posed in the introduction: What computational resources are necessary to ensure that the generated random vector's distribution falls within a prescribed distance of the target distribution? In this section, we explore these insights under two distinct computational paradigms: first, in scenarios with an unlimited number of parallel processing units, and second, in settings constrained by a limited number of parallel units. We then discuss how these insights compare
to the results recently obtained in \citep{anari2024fast}.

\subsection{Consequences in the setting of an unlimited number of parallel units}

Consider a computational scenario where, for any given $R \in \mathbb{N}$, we can construct a device capable of performing $R$ parallel evaluations of the gradient of $f$ within a single unit of time. In this context, \Cref{thm:rlmc,thm:rklmc} guide for selecting $R$ to ensure that the upper bounds on sampling error are below the prescribed threshold $\varepsilon\sqrt{p/m}$. The corresponding results, obtained as direct consequences of \Cref{thm:rlmc,thm:rklmc}, respectively, read as follows. 

\begin{corollary}[Mixing time for the vanilla Langevin with parallelization and randomized midpoint discretization]\label{cor:1}
Let $\varepsilon \in (0,1)$ be fixed. Set 
\[
    R = \left\lceil {0.28(1+\varepsilon \sqrt{\kappa})/\varepsilon^2}
    \right\rceil, 
    \quad 
    Q = \left\lceil 2.2 + 0.7\log(\sqrt{\kappa}/\varepsilon)\right\rceil,
\]
and choose the step size $h > 0$ and the number of outer
iterations $n \in \mathbb{N}$ such that
\[
    Mh = 0.1
    \quad \text{and} \quad
    n \geqslant 10\kappa \Big\{\log\left({7}/{\varepsilon}\right) + \log\Big(\frac{m}{p}\wass_2^2(\nu_0,\pi)\Big)\Big\}.
\]
Then\footnote{This follows from the fact that $10\kappa = 2\kappa/(Mh) = 2/mh$.}, after $nQ$ iterations, each involving $R$ parallel queries of the gradient information $\nabla f$, 
the parallelized version of the randomized midpoint Langevin Monte-Carlo algorithm satisfies
\[
    \wass_2(\nu_n^{\sf pRLMC}, \pi) \leqslant \varepsilon\sqrt{{p}/{m}}.
\]
\end{corollary}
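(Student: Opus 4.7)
The plan is that this corollary is essentially a direct computation: we take the bound from \Cref{thm:rlmc}, substitute the proposed choices $(h,R,Q,n)$, and check that (i) the standing condition $\bar h^{Q}+\bar h/R+(\bar h^{Q-1}+\bar h/R^{3/2})\sqrt{\kappa \bar h}\leqslant 0.1$ of \Cref{thm:rlmc} holds, and (ii) each summand in the RHS of \eqref{ineq:RLMC} is bounded by a controlled fraction of $\varepsilon\sqrt{p/m}$. Since $\bar h=Mh=0.1$ by construction, the verification reduces to examining how the choices of $R$ and $Q$ interact with the relevant powers of $\bar h$, $\kappa$, and $\varepsilon$.

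For the precondition, I would observe that $\bar h/R\leqslant 0.1$ is automatic, that $\bar h^{Q}\leqslant 10^{-2}$ since $Q\geqslant 2$, and that the remaining two terms are the substantive ones. The term $\bar h^{Q-1}\sqrt{\kappa\bar h}=10^{-Q+1/2}\sqrt{\kappa}$ is where the specific form $Q=\lceil 2.2+0.7\log(\sqrt{\kappa}/\varepsilon)\rceil$ enters: substituting this choice, one checks that the exponent $-Q+1/2$ is negative enough to absorb $\sqrt{\kappa}$ up to a small constant times a power of $\varepsilon$. The term $(\bar h/R^{3/2})\sqrt{\kappa\bar h}=10^{-3/2}\sqrt{\kappa}/R^{3/2}$ is handled by invoking the second summand in $R\geqslant 0.28(1+\varepsilon\sqrt{\kappa})/\varepsilon^{2}\geqslant 0.28\sqrt{\kappa}/\varepsilon$, which makes $R^{3/2}$ large enough relative to $\sqrt{\kappa}$.

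For the sampling-error bound, I would split the RHS of \eqref{ineq:RLMC} into the exponential-decay part and the stochastic part, and handle them in turn. Since $mh=Mh/\kappa=0.1/\kappa$, the condition on $n$ gives $mnh\geqslant \log(7/\varepsilon)+\log\bigl((m/p)\wass_2^{2}(\nu_{0},\pi)\bigr)$, and a direct computation bounds $1.03\,e^{-mnh/2}\wass_2(\nu_0,\pi)$ by a (small) multiple of $\varepsilon\sqrt{p/m}$. For the stochastic part, I would bound the four summands separately: $\bar h/\sqrt R\leqslant 0.1\varepsilon/\sqrt{0.28}$ using the first summand in the definition of $R$; $(\bar h/R)\sqrt{\kappa\bar h}$ using the second summand of $R$; and the two terms involving $\bar h^{Q}$ and $\bar h^{Q-1}\sqrt{\kappa\bar h}$ using the logarithmic choice of $Q$ (same computation as for the precondition, with small constants to spare).

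I expect the main obstacle to be purely bookkeeping: the four error terms, the exponential decay term, and the precondition share the same three parameters $R,Q,h$ through overlapping expressions, and one must allocate the slack in each inequality so that the total does not exceed $\varepsilon\sqrt{p/m}$. No conceptually new argument is needed beyond \Cref{thm:rlmc} itself.
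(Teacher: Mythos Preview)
Your proposal is correct and matches the paper's approach: the paper simply states that \Cref{cor:1} is a ``direct consequence'' of \Cref{thm:rlmc} and gives no further argument, so the only possible proof is exactly the substitution-and-verification you describe. Your breakdown into (i) checking the precondition of \Cref{thm:rlmc} and (ii) bounding the exponential and stochastic terms of \eqref{ineq:RLMC} separately, together with the identification of which summand in the definitions of $R$ and $Q$ controls which error term, is the right organization of that bookkeeping.
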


\begin{corollary}[Mixing time for the kinetic Langevin with 
parallelization and randomized midpoint discretization]\label{cor:2}
    Let $\varepsilon\in(0,1)$ be fixed. Set $\gamma = 5M$, $R =
    \ceil{10\varepsilon^{-1} + \kappa^{1/3} 
    \varepsilon^{-2/3}}$, $Q = 5 + \ceil{0.7\log (\sqrt{\kappa} 
    /\varepsilon)}$, $\bvartheta_0 = \btheta^*$ and choose 
    the step size $h>0$ and the number of outer iterations 
    $n\in\mathbb N$ such that 
    \begin{align}
     \gamma h = 0.1,
    \quad
    \text{and} 
    \quad
        n\geqslant 25 \kappa\Big\{ \log(22/\varepsilon)+
        \log\Big(\frac{m}{p}\wass_2^2(\nu_0,\pi)\Big)\Big\}
        ,
    \end{align}
    then
    we have $\wass_2(\nu_n^{\sf pRKLMC}, 
    \pi) \leqslant \varepsilon\sqrt{p/m}$ after $nQ$ iterations, 
    each of which involves $R$ parallel queries of the gradient 
    information $\grad f$.
\end{corollary}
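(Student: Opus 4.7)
My plan is to derive the corollary as a direct consequence of \Cref{thm:rklmc}, by verifying that the prescribed parameters satisfy its hypotheses and then controlling each summand in the theorem's upper bound.

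First, I would check admissibility. The choice $\gamma = 5M$ trivially satisfies $\gamma \geqslant 5M$. With $\bar h = \gamma h = 0.1$, the remaining condition $\kappa(\bar h^6/R^3 + \bar h^{4Q-2}) \leqslant 10^{-6}$ decomposes into two independent bounds: the choice $R \geqslant \kappa^{1/3}\varepsilon^{-2/3}$ gives $R^3 \geqslant \kappa/\varepsilon^2$, so $\kappa\bar h^6/R^3 \leqslant 10^{-6}\varepsilon^2 \leqslant 10^{-6}$; and $Q \geqslant 5 + \lceil 0.7\log(\sqrt\kappa/\varepsilon)\rceil$ combined with $\bar h = 0.1$ yields $\kappa\bar h^{4Q-2} \leqslant 10^{-6}$ via the elementary identity $0.1^{2.8\log x} = x^{-2.8\log 10} \leqslant x^{-6}$ applied with $x = \sqrt\kappa/\varepsilon$, together with $0.1^{18}\leqslant 10^{-6}$.

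Next, I would apply \Cref{thm:rklmc}. Since $\bvartheta_0 = \btheta^*$ is deterministic, the summand $1.1(e^{-mnh}\E[f(\bvartheta_0)-f(\btheta^*)]/m)^{1/2}$ vanishes, leaving three terms. For the initialization term $3.04\,e^{-mnh}\wass_2(\nu_0,\pi)$, the identities $\gamma h = 0.1$ and $\gamma = 5M$ give $mh = 1/(50\kappa)$, so the hypothesis on $n$ translates into a lower bound on $mnh$ of the form $\tfrac12\log(22/\varepsilon)+\tfrac12\log(m\wass_2^2(\nu_0,\pi)/p)$, which I would use to bound this summand by a prescribed fraction of $\varepsilon\sqrt{p/m}$. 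For the two bias terms, the choice $R$ of order at least $1/\varepsilon$ ensures $\bar h^3/R^2 \leqslant 10^{-5}\varepsilon^2$, and the logarithmic scaling of $Q$ makes $\bar h^{2Q-1}$ negligible by comparison; similarly, $R \geqslant \kappa^{1/3}/\varepsilon^{2/3}$ makes $\bar h^6/R^3 \leqslant 10^{-6}\varepsilon^2/\kappa$, which after multiplication by the $\sqrt\kappa$ factor outside yields an $O(\varepsilon)\sqrt{p/m}$ contribution, and $\bar h^{4Q-2}$ is again negligible by the $Q$ choice.

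The proof is essentially a routine calculation once the pieces above are assembled. The step most likely to require care is the initialization term, where the constants $3.04$ inside the theorem and $22$ and $25$ inside the condition on $n$ must be balanced so that the decay of $e^{-mnh}$ absorbs both the factor $\wass_2(\nu_0,\pi)$ and the normalization $1/\sqrt{p/m}$; the precise constants $10$, $5$, and $0.7$ appearing in the definitions of $R$ and $Q$ are engineered to fit together so that the four summands add up to at most $\varepsilon\sqrt{p/m}$.
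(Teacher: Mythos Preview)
Your approach is correct and matches the paper's: the corollary is stated as a direct consequence of \Cref{thm:rklmc}, and the derivation consists precisely of verifying the admissibility condition $\kappa(\bar h^6/R^3+\bar h^{4Q-2})\leqslant 10^{-6}$, using $\bvartheta_0=\btheta^*$ to eliminate the $f(\bvartheta_0)-f(\btheta^*)$ term, and bounding each remaining summand via the prescribed choices of $R$, $Q$, $h$, and $n$. Your identification of the initialization term as the place requiring the most care with constants is apt.
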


These corollaries show that the number of sequential steps, $nQ$, 
sufficient to achieve an error bounded by $\varepsilon 
\sqrt{p/m}$ is of order $\kappa \log(1/\varepsilon)
\log(\kappa/\varepsilon)$ both for the vanilla and the kinetic
versions of the algorithms. However, the number of gradient 
evaluations carried out at each iteration is much lower in the
case of the kinetic Langevin diffusion based algorithm: 
$\varepsilon^{-1} + (\sqrt{\kappa}/\varepsilon)^{2/3}$
versus $\varepsilon^{-2} + \sqrt{\kappa}/\varepsilon$. 
It should be noted that this rate is better than the rates
of $R$ obtained in the original work \cite{shen2019randomized} 
and in the concurrent paper \cite{anari2024fast}. Indeed, both
papers obtain $R$ of order of $\sqrt{\kappa}/\varepsilon$. 
Therefore, our results improves the rate of $R$ by a factor
of the order $\kappa^{-1/2} + (\varepsilon\kappa^{-1/2})^{1/3}$.

\subsection{Consequences in the setting of a limited 
number of parallel units}

Let us now examine a more practical scenario where we have a 
fixed number $R$ of gradient evaluations that can be performed 
in parallel. The following result builds on \Cref{thm:rlmc} and 
\Cref{thm:rklmc}, simplifying their claims by omitting 
numerical constants. This makes the results both easier to 
interpret and more readily comparable with existing literature.
Thus, we use the notation \(a_n \lesssim b_n\) to indicate that 
the ratio \(a_n / b_n\) is bounded above by a universal constant. 
Similarly, \(a_n \gtrsim b_n\) is equivalent to \(b_n \lesssim a_n\), 
and \(a_n \asymp b_n\) indicates that both \(a_n \lesssim b_n\) 
and \(b_n \lesssim a_n\) hold.

\begin{corollary}
\label{cor:3} 
Let $f$ be $m$-strongly convex and $M$-gradient Lipschitz. Set 
$\bar h = Mh$ and assume that the initial value $\bvartheta_0$ 
is equal to the minimizer of $f$, then 
\begin{align}
    \wass_2^2(\nu_n^{\sf pRLMC},\pi) &\lesssim \bigg(e^{-n\bar h/\kappa} 
	  + \frac{\bar h^3 \kappa}{R^2}
	 +\frac{\bar h^2}{R}\bigg)\cdot\frac{p}{m}\,
        \label{up:pRLMC}\\
    \wass_2^2(\nu_n^{\sf pRKLMC},\pi) 
	& \lesssim \bigg(e^{-n\bar h/\kappa} + 
        \frac{\bar h^6 \kappa}{R^3} + \frac{\bar 
        h^3}{R^2}\bigg)\cdot\frac{p}{m}\,,\label{up:pRKLMC}
\end{align}
provided that $\bar h = Mh\lesssim 1$, $Q\gtrsim 1+ \log R$ and 
$\gamma = 5M$. Therefore, to ensure that these upper bounds remain below $\varepsilon^2{p/m}$, we must select $n$ as follows
\begin{align}
  \text{\sf pRLMC}&: n\asymp \kappa\log(1/\varepsilon)\Big(1 +   
    \Big\{\frac{\kappa}{R^2\varepsilon^2}\Big\}^{1/3} + \Big\{\frac{1}{R\varepsilon^{2}}\Big\}^{1/2}\Big)\,,\label{n:pRLMC}\\
    \text{\sf pRKLMC}&: n\asymp \kappa\log(1/\varepsilon)\Big(1 +   
    \Big\{\frac{\kappa}{R^3\varepsilon^2}\Big\}^{1/6}
    +\Big\{\frac{1}{R\varepsilon}\Big\}^{2/3}\Big)\,.\label{n:pRKLMC}
\end{align}
\end{corollary}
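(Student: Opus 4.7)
The plan is to specialise \Cref{thm:rlmc} and \Cref{thm:rklmc} to the initialisation $\bvartheta_0=\btheta^*$, then square both bounds, absorb the high-order $\bar h^Q$ contributions using the hypothesis $Q\gtrsim 1+\log R$, and finally invert for $n$ to obtain the mixing-time estimates. The only ingredient not already provided by the two theorems is the standard variance bound $\wass_2^2(\delta_{\btheta^*},\pi)\lesssim p/m$ for $m$-strongly log-concave targets (Brascamp--Lieb); the same choice of $\bvartheta_0$ makes the initial-energy term $\E[f(\bvartheta_0)-f(\btheta^*)]$ appearing in \Cref{thm:rklmc} vanish identically.

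For pRLMC, squaring \eqref{ineq:RLMC} via $(\sum_i a_i)^2\lesssim\sum_i a_i^2$ and using $mh=\bar h/\kappa$ yields
\eqn{
\wass_2^2(\nu_n^{\sf pRLMC},\pi)\lesssim\Big(e^{-n\bar h/\kappa}+\bar h^{2Q}+\frac{\bar h^2}{R}+\kappa\bar h^{2Q-1}+\frac{\kappa\bar h^3}{R^2}\Big)\frac{p}{m}.
}
The hypothesis $Q\gtrsim 1+\log R$, combined with $\bar h$ a sufficiently small absolute constant so that $\log(1/\bar h)$ is bounded below, guarantees $\bar h^{Q-2}\lesssim 1/R$; this is exactly what is needed to dominate $\bar h^{2Q}$ by $\bar h^2/R$ and $\kappa\bar h^{2Q-1}$ by $\kappa\bar h^3/R^2$, producing \eqref{up:pRLMC}. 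The same template applied to \Cref{thm:rklmc} (with $\gamma=5M$, so that the $\bar h=\gamma h$ appearing there is, up to a universal factor, the $\bar h=Mh$ of the corollary and $mh\asymp\bar h/\kappa$) absorbs the $\bar h^{2Q-1}$ and $\kappa\bar h^{4Q-2}$ contributions into $\bar h^3/R^2$ and $\kappa\bar h^6/R^3$, giving \eqref{up:pRKLMC}.

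To deduce \eqref{n:pRLMC} and \eqref{n:pRKLMC}, I impose that each summand inside the bracketed expression of \eqref{up:pRLMC}, respectively \eqref{up:pRKLMC}, be at most $\varepsilon^2$. The exponential term forces $n\gtrsim(\kappa/\bar h)\log(1/\varepsilon)$, while the polynomial-in-$\bar h$ summands invert into the lower bounds $1/\bar h\gtrsim\{\kappa/(R^2\varepsilon^2)\}^{1/3}$ and $1/\bar h\gtrsim\{1/(R\varepsilon^2)\}^{1/2}$ for pRLMC, and $1/\bar h\gtrsim\{\kappa/(R^3\varepsilon^2)\}^{1/6}$ and $1/\bar h\gtrsim\{1/(R\varepsilon)\}^{2/3}$ for pRKLMC. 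Adding a $1$ to encode the a priori constraint $\bar h\lesssim 1$ and substituting into $n\gtrsim\kappa(1/\bar h)\log(1/\varepsilon)$ yields the stated expressions.

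The main subtlety lies in the absorption step: one must read $Q\gtrsim 1+\log R$ with a sufficiently large multiplicative constant and $\bar h\lesssim 1$ with a sufficiently small absolute constant, so that $\bar h^{Q-2}\lesssim 1/R$ holds. Fortunately, this same smallness of $\bar h$ automatically ensures that the admissibility conditions of \Cref{thm:rlmc} ($\bar h^{Q}+\bar h/R+(\bar h^{Q-1}+\bar h/R^{3/2})\sqrt{\kappa\bar h}\le 0.1$) and of \Cref{thm:rklmc} ($\kappa(\bar h^6/R^3+\bar h^{4Q-2})\le 10^{-6}$) are satisfied, so no additional restriction beyond those stated in the corollary is required.
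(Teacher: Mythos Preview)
Your proposal is correct and follows exactly the route the paper takes: \Cref{cor:3} is presented without a separate proof, as a direct simplification of \Cref{thm:rlmc} and \Cref{thm:rklmc} via the specialise--square--absorb--invert template you describe. The only imprecision is in your last paragraph: smallness of $\bar h$ as an absolute constant does not by itself enforce the $\kappa$-dependent admissibility hypotheses of the two theorems, but these are nonetheless automatic in the regime where \eqref{up:pRLMC}--\eqref{up:pRKLMC} are non-vacuous, since each offending quantity is (up to a square root) dominated by a term already appearing on the right-hand side of the bound.
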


To compare our results with Anari et al.~\cite{anari2024fast}, we first express their findings in the same format as our corollary above. For their parallel variants of Langevin Monte-Carlo (pLMC) and Kinetic Langevin Monte-Carlo (pKLMC), the bounds are:
\begin{align}
    \label{up:pLMC}
    \wass_2^2(\nu_n^{\sf pLMC},\pi) &\lesssim \bigg(
		e^{-n\bar h/\kappa} +  \frac{\kappa \bar h}{ R}\bigg)
		\cdot\frac{p}{m}\\
    \label{up:pKLMC}
    \wass_2^2(\nu_n^{\sf pKLMC},\pi) &\lesssim  \bigg(e^{-n\bar h/15\kappa}
		(1+\log \kappa) + \frac{n \bar h^3\kappa }{R^4} + 
            \frac{n\bar h^3}{R^2}\bigg)\cdot\frac{p}m\ .
\end{align}
Bound \eqref{up:pLMC} holds when $Mh\lesssim 1$, $\kappa Mh\lesssim \sqrt{R}$, and $Q\gtrsim 1+\log R$ (this follows from the final display in the proof of Theorem~13 in~\cite{anari2024fast}). While bound~\eqref{up:pLMC} appears explicitly in~\cite{anari2024fast}, bound~\eqref{up:pKLMC} is derived from their work. Specifically, they prove a similar bound for the TV-distance in Theorems~15, 20, and~21, and their analysis provides all the necessary components to establish~\eqref{up:pKLMC}.

Comparing bounds~\eqref{up:pRLMC} and~\eqref{up:pLMC} reveals that the randomized midpoint discretization achieves a better discretization error. The improvement, of order $\frac{\bar h^2}{R} + \frac{\bar h}{\kappa}$, can be substantial. A similar advantage appears in the kinetic Langevin diffusion algorithms: comparing~\eqref{up:pRKLMC} and~\eqref{up:pKLMC}, and noting that $n$ should be at least of order $\kappa/\bar h$, we find an improvement of order $\frac{\bar h^3}{R} + \frac{1}{\kappa}$. 

From bounds~\eqref{up:pLMC} and~\eqref{up:pKLMC}, we obtain analogs to relations~\eqref{n:pRLMC} and~\eqref{n:pRKLMC}:
\begin{align+}
    \textsf{pLMC}&: n\gtrsim \kappa \log(1/\varepsilon) \Big(1 + 
    \frac{\kappa}{\sqrt{R}} + \frac{\kappa}{R\varepsilon^2}\Big),
        \label{n:pLMC}\\
    \textsf{pKLMC}&: n\gtrsim \kappa\log(1/\varepsilon) \Big(1 +
    \Big\{\frac{\kappa}{R^2\varepsilon^2}\Big\}^{1/2} + \frac{\kappa}{R^2\varepsilon}\bigg)\log(1+\log\kappa).\label{n:pKLMC}
\end{align+}
These bounds demonstrate two key insights: first, increasing $R$ leads to a polynomial reduction in the required number of iterations, highlighting the benefits of parallelization. Second, when compared to~\eqref{n:pRLMC} and~\eqref{n:pRKLMC}, they confirm the advantages of the randomized midpoint discretization.

\subsection{Time complexity versus computational complexity}

Parallelization primarily aims to reduce time complexity by executing multiple computations simultaneously. Most often, this reduction in time complexity comes at the cost of increased computational complexity. In the context of sampling with first-order oracles, as considered here, algorithms achieving optimal time complexity might require more gradient queries than those optimized for computational complexity. Unfortunately, the algorithms considered here are no exception to this rule.  For example, the Langevin Monte Carlo algorithm has a computational complexity of order $(\kappa/\varepsilon^2)\log(1/\varepsilon)$ \citep{durmus2019analysis}. In contrast, the parallelized LMC version proposed in \citep{anari2024fast}, when optimized for time complexity, has a total computational complexity 
$nQR$ of order at least $(\kappa^2/\varepsilon^2)\log(1/\varepsilon)$. 
A consequence of our results is that, for the parallelized randomized midpoint Langevin algorithms (pRLMC and pRKLMC), optimizing for time complexity incurs only a moderate increase in computational complexity. Specifically, \Cref{cor:1} and \Cref{cor:2} establish that the number of gradient queries required for pRLMC and pRKLMC are of order 
\((\kappa/\varepsilon^2 + \kappa^{3/2}/\varepsilon) \log(1/\varepsilon) 
\log(\kappa/\varepsilon)\) and \((\kappa/\varepsilon + \kappa^{4/3} 
/\varepsilon^{2/3})\log(1/\varepsilon)\log(\kappa/\varepsilon)\), respectively. Combined with the rates established in \citep{shen2019randomized,he2020ergodicity,yu2023langevin}, 
this implies that the computational overhead is of order 
\[
\frac{\kappa/\varepsilon^2 + \kappa^{3/2}/\varepsilon}{\kappa/\varepsilon + \kappa^{4/3}/\varepsilon^{2/3}} \lesssim 1/\varepsilon +  (\sqrt{\kappa}/\varepsilon)^{1/3}
\]
in the case of the vanilla Langevin, and 
\[
\frac{\kappa/\varepsilon + \kappa^{4/3}/\varepsilon^{2/3}}{\kappa/\varepsilon^{2/3} + \kappa^{7/6}/\varepsilon^{1/3}} \lesssim (1/\varepsilon)^{1/3} + (\sqrt{\kappa}/\varepsilon)^{1/3}
\]
in the case of kinetic Langevin. These results demonstrate that 
kinetic Langevin diffusion-based algorithms achieve a favorable trade-off between time complexity and computational overhead.

\section{Acknowledgements}
This work was partially supported by the center \href{https://www.hi-paris.fr/}{Hi! PARIS} and the grant Investissements 
d’Avenir (ANR-11-IDEX0003/Labex Ecodec/ANR-11-LABX-0047).

 \bibliographystyle{elsarticle-num} 
 \bibliography{cas-refs}

\appendix
\section{Proof of the upper bound on the error of parallelized RLMC}\label{app:proof-rlmc}

This section is devoted to the proof of the upper bound on the error of parallelization of RLMC.
Without any risk of confusion, we will use the notation $\bvartheta_k$ instead
of $\bvartheta^{\sf pRLMC}_k$ to refer to the $k$-th iterate of the
RLMC. We will also use the shorthand notation
\begin{align}
    f_k: = f(\bvartheta_k),\qquad 
   \bg_k = \grad f(\bvartheta_k),\qquad 
   \bg_k^{(q,r)}=\grad f(\bvartheta_k^{(q,r)}),\qquad
    \text{and}\qquad \nabla f_{k+U}:=\grad f(\bvartheta_{k+U}).
\end{align}
{Moreover, throughout this section, we define $\delta=h/R,\bar h=Mh.$}

\textbf{Proof overview}: 
Our proof follows the strategy of Theorem~1 in~\cite{yu2023langevin}, which focuses on bounding two key elements: the mean discretization error and the deviation of each iterate $\bvartheta_k$. However, the parallel setting of our work requires significant adaptations to this approach. While~\cite{yu2023langevin} analyzes these bounds over a single interval $[0,h]$, we must consider $R$ separate time windows $[\frac{(r-1)h}{R}, \frac{rh}{R}]$, where $r = 1,\dots,R$. The corresponding bounds for the discretization error and deviation are established in \Cref{lem:A2} and \Cref{lem:A1}, respectively.

\begin{proof}[Proof of Theorem~\ref{thm:rlmc}]

    Let $\bvartheta_k\sim\nu_k$ be the state of the pRLMC algorithm 
    at the $k$th iteration, and let $\bL_0\sim\pi$ be a random vector 
    in $\mathbb R^p$ defined on the same probability space such that 
    $\wass_2(\nu_k,\pi) = \mathbb E[\|\bvartheta_k-\bL_0\|_2^2]$. We 
    assume that the probability space is sufficiently large to allow 
    the definition of a Brownian motion $\bar\bW$, independent of $(\bvartheta_k,\bL_0)$, and the sequence of iid random variables, uniformly distributed in $[0,1]$, $U_{k1},\ldots, U_{kR}$, 
    independent of $(\bvartheta_k,\bL_0,\bar\bW)$. 
    We define the Langevin diffusion 
    \begin{align}\label{eq-int-lang}
    		\bL_t = \bL_0 - \int_{0}^{t}\nabla f(\bL_s)\,\rmd s + \sqrt{2}\, \bar\bW_t,\qquad t\in[0,h].
    \end{align}
    We can then define the random variables $\bxi_k$ and $(\bxi_{kr}
    )_{1\leqslant r\leqslant R}$ of the steps 3 and 4 of 
    \Cref{RLMCp} by $\bxi_k = \sqrt{2}\,\bar\bW_h$ and 
    $\bxi_{kr} = \sqrt{2}\,\bar\bW_{hU_{kr}}$, so that a version
    of $\bvartheta_{k+1}$ is given by $\bvartheta_{k+1} = 
    \bvartheta_k - \frac{h}{R}\sum_{r=1}^R \nabla f(\bvartheta_k^{
    (Q-1,r)}) + \bxi_k$. 

    We set $x_k = \wass_2(\nu_k,\pi)$ and note that
    \begin{align}
        x_{k+1} \leqslant \mathbb E[\|\bvartheta_{k+1}- \bL_{h}\|_2^2] := \|\bvartheta_{k+1}- \bL_{h} \|_{\Ltwo}^2. 
    \end{align}
    We will also consider the Langevin process on the time interval $[0,h]$ given by 
    \begin{equation}\label{eq-int-lang1}
		\bL'_t = \bL'_0 - \int_{0}^{t}\nabla f(\bL'_s)\,\rmd s + \sqrt{2} \,\bar\bW_{t},\qquad \bL'_0 = \bvartheta_k. 
    \end{equation}
    Note that the Brownian motion is the same as in \eqref{eq-int-lang}. 
   Let us introduce one additional notation, the average of 
    $\bvartheta_{k+1}$ with respect to $U_{k1},\dots,U_{kR}$,
    \begin{equation}
        \bar\bvartheta_{k+1} = \mathbb E[\bvartheta_{k+1} | 
        \bvartheta_k,\bar\bW,\bL_0]. 
    \end{equation}
    Since $\bL_{h}$ is independent of $U_{k1},\dots,U_{kr}$, it is clear that 
    \begin{align}
        x_{k+1}^2 
        & \leqslant \|\bvartheta_{k+1} - \bar\bvartheta_{k+1} 
        \|_{\Ltwo}^2 + \|\bar\bvartheta_{k+1} - \bL_{kh} 
        \|_{\Ltwo}^2.
    \end{align}
    Furthermore, the triangle inequality yields
    \begin{align}
        \|\bar\bvartheta_{k+1} - \bL_{h} \|_{\Ltwo}
        &\leqslant \|\bar\bvartheta_{k+1} - \bL'_{h} \|_{\Ltwo} +
        \|\bL'_h - \bL_{h} \|_{\Ltwo}.
    \end{align}
    From the exponential ergodicity of the Langevin diffusion \cite{bhattacharya1978}, we get
    \begin{align}
        \|\bL'_h-\bL_{h}\|_{\Ltwo} \leqslant e^{-mh}\|\bL'_0-\bL_{0}\|_{\Ltwo} = e^{-mh}\|\bvartheta_k - \bL_{0}
        \|_{\Ltwo} = e^{-mh}x_k.
    \end{align}
    Therefore, we get
    \begin{align}
        x_{k+1}^2 
        & \leqslant \|\bvartheta_{k+1} - \bar\bvartheta_{k+1} 
        \|_{\Ltwo}^2 + \big(\| \bar\bvartheta_{k+1} -\bL'_h
        \|_{\Ltwo} + e^{-mh}x_k \big)^2\,.
        \label{eq:4.4}
    \end{align}
We will need the following lemmas, the proofs of which are postponed.
\begin{lemma}\label{lem:A2}
    Let $bar h = Mh$ and $\delta = h/R$. 
    When $\bar h \leqslant 0.1$ and $R\geqslant 2$, it holds that 
    \begin{align}
        \|\bar\bvartheta_{k+1}-\bL'_h\|_{\Ltwo} 
        &
        \leqslant {\bar h^{Q}}\big(0.56 h \|\bg_k \|_{\mathbb L_2} +
        1.05\sqrt{h p}\big) + \frac{2\bar h^2}{3R}  \big(\sqrt{h\delta} \| \bg_k\|_{\Ltwo}  + 2 \sqrt{hp}\big).
    \end{align}
\end{lemma}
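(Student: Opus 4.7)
\textbf{Proof plan for Lemma~\ref{lem:A2}.}
The plan is to exploit the unbiasedness of the randomized midpoint quadrature after averaging over $U_{k1},\ldots,U_{kR}$, and then to control the systematic discrepancy $\bvartheta_k^{(Q-1,r)} - \bL'_{hU_{kr}}$ by a contraction-type induction on the inner index $q$. Using $\bL'_h = \bvartheta_k - \int_0^h \nabla f(\bL'_s)\,\rmd s + \bxi_k$ together with the definition of $\bar\bvartheta_{k+1}$, the terms $\bvartheta_k$ and $\bxi_k$ cancel, giving
\begin{align*}
\bar\bvartheta_{k+1} - \bL'_h = \int_0^h \nabla f(\bL'_s)\,\rmd s - \frac{h}{R}\sum_{r=1}^R \mathbb{E}_U\bigl[\nabla f(\bvartheta_k^{(Q-1,r)})\bigr],
\end{align*}
where $\mathbb{E}_U$ denotes conditional expectation given $(\bvartheta_k,\bar\bW,\bL_0)$. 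Since $hU_{kr}$ is uniform on $[(r-1)h/R,rh/R]$ and $\bL'$ is independent of the $U_{kr}$'s, the midpoint rule yields $(h/R)\,\mathbb{E}_U[\nabla f(\bL'_{hU_{kr}})] = \int_{(r-1)h/R}^{rh/R}\nabla f(\bL'_s)\,\rmd s$. Summing over $r$ and subtracting produces the master identity
\begin{align*}
\bar\bvartheta_{k+1} - \bL'_h = \frac{h}{R}\sum_{r=1}^R \mathbb{E}_U\bigl[\nabla f(\bL'_{hU_{kr}}) - \nabla f(\bvartheta_k^{(Q-1,r)})\bigr],
\end{align*}
so Jensen combined with the $M$-Lipschitz property of $\nabla f$ reduces the task to bounding $\max_r \|E_k^{(Q-1,r)}\|_{\Ltwo}$, where $E_k^{(q,r)} := \bvartheta_k^{(q,r)} - \bL'_{hU_{kr}}$.

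To bound $\|E_k^{(q,r)}\|_{\Ltwo}$ I would induct on $q$. At $q=0$, $E_k^{(0,r)} = \bvartheta_k - \bL'_{hU_{kr}}$ is controlled by integrating the SDE for $\bL'$ on $[0,hU_{kr}]$: a short Gr\"onwall-type argument using $\bar h \leqslant 0.1$ gives $\|E_k^{(0,r)}\|_{\Ltwo} \leqslant c_1 h\|\bg_k\|_{\Ltwo} + c_2\sqrt{hp}$ with explicit small $c_1,c_2$. For $q\geqslant 1$, subtracting the SDE representation of $\bL'_{hU_{kr}}$ from line~9 of Algorithm~\ref{RLMCp} (the term $\bxi_{kr}$ cancels) gives
\begin{align*}
E_k^{(q,r)} = \sum_{j=1}^{r-1}\int_{(j-1)h/R}^{jh/R}\bigl[\nabla f(\bL'_s)-\nabla f(\bvartheta_k^{(q-1,j)})\bigr]\rmd s + \int_{(r-1)h/R}^{hU_{kr}}\bigl[\nabla f(\bL'_s)-\nabla f(\bvartheta_k^{(q-1,r)})\bigr]\rmd s.
\end{align*}
Splitting each integrand as $[\nabla f(\bL'_s)-\nabla f(\bL'_{hU_{kj}})] + [\nabla f(\bL'_{hU_{kj}})-\nabla f(\bvartheta_k^{(q-1,j)})]$ and using Lipschitzness, the second piece contracts $\max_r \|E_k^{(q-1,r)}\|_{\Ltwo}$ by a factor $\bar h$, while the first piece contributes a non-contracting residual of size $\bar h\bigl(\sqrt{h\delta}\|\bg_k\|_{\Ltwo} + \sqrt{hp}\bigr)/\sqrt{R}$ after Pythagorean summation of the Brownian contributions coming from the $r\leqslant R$ disjoint mesh intervals.

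Iterating the recursion $Q-1$ times, the base bound is geometrically decayed by $\bar h^{Q-1}$, and the per-step residuals sum into a geometric series bounded, under $\bar h\leqslant 0.1$, by $(1-\bar h)^{-1}\leqslant 10/9$. Plugging the resulting estimate of $\max_r \|E_k^{(Q-1,r)}\|_{\Ltwo}$ back into the master identity of Step~1 and absorbing the outer prefactor $Mh = \bar h$ yields the two bracketed expressions announced in the lemma; the concrete constants $0.56$, $1.05$ and $2/3$ emerge from the numerical bookkeeping, using $\bar h \leqslant 0.1$ to absorb higher-order terms and $R\geqslant 2$ to control mixed $1/R$ versus $1/\sqrt{R}$ factors.

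The main obstacle will be preserving the $1/R$ and $1/\sqrt{R}$ scalings inside the non-contracting bracket $\frac{2\bar h^2}{3R}(\sqrt{h\delta}\|\bg_k\|_{\Ltwo} + 2\sqrt{hp})$. A naive triangle inequality across the $R$ mesh intervals would leak a spurious $\sqrt{R}$ factor into the $\sqrt{hp}$ term; avoiding this requires summing the Brownian contributions in an $L^2$-Pythagorean way, exploiting that the increments $\bar\bW_s - \bar\bW_{hU_{kj}}$ for different $j$'s lie on disjoint time intervals and are therefore orthogonal in $\Ltwo$ after averaging over the $U_{kj}$'s. Handling the random partial endpoint $[(r-1)h/R, hU_{kr}]$ of length $a_{krr}h$, and making sure that the partial-interval contribution for $j=r$ is bounded in the same $L^2$-orthogonal fashion, adds further care to the tracking of the small numerical constants.
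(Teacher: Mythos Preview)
Your plan is essentially the paper's proof: same master identity via unbiasedness, same inductive contraction on $q$ with factor $\bar h$, and crucially the same Pythagorean summation exploiting that the variables $Z_j=\int_{(j-1)\delta}^{j\delta}\big(\nabla f(\bL'_s)-\nabla f(\bL'_{hU_{kj}})\big)\,\rmd s$ are centered and orthogonal in $\Ltwo$ (conditionally on $\bar\bW$, each $Z_j$ has mean zero since $U_{kj}$ is uniform on its subinterval). Two small points of divergence are worth flagging. First, the paper tracks the \emph{average} $\frac{1}{R}\sum_r\|E_k^{(q,r)}\|_{\Ltwo}$ through the recursion rather than $\max_r\|E_k^{(q,r)}\|_{\Ltwo}$; both choices give the same contraction factor $\bar h$, but averaging is what produces the specific constants $0.56$ and $1.05$ in the $\bar h^Q$ bracket (the base term $\frac{1}{R}\sum_j\|\bL'_0-\bL'_{hU_{kj}}\|_{\Ltwo}$ picks up the factors $\frac12$ and $\frac23$ from $\frac1R\sum j$ and $\frac1R\sum\sqrt{j}$, whereas the max at $j=R$ would roughly double them). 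Second, your stated per-step residual $\bar h(\cdots)/\sqrt{R}$ is off by a $\sqrt{R}$: the Pythagorean bound gives $\|\sum_{j<r}Z_j\|_{\Ltwo}\lesssim M\delta\sqrt{r}\,(\delta\|\bg_k\|_{\Ltwo}+\sqrt{\delta p})$, and since $M\delta=\bar h/R$ and $\sqrt{R\delta}=\sqrt{h}$, the residual is of order $\frac{\bar h}{R}(\sqrt{h\delta}\|\bg_k\|_{\Ltwo}+\sqrt{hp})$, which after the geometric sum and the outer $\bar h$ is exactly the $\frac{2\bar h^2}{3R}(\cdots)$ bracket. With those two adjustments your outline coincides with the paper's argument.
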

\begin{lemma}\label{lem:A1}
    Let $bar h = Mh$ and $\delta = h/R$. 
    When $\bar h \leqslant 0.1$ and $R\geqslant 2$, it holds that 
\begin{align}
    \|\bvartheta_{k+1} - \bar\bvartheta_{k+1} \|_{\Ltwo}
    &\leqslant \bar h^Q(0.56h \|\bg_k\|_{\mathbb L_2} + 1.05 
    \sqrt{hp}) + 
    0.7\, \bar h\big(\delta \|\bg_k\|_{\mathbb L_2}  + 
    1.7\sqrt{\delta p}\big)\,.
\end{align}
\end{lemma}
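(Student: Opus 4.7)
My plan is to decompose $\bvartheta_{k+1} - \bar\bvartheta_{k+1}$ into a \emph{randomized midpoint variation} and an \emph{inner Picard error}, and to control them separately. Since $\bxi_k$ in Algorithm~\ref{RLMCp} does not depend on $U_{k1},\dots,U_{kR}$, averaging the update over these variables gives
\begin{equation*}
\bvartheta_{k+1} - \bar\bvartheta_{k+1} = -\frac{h}{R}\sum_{r=1}^R \bigl(\nabla f(\bvartheta_k^{(Q-1,r)}) - \E_U\,\nabla f(\bvartheta_k^{(Q-1,r)})\bigr),
\end{equation*}
where $\E_U$ freezes $(\bvartheta_k,\bar\bW,\bL_0)$ and integrates out $\vec U = (U_{k1},\dots,U_{kR})$. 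The pivotal step is to insert $\pm\nabla f(\bL'_{hU_{kr}})$ inside the sum, splitting each summand into $\nabla f(\bL'_{hU_{kr}}) - \E_U\nabla f(\bL'_{hU_{kr}})$ (the randomized midpoint variation) and $(\nabla f(\bvartheta_k^{(Q-1,r)}) - \nabla f(\bL'_{hU_{kr}})) - \E_U[\,\cdots]$ (the inner-loop discrepancy).

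For the first piece, the $U_{kr}$ are mutually independent and, after insertion, the $r$-th summand depends only on $U_{kr}$, so the $R$ terms are conditionally (on $\bar\bW$ and $\bvartheta_k$) mean-zero and independent. Pythagoras combined with the $M$-Lipschitzness of $\nabla f$ gives a bound of the form $(\bar h^2/R)\cdot \max_r \E\|\bL'_{hU_{kr}} - \bL'_{h(r-1)/R}\|^2$, and a standard $\Ltwo$-estimate for the Langevin SDE over the sub-interval of length $\delta = h/R$ controls this maximum by $O((\delta\|\bg_k\|_{\Ltwo} + \sqrt{\delta p})^2)$. Taking square roots produces the second term $0.7\,\bar h(\delta\|\bg_k\|_{\Ltwo} + 1.7\sqrt{\delta p})$ of the lemma.

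For the inner-loop part, I would introduce $\bZ_{q,r} := \bvartheta_k^{(q,r)} - \bL'_{hU_{kr}}$. Subtracting the integral form of the Langevin diffusion from the algorithm's update reveals a recursion in $q$ of the form $\|\bZ_{q,r}\|_{\Ltwo} \lesssim h\sum_{j=1}^{r} a_{kjr}\cdot M\,\|\bZ_{q-1,j}\|_{\Ltwo} + (\text{nested quadrature remainder})$, whose effective contraction factor is $\bar h$ since $h\sum_j a_{kjr}\cdot M\leqslant hM = \bar h$. Starting from $\|\bZ_{0,r}\|_{\Ltwo} \leqslant h\|\bg_k\|_{\Ltwo} + \sqrt{2hp}$ and iterating $Q-1$ times gives $\max_r \|\bZ_{Q-1,r}\|_{\Ltwo} \lesssim \bar h^{Q-1}(h\|\bg_k\|_{\Ltwo} + \sqrt{hp})$. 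Substituting this into the triangle-inequality bound $\|(h/R)\sum_r M(\bZ_{Q-1,r} - \E_U\bZ_{Q-1,r})\|_{\Ltwo}\leqslant 2\bar h\max_r\|\bZ_{Q-1,r}\|_{\Ltwo}$ yields the first term $\bar h^Q(0.56\,h\|\bg_k\|_{\Ltwo} + 1.05\sqrt{hp})$.

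The hardest step will be the inner-loop contraction: the iterates $\bvartheta_k^{(q,r)}$ are coupled through the partial-sum structure $\sum_{j=1}^{r} a_{kjr}\nabla f(\bvartheta_k^{(q-1,j)})$, so the recursion for $\max_r\|\bZ_{q,r}\|_{\Ltwo}$ requires careful bookkeeping of the $a_{kjr}$ and of the nested quadrature remainders, which themselves have the same randomized midpoint structure but on the shorter interval $[(r-1)h/R,rh/R]$. Ensuring that these accumulated remainders do not spoil the $\bar h^{Q-1}$ contraction factor, nor degrade the advertised numerical constants $0.56$, $1.05$, $0.7$, $1.7$, will demand a tight uniform-in-$u\in[0,h]$ $\Ltwo$-bound on $\bL'_{hu}-\bvartheta_k$, carried out under the standing assumption $\bar h\leqslant 0.1$ and $R\geqslant 2$.
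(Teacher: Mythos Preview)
Your overall strategy is sound and close in spirit to the paper's, but the decomposition differs in one meaningful way. The paper does \emph{not} insert the random anchor $\nabla f(\bL'_{hU_{kr}})$. Instead it uses the variance-minimization property directly: since $\E_U[\bg_k^{(Q-1,r)}]$ minimizes the $\Ltwo$-deviation, it replaces $\E_U[\bg_k^{(Q-1,r)}]$ by the \emph{deterministic} anchor $\nabla f(\bL'_{(r-1)\delta})$, obtaining
\[
\|\bvartheta_{k+1}-\bar\bvartheta_{k+1}\|_{\Ltwo}\leqslant \frac{\bar h}{R}\sum_{r=1}^R\big\|\bvartheta_k^{(Q-1,r)}-\bL'_{(r-1)\delta}\big\|_{\Ltwo},
\]
and then splits each summand into $\|\bvartheta_k^{(Q-1,r)}-\bL'_{hU_{kr}}\|$ (controlled by the paper's Lemma~\ref{lem:A0}, which is precisely your averaged Picard recursion) plus $\|\bL'_{hU_{kr}}-\bL'_{(r-1)\delta}\|$ (the SDE increment over the sub-interval). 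No Pythagoras or independence argument is invoked, and both pieces enter with the uniform weight $\bar h\cdot\frac{1}{R}\sum_r$ rather than $\bar h\cdot\max_r$ or $\bar h/\sqrt{R}$. Your route potentially extracts an extra $1/\sqrt{R}$ on the randomized-midpoint piece, which the paper simply throws away; conversely the paper's route is shorter and keeps the bookkeeping symmetric.

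Two places in your sketch need tightening. First, the claim $\max_r\|\bZ_{Q-1,r}\|_{\Ltwo}\lesssim \bar h^{Q-1}(h\|\bg_k\|_{\Ltwo}+\sqrt{hp})$ is false as written: the Picard recursion carries an additive quadrature remainder at every step (of order $M\delta\sqrt{R}\cdot\sqrt{\delta p}$ when you take the max over $r$), and this does not vanish as $Q\to\infty$. The paper works with the \emph{average} $\frac{1}{R}\sum_r\|\bZ_{Q-1,r}\|$, for which the remainder is $O(M\delta\sqrt{hp})$, and shows that after one further multiplication by $\bar h$ this extra piece is absorbed into the second term of the lemma. Second, your factor $2$ in $\|(h/R)\sum_r M(\bZ_{Q-1,r}-\E_U\bZ_{Q-1,r})\|\leqslant 2\bar h\max_r\|\bZ_{Q-1,r}\|$ is unnecessary (the conditional-expectation projection already gives $\|X-\E_U X\|_{\Ltwo}\leqslant\|X\|_{\Ltwo}$) and would not recover the constant $0.56$; replacing $2\max_r$ by $\frac{1}{R}\sum_r$ cures both issues simultaneously and brings you back to the paper's argument.
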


    One can check by induction that if for some $A\in [0,1]$ 
    and for two positive sequences $\{B_k\}$ and $\{C_k\}$
    the inequality $x_{k+1}^2\leqslant \big\{(1-A)x_k + C_k
    \big\}^2 + B_k^2$ holds for every integer $k\geqslant 0$,
    then\footnote{This is an extension of 
    \citep[Lemma 7]{dalalyan2019user}. It essentially relies 
    on the elementary $\sqrt{(a+b)^2 + c^2}\leqslant a+ 
    \sqrt{b^2 + c^2}$, which should be used to prove the 
    induction step.} 
    \begin{align}\label{eq:xk}
        x_n\leqslant (1 - A)^n x_0 + \sum_{k=0}^n (1-A)^{n-k} 
        C_k +\bigg\{\sum_{k=0}^n (1-A)^{2(n-k)}B_k^2\bigg\}^{1/2}
    \end{align}
In view of \eqref{eq:xk}, \eqref{eq:4.4}, \Cref{lem:A2},
 and \Cref{lem:A1}, for $\rho = e^{-mh}$, we get
 {
 \begin{align}
        x_n&\leqslant \rho^{n} x_0 + \sum_{k=0}^n 
        \rho^{n-k} \Big\{ \bar h^{Q} (0.56h\|\bg_k\|_{\Ltwo} + 
				1.05\sqrt{hp} )  + \frac23\bar h^2\Big(\frac{h}{R^{3/2}}\|\bg_k\|_{\Ltwo} +\frac{2\sqrt{hp}}{R}
                \Big)\Big\}\\
    & \quad  + \Bigg\{\sum_{k=0}^n\rho^{2(n-k)} 
    \Big[ \bar h^Q(0.56h \|\bg_k\|_{\mathbb L_2} + 1.05 
    \sqrt{hp}) + 0.7\, \bar h\big(\delta \|\bg_k\|_{\mathbb L_2}  + 
    1.7\sqrt{\delta p}\big) \Big]^2\Bigg\}^{1/2}\\
    &\leqslant \rho^n x_0 + 0.56 h\bar h^Q \sum_{k=0}^n \rho^{n-k}\|\bg_k\|_{\Ltwo} + 1.11 \bar h^Q\frac{\sqrt{hp}}{mh} 
    + \frac{2\bar h^2\,h}{3R^{3/2}} \sum_{k=0}^n \rho^{n-k}
    \|\bg_k\|_{\Ltwo} \\
    &\quad    + 1.41 \bar h^2\frac{\sqrt{hp}}{R}\frac{1}{mh}
    + 0.56 h \bar h^{Q}\bigg\{ \sum_{k=0}^n \rho^{2(n-k)} 
    \|\bg_k\|_{\Ltwo}^2 \bigg\}^{1/2} + 0.58 \bar h^{Q} 
    \sqrt{\frac{p}{m}} \\
    &\quad + 0.7\frac{\bar h\, h}{R}\bigg\{ \sum_{k=0}^n \rho^{2(n-k)}
    \|\bg_k\|_{\Ltwo}^2 \bigg\}^{1/2} +  0.66\frac{\bar h}{\sqrt{R}}\sqrt{\frac{p}{m}} \,.
        \label{eq:xn-bound}
\end{align}
}
The last display follows from the fact that $1-\rho
\geqslant 0.95 mh$. Let us set $S_n(g^2) = 
\sum_{k=0}^n\rho^{n-k} \|\bg_k\|_{\Ltwo}^2$. By leveraging 
the inequality $1-\rho \geqslant 0.95 mh$ and invoking 
the Cauchy–Schwarz inequality, we obtain
\begin{align}
 \bar h \sum_{k=0}^n \rho^{n-k}\|\bg_k\|_{\Ltwo}
 & \leqslant \bigg(\bar h^{2}\sum_{k=0}^n\rho^{n-k}\sum_{k=0}^n\rho^{n-k} \|\bg_k\|_{\Ltwo}^2\bigg)^{1/2}
 \leqslant 1.03 \sqrt{\kappa M h S_n(g^2)}\,.
\end{align}
Plugging these into display~\eqref{eq:xn-bound} then gives 
{
\begin{align}
    x_n  &\leqslant \rho^{n} x_0  +  h\bigg(0.6 \bar h^{Q} + 0.7
    \frac{\bar h}{R} + 0.6 \bar h^{Q-1}\sqrt{\kappa Mh} + 
    0.7 \frac{\bar h}{R^{3/2}}\sqrt{\kappa Mh}\bigg) \sqrt{S_n(g^2)}\\
    &\qquad +  \Big(1.11\bar h^{Q-1}\sqrt{\kappa Mh}+1.41\frac{\bar h}{R}\sqrt{\kappa Mh} +0.6\bar h^Q+ \frac{0.7\bar h}{\sqrt{R}}\Big)  \sqrt{{p}/{m}}.
\end{align}}
In what follows, let us use the following notation
\begin{align}
    \Diamond = \bar h^{Q}+\frac{\bar h}{{R}}+(\bar h^{Q-1}+\bar h/R^{3/2})\sqrt{\kappa Mh},\quad 
    \triangle = \bar h^{Q}+\frac{\bar h}{\sqrt{R}}+(\bar h^{Q-1}+\bar h/R)\sqrt{\kappa Mh}\,.
\end{align}
in order to simplify writing. With this in mind, we can rewrite the last 
display as
\begin{align}
    x_n 
    &\leqslant \rho^{n} x_0 
    + 0.7 h \Diamond \sqrt{S_n(g^2)}+  1.41 \triangle \sqrt{{p}/{m}}
    \label{eq:xn-bound1}
\end{align}
We will need the following lemma for finding an upper bound 
on the right-hand side of the last display.
\begin{lemma}\label{lem:A3} 
    If $Mh\leqslant 0.1$, the following 
    inequality holds
    \begin{align}
        h^2 S_n(g^2) = 
        h^2\sum_{k=0}^n \rho^{n-k} \|\nabla f ( 
        \bvartheta_k)\|^2_{\Ltwo} 
        &\leqslant 0.17
        \rho^n\|\bvartheta_0-\btheta^*\|_{\Ltwo}^2+ 0.94 (p/m)\,,
    \end{align}    
    where $\bvartheta^*$ denotes the minimizer of the function $f$.
    \end{lemma}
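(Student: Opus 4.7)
My plan is to establish Lemma~A3 by a Lyapunov argument on $V_k := \E[f(\bvartheta_k) - f(\btheta^*)]$, combined with Abel summation against the geometric weights $\rho^{n-k}$. The direct route via the pointwise inequality $\|\nabla f(\btheta)\|^2 \leq M^2\|\btheta - \btheta^*\|^2$ and a contraction of $\|\bvartheta_k - \btheta^*\|_{\Ltwo}^2$ would produce an unwanted factor $\bar h^2/(1-\rho)\asymp \bar h\kappa$ after summation. To avoid this, I convert $\|\nabla f\|_{\Ltwo}^2$ into a one-step decrease of the potential, telescope, and only pay a factor $1/(1-\rho)$, which is then absorbed by the outer factor $h$ in $h^2\sum\rho^{n-k}\|\nabla f\|_{\Ltwo}^2$.

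Applying $M$-smoothness of $f$ to the update $\bvartheta_{k+1}-\bvartheta_k = -hg_k + \bxi_k$ with $g_k := R^{-1}\sum_r \nabla f(\bvartheta_k^{(Q-1,r)})$, and taking expectations, I obtain a one-step descent
\[
    V_{k+1} \leq V_k - h(1 - Mh/2)\|\nabla f(\bvartheta_k)\|_{\Ltwo}^2 + Mhp + \mathcal R_k,
\]
where $\mathcal R_k$ gathers the cross terms involving the estimator error $\varepsilon_k := g_k - \nabla f(\bvartheta_k)$ and the inner product $\E\langle g_k,\bxi_k\rangle = \E\langle\varepsilon_k,\bxi_k\rangle$ (non-zero because $\bxi_k$ and the sub-step noises $\bxi_{k,r}$ are built from the same Brownian motion). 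By Cauchy--Schwarz and Young's inequality, $\mathcal R_k$ is controlled by a small multiple of $h\|\nabla f\|_{\Ltwo}^2 + h\|\varepsilon_k\|_{\Ltwo}^2 + Mh^2p$, and $\|\varepsilon_k\|_{\Ltwo}$ is bounded using the estimates for $\bvartheta_k^{(q,r)} - \bvartheta_k$ already developed in the proofs of \Cref{lem:A2} and \Cref{lem:A1}. Since $\bar h \leq 0.1$, this yields a clean inequality
\[
    c_1\,h\,\|\nabla f(\bvartheta_k)\|_{\Ltwo}^2 \leq V_k - V_{k+1} + c_2\,Mhp
\]
for explicit positive constants $c_1,c_2$.

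The Polyak--Łojasiewicz inequality $\|\nabla f\|_{\Ltwo}^2 \geq 2mV_k$ combined with the descent gives the geometric recursion $V_{k+1} \leq (1 - 2c_1 mh)V_k + c_2Mhp$ and, by iteration, $V_k \leq (1 - 2c_1 mh)^k V_0 + (c_2/2c_1)\,\kappa p$. I then multiply the descent by $h\rho^{n-k}$, sum from $k=0$ to $n$, and apply Abel summation:
\[
    \sum_{k=0}^n \rho^{n-k}(V_k - V_{k+1}) = \rho^n V_0 + (1-\rho)\sum_{k=1}^n \rho^{n-k}V_k - V_{n+1}.
\]
Plugging in the bound on $V_k$ and using $(1-\rho)\geq 0.95\,mh$, the telescoped sum is $\lesssim V_0\rho^n + \kappa p$, while $c_2\,Mhp\sum_{k=0}^n\rho^{n-k}\lesssim \kappa p$. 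The outer factor of $h$ then converts every $\kappa$ into $\bar h/m$ (since $h\kappa = \bar h/m$), and using $V_0\leq \tfrac M2 \|\bvartheta_0-\btheta^*\|_{\Ltwo}^2$ together with $\bar h\leq 0.1$ produces a bound of the claimed form $c_3\rho^n\|\bvartheta_0 - \btheta^*\|_{\Ltwo}^2 + c_4 p/m$ with explicit numerical $c_3,c_4$.

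The main obstacle is the derivation of the one-step descent inequality. The correlation between $g_k$ and $\bxi_k$ forces a careful accounting of $\E\langle\varepsilon_k,\bxi_k\rangle$, and $\varepsilon_k$ itself is not a single-step quantity but accumulates over $Q$ nested parallel iterations. Bounding $\|\varepsilon_k\|_{\Ltwo}$ tightly enough for $\mathcal R_k$ to remain dominated by $h\|\nabla f\|_{\Ltwo}^2$ and $Mhp$, across all admissible $(Q,R,h)$, is the bulk of the computational effort. Once this is established, the Lyapunov contraction and the Abel telescope are essentially routine.
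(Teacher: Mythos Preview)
Your approach is correct and follows essentially the same strategy as the paper: derive a one-step descent inequality for $V_k = \E[f(\bvartheta_k)-f^*]$ via $M$-smoothness and a bound on the estimator error $\|\varepsilon_k\|_{\Ltwo}$, then sum against the discounted weights $\rho^{n-k}$ and invoke Polyak--\L ojasiewicz.

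Two minor differences in execution are worth noting. First, the paper does not iterate the recursion on $V_k$ as you propose; after Abel summation (their \Cref{lem:prev}) it directly bounds $(1-\rho)S_n(f)\leqslant mh\,S_n(f)\leqslant \tfrac{h}{2}S_n(g^2)$ via PL and absorbs this term into the left-hand side. This sidesteps the comparison of two geometric rates $\rho$ and $1-2c_1mh$ that your route requires, and yields the constants $0.17$ and $0.94$ more cleanly. Second, the paper does not isolate the cross term $\E\langle\varepsilon_k,\bxi_k\rangle$: it simply bounds the quadratic contribution $\tfrac{M}{2}\|\delta\sum_i\bg_k^{(Q-1,i)}-\bxi_k\|_{\Ltwo}^2$ via the triangle inequality (separating out $\|\bg_k^{(Q-1,i)}-\bg_k\|_{\Ltwo}$ and $\|h\bg_k-\bxi_k\|_{\Ltwo}$), which already suffices for the stated constants. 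Your concern about this correlation is legitimate but turns out to be unnecessary for the result.
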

The claim of this lemma together with \eqref{eq:xn-bound1} entails that
\begin{align}
x_n
& \leqslant \rho^n x_0
+0.7 \Diamond
\Big(0.42\rho^{n/2}\|\bvartheta_0 - \btheta^*\|_{\Ltwo}+0.97\sqrt{{p}/{m}} \Big)  + 1.41 \triangle \sqrt{{p}/{m}}\,.
\end{align}
Rearranging the display and noting that $\|\bvartheta_0 - \btheta^*\|_{\Ltwo}\leqslant \wass_2(\nu_0,\pi)+\sqrt{p/m}$ and $\Diamond\leqslant \triangle$, 
we arrive at 
\begin{align}
x_n & \leqslant \rho^n x_0 + 0.3 \Diamond e^{-mnh/2}
    \|\bvartheta_0-\btheta^*\|_{\Ltwo}  +  2.1 \triangle 
    \sqrt{{p}/{m}}\\
    & \leqslant \rho^n x_0 +0.3 \Diamond e^{-mnh/2}
    \wass_2(\nu_0,\pi) +  2.1 \triangle \sqrt{{p}/{m}}\\
    & \leqslant \big[1+ 0.3 \Diamond
    \big] e^{-mnh/2} \wass_2(\nu_0,\pi) + 2.1 
    \triangle \sqrt{{p}/{m}}\,.
\end{align}
When $\Diamond \leqslant 0.1,$ we obtain $
x_n\leqslant 1.03e^{-mnh/2}\wass_2(\nu_0,\pi)
+ 2.1 \triangle \sqrt{{p}/{m}}$ 
as desired.

\end{proof}

\subsection{Proof of technical lemmas}

In this section, we present proofs for the technical lemmas that underpin \Cref{thm:rlmc}. The first lemma provides an upper bound on the error between the averaged iterate $\bar\bvartheta_{k+1}$ and the continuous-time diffusion $\bL'$, which originates from $\bvartheta_k$ and evolves until time $h$. 
The second lemma establishes an upper bound on the distance separating the iterate $\bvartheta_{k+1}$ from its averaged counterpart $\bar\bvartheta_{k+1}$. Both of these upper bounds contain the norm of the potential's gradient evaluated at $\bvartheta_k$. The third lemma derives an upper bound for 
the discounted sums of squared gradient norms.

We first need the following auxiliary lemma.
{
\begin{lemma}\label{lem:A0}
    Let $bar h = Mh$ and $\delta = h/R$. 
    When $\bar h \leqslant 0.1$ and $R\geqslant 2$, it holds that 
\begin{align}
    \frac{1}{R}\sum_{r=1}^R  \big\|
    \bvartheta_{k}^{(Q-1,r)}-  \bL'_{hU_{kr}} \big\|_{\Ltwo}
    &\leqslant {\bar h^{Q-1}}\big\{0.56 h \|\bg_k \|_{\mathbb L_2} +
    0.74\sqrt{2h p}\big\} + \frac{2M\delta}3  \big\{\sqrt{h\delta} \| \bg_k\|_{\Ltwo}  + 2 \sqrt{hp}\big\}.
\end{align}
\end{lemma}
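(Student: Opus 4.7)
The plan is to establish a contractive recursion in $q$ for the averaged error
\[
D_q := \tfrac{1}{R}\sum_{r=1}^R \|\bvartheta_k^{(q,r)} - \bL'_{hU_{kr}}\|_{\Ltwo},
\]
and iterate it from $q=0$ up to $q=Q-1$. The starting point is an integral representation of the one-step error. Since $\bxi_{kr} = \sqrt{2}\,\bar\bW_{hU_{kr}}$ (as fixed in the proof of \Cref{thm:rlmc}) and the coefficients $a_{kjr}$ are exactly the lengths of the sub-intervals of $[0,hU_{kr}]$, subtracting the pRLMC update from the integrated SDE for $\bL'_{hU_{kr}}$ gives, with $t_j := jh/R$,
\[
\bvartheta_k^{(q,r)} - \bL'_{hU_{kr}} = \sum_{j=1}^{r-1}\int_{t_{j-1}}^{t_j}\!\bigl[\nabla f(\bL'_s) - \nabla f(\bvartheta_k^{(q-1,j)})\bigr]\,ds + \int_{t_{r-1}}^{hU_{kr}}\!\bigl[\nabla f(\bL'_s) - \nabla f(\bvartheta_k^{(q-1,r)})\bigr]\,ds.
\]

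I would then apply $M$-Lipschitzness of $\nabla f$ together with the decomposition
$\|\bL'_s - \bvartheta_k^{(q-1,j)}\|_{\Ltwo} \leqslant \|\bL'_s - \bL'_{hU_{kj}}\|_{\Ltwo} + \|\bL'_{hU_{kj}} - \bvartheta_k^{(q-1,j)}\|_{\Ltwo}$.
After integration in $s$ and averaging over $r$, the second piece yields the contractive term $\bar h\, D_{q-1}$. The first piece is an intra-segment SDE oscillation over a window of length at most $\delta = h/R$: the integrated SDE and a Brownian moment bound give $\|\bL'_s - \bL'_{hU_{kj}}\|_{\Ltwo} \leqslant \delta\,G + \sqrt{2p\delta}$, where $G := \sup_{\tau\in[0,h]}\|\nabla f(\bL'_\tau)\|_{\Ltwo}$. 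A self-consistent Gronwall-type argument, valid under $\bar h \leqslant 0.1$, then controls $G$ by a linear combination of $\|\bg_k\|_{\Ltwo}$ and $\sqrt{hp}$. Assembling these pieces yields a recursion
\[
D_q \leqslant \bar h\, D_{q-1} + E, \qquad E = \tfrac{2M\delta}{3}\bigl(\sqrt{h\delta}\,\|\bg_k\|_{\Ltwo} + 2\sqrt{hp}\bigr),
\]
with the constants tuned to match the statement.

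For the base case $\bvartheta_k^{(0,r)} = \bvartheta_k$, so $D_0$ reduces to the $L_2$ displacement of $\bL'$ from $\bvartheta_k$ over $[0, hU_{kr}] \subseteq [0,h]$, which the integrated SDE bounds by $hU_{kr}\cdot G + \sqrt{2 h U_{kr}\,p}$. Averaging over $r$ and plugging in the Gronwall bound on $G$ produces $D_0 \leqslant 0.56\, h\,\|\bg_k\|_{\Ltwo} + 0.74\sqrt{2hp}$. Iterating the recursion $Q-1$ times,
\[
D_{Q-1} \leqslant \bar h^{Q-1} D_0 + \frac{E}{1-\bar h},
\]
and absorbing the factor $1/(1-\bar h) \leqslant 10/9$ (valid since $\bar h \leqslant 0.1$) into the constants of $E$ delivers the claimed inequality.

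The main obstacle will be the self-consistent bound on $G$, which is what ultimately fixes the explicit constants $0.56$ and $0.74$ in the statement: one must control $\|\bL'_\tau - \bvartheta_k\|_{\Ltwo}$ on the full interval $[0,h]$, a quantity that itself depends on $G$, and close the loop using $\bar h \leqslant 0.1$. Additional care is needed to treat the truncated last segment $[t_{r-1}, hU_{kr}]$, and to verify that the averaging over $r$ on the left-hand side of the recursion is compatible with iteration; each pointwise-in-$r$ term satisfies the same recursive bound, so taking the $r$-average at every step propagates cleanly into the averaged formulation of the lemma.
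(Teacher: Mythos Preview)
Your overall scaffolding (recursion in $q$, Gronwall control of $\|\nabla f(\bL'_\tau)\|_{\Ltwo}$, base case via the displacement bound on $\bL'$) matches the paper. However, there is a genuine gap in the recursion step that prevents you from reaching the stated $E$.

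You bound the intra-segment oscillation by applying the triangle inequality termwise:
\[
\Big\|\sum_{j=1}^{r-1}\int_{t_{j-1}}^{t_j}\bigl(\nabla f(\bL'_s)-\nabla f(\bL'_{hU_{kj}})\bigr)\,ds\Big\|_{\Ltwo}
\leqslant M\sum_{j=1}^{r-1}\int_{t_{j-1}}^{t_j}\|\bL'_s-\bL'_{hU_{kj}}\|_{\Ltwo}\,ds.
\]
Each summand is of order $M\delta(\delta\,G+\sqrt{2p\delta})$, and after summing over $j\leqslant r$ and averaging over $r$ you obtain a residual of order $\tfrac{Mh}{2}(\delta\|\bg_k\|_{\Ltwo}+\sqrt{2p\delta})$. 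This is a factor $\sqrt{R}$ larger than the target $E=\tfrac{2M\delta}{3}(\sqrt{h\delta}\,\|\bg_k\|_{\Ltwo}+2\sqrt{hp})$, since $\sqrt{h\delta}=h/\sqrt{R}$ and $M\delta\sqrt{hp}=Mh\sqrt{hp}/R$. With that extra $\sqrt{R}$ you cannot produce the constants in the lemma.

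The paper closes this gap by observing that the random vectors
\[
Z_j:=\int_{t_{j-1}}^{t_j}\bigl(\nabla f(\bL'_s)-\nabla f(\bL'_{hU_{kj}})\bigr)\,ds
\]
are, conditionally on the Brownian path, \emph{independent and centered}: since $U_{kj}$ is uniform on $[(j-1)/R,j/R]$, the conditional mean of $\nabla f(\bL'_{hU_{kj}})$ over that segment equals $\delta^{-1}\int_{t_{j-1}}^{t_j}\nabla f(\bL'_s)\,ds$. Hence $\|\sum_{j} Z_j\|_{\Ltwo}^2=\sum_j\|Z_j\|_{\Ltwo}^2$, which yields
\[
\Big\|\sum_{j=1}^{r-1} Z_j\Big\|_{\Ltwo}\leqslant M\delta\sqrt{r-1}\bigl(0.46\,\delta\|\bg_k\|_{\Ltwo}+0.9\sqrt{\delta p}\bigr).
\]
The $\sqrt{r-1}$ (rather than $r-1$) is exactly the $\sqrt{R}$ saving that, after averaging over $r$, produces the $\sqrt{h\delta}$ and the $M\delta\sqrt{hp}$ scalings in the statement. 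Without this orthogonality step your recursion is correct in form but the forcing term $E$ is too large by $\sqrt{R}$, so the final inequality does not follow.
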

\begin{proof}
Let $q\in\{1,2,\ldots,Q-1\}$. 
It was proved in Lemma 1 in~\cite{yu2023langevin} 
that for every $t\in[0,h]$,
\begin{equation}
\label{eq:prev11}
\|\bL'_t-\bL'_0\|_{\mathbb L_2}
\leqslant e^{Mt}\big({t^2\|\bg_k\|_{\Ltwo}^2+2tp}\big)^{1/2}
\leqslant 1.106\, t\|\bg_k\|_{\Ltwo} + 1.106\sqrt{2tp}.
\end{equation}
This implies that for every $t,s\in[0,h]$ such that $s<t$,
\begin{align}
    \|\bL'_t-\bL'_s\|_{\mathbb L_2}
    &   \leqslant \int_{s}^t \|\nabla f(\bL'_u)\|_{\mathbb L_2}\rmd u
    + \sqrt{2p(t-s)}\\
    &\leqslant (t-s)\|\bg_k\|_{\mathbb L_2} + M\int_s^t 
    \big(1.106\, u\|\bg_k\|_{\Ltwo} + 1.106\sqrt{2up}\big)\,\rmd u + 
    \sqrt{2(t-s)p}\\
    &\leqslant 1.12(t-s)\|\bg_k\|_{\mathbb L_2} + 1.08\sqrt{2(t-s)p}.
    \label{eq:prev12}
\end{align}
By the definition of $ \bvartheta_{k}^{(q,r)}$, taking into
account that $ha_{kjr} = \int_{(j-1)\delta}^{(j\delta)\wedge  
(hU_{kr})}1\rmd s\leqslant \delta$ and using the triangle 
inequality, we obtain
\begin{align}
    \Big\|\bvartheta_{k}^{(q,r)}-  \bL'_{hU_{kr}} \Big\|
	&= \bigg\|\sum_{j=1}^{r-1} \delta \bg_k^{(q-1,j)} + 
            \big(hU_{kr} - (r-1) \delta\big) \bg_k^{(q-1,r)} -
            \int_0^{hU_{kr}}\!\!\!\!\!\grad f(\bL'_{s})\,\rmd s 
            \bigg\| \\
        & \leqslant \bigg\|\sum_{j=1}^{r-1} \delta \big(
            \bg_k^{(q-1,j)} - \nabla f(\bL'_{hU_{kj}})\big)\bigg\|
            + \bigg\|\sum_{j=1}^{r-1}\int_{(j-1)\delta}^{j\delta}
            \!\!\big(\grad f(\bL'_{s}) - \grad 
            f(\bL'_{hU_{kj}}) \big)\,\rmd s\bigg\| \\
        &\qquad + \delta \big\|\big( \bg_k^{(q-1,r)} - \nabla f
            (\bL'_{hU_{kr}})\big)\big\| + \bigg\|\int_{(r-1) 
            \delta}^{hU_{kr}} \big(\nabla f(\bL'_s) - \nabla f(\bL'_{hU_{kr}})\big) \rmd s\bigg\|,
\end{align}
where for getting the last inequality we have split the integral 
over $[0, hU_{kr}]$ into the sum of integrals over the partition
$\{[0,\delta],[\delta,2\delta],\ldots, [(r-1)\delta, U_{kr}]\}$.
Using the Lipschitz property  of $\nabla f$, we get
\begin{align}
    \Big\|\bvartheta_{k}^{(q,r)}-  \bL'_{hU_{kr}} \Big\|
	&= \bigg\|\sum_{j=1}^{r-1} \delta \nabla f(
            \bvartheta_k^{(q-1,j)}) + \big(hU_{kr} - (r-1)
            \delta\big) \nabla f(\bvartheta_k^{(q-1,r)}) -
            \int_0^{hU_{kr}}\!\!\!\!\!\grad f(\bL'_{s})\,\rmd s 
            \bigg\| \\
        & \leqslant M\delta\sum_{j=1}^{r} \big\|
            \bvartheta_k^{(q-1,j)} - \bL'_{hU_{kj}}\big\|
            + \bigg\|\sum_{j=1}^{r-1}\int_{(j-1)\delta}^{j\delta}
            \!\!\big(\grad f(\bL'_{s}) - \grad 
            f(\bL'_{hU_{kj}}) \big)\,\rmd s\bigg\| \\
        &\qquad + M \int_{(r-1) 
            \delta}^{hU_{kr}} \big\|\bL'_s - \bL'_{hU_{kr}}\big\| 
            \rmd s.\label{lem4:eq1}
\end{align}
One easily checks that $Z_j:= \int_{(j-1)\delta}^{j\delta} \big(
\grad f(\bL'_{s}) - \grad f(\bL'_{hU_{kj}}) \big)\,\rmd s$ are independent
random variables with zero mean. Therefore,
\begin{align}
    \bigg\|\sum_{j=1}^{r-1}Z_j\bigg\|_{\mathbb L_2}^2
    &= \sum_{j=1}^{r-1}  \bigg\|\int_{(j-1)\delta}^{j\delta}
    \!\!\big(\grad f(\bL'_{s}) - \grad 
    f(\bL'_{hU_{kj}}) \big)\,\rmd s\bigg\|_{\mathbb L_2}^2\\
    &\leqslant M^2\delta\sum_{j=1}^{r-1}  \int_{(j-1)\delta}^{j\delta}
    \!\!\big\|\bL'_{s} - \bL'_{hU_{kj}} \big\|_{\mathbb L_2}^2\,\rmd s\\
    &\leqslant M^2\delta\sum_{j=1}^{r-1}  \int_{(j-1)\delta}^{j\delta}
    \bigg\{\big\|\int_{hU_{kj}}^s \nabla f(\bL'_{u})\rmd u\big\|_{\mathbb L_2}^2 + 2p\mathbb E[|hU_{kj}-s|^2]\bigg\}\,\rmd s\\
    &\leqslant M^2\delta\sum_{j=1}^{r-1}  \int_{(j-1)\delta}^{j\delta}
    \bigg\{\big\|\int_{hU_{kj}}^s \nabla f(\bL'_{u})\rmd u\big\|_{\mathbb L_2}^2 + 2p\delta^2/3\bigg\}\rmd s\\
    & \leqslant  M^2\delta(r-1) \Big\{\frac{\delta^3}6 \max_{v\in[0,h]}
    \|\nabla f(\bL'_{v})\|_{\mathbb L_2}^2  + \frac{2p\delta^2}3\Big\}.
\end{align}
The first inequality follows from the Cauchy-Schwarz Inequality. 
The first term of the last line above can be upper bounded using 
\eqref{lem:prev} as follows:
\begin{align}
    \big\| \nabla f(\bL'_{v})\big\|_{\Ltwo}
    &\leqslant  \| \bg_k\|_{\Ltwo} + \big\| 
    \nabla f(\bL'_{v}) - \nabla f(\bL'_{0})\big\|_{\Ltwo}
    \leqslant \| \bg_k\|_{\Ltwo} + M\big\| \bL'_{v} - 
    \bL'_{0}\big\|_{\Ltwo}\\
    &\leqslant \| \bg_k\|_{\Ltwo} + 1.106 Mv \|\bg_k\|_{\mathbb L_2} 
    + 1.106 M \sqrt{2vp}\\
    &\leqslant 1.12\| \bg_k\|_{\Ltwo}  + 0.35 \sqrt{2Mp},
\end{align}
where we have used the fact that $Mv\leqslant Mh\leqslant 0.1$. 
Therefore, 
\begin{align}
    \bigg\|\sum_{j=1}^{r-1}\int_{(j-1)\delta}^{j\delta}
            \!\!\big(\grad f(\bL'_{s}) &- \grad 
            f(\bL'_{U_{kj}}) \big)\,\rmd s\bigg\|_{\mathbb L_2} = 
    \bigg\|\sum_{j=1}^{r-1}Z_j\bigg\|_{\mathbb L_2}\\
    & \leqslant  M\sqrt{\delta (r-1)} \bigg\{\frac{\delta^{3/2}}{\sqrt{6}}
    (1.12\| \bg_k\|_{\Ltwo}  + 0.35 \sqrt{2Mp})  + \sqrt{\frac{2p\delta^2}3}\Big\}\\
    &\leqslant M\delta\sqrt{r-1}\big(0.46\delta\| \bg_k\|_{\Ltwo}  + 0.9 \sqrt{\delta p}\big).\label{lem4:eq2}
\end{align}
Similarly, in view of \eqref{eq:prev12},
\begin{align}
    \bigg\|\int_{(r-1) 
        \delta}^{hU_{kr}} \big\|\bL'_s - \bL'_{hU_{kr}}\big\| \,
        \rmd s\bigg\|_{\mathbb L_2}^2 &\leqslant  \frac1\delta
        \int_0^\delta \bigg(\int_0^u \|\bL'_{(r-1)\delta + u} -
        \bL'_{(r-1)\delta + v}\|_{\mathbb L_2}\,\rmd v\bigg)^2\,\rmd u \\
        &\leqslant  \frac1\delta
        \int_0^\delta \Big(\int_0^u\big\{1.12 |u-v| \|\bg_k\|_{\mathbb L_2} 
        + 1.08\sqrt{2|u-v|p}\big\}\,\rmd v\Big)^2\,\rmd u\\
        & = \frac1\delta
        \int_0^\delta \Big(0.56 u^2 \|\bg_k\|_{\mathbb L_2} 
        + 0.72 u\sqrt{2 u p}\Big)^2\,\rmd u\\
        &\leqslant \delta^2\Big(0.33\delta\|\bg_k\|_{\mathbb L_2} + 
        0.42\sqrt{2\delta p}\Big)^2.
\end{align}
Combining \eqref{lem4:eq1}, \eqref{lem4:eq2} and the last 
display, we get
\begin{align}
    \Big\|\bvartheta_{k}^{(q,r)}-  \bL'_{hU_{kr}} \Big\|_{\mathbb L_2}
	& \leqslant M\delta\sum_{j=1}^{r} \big\|
            \bvartheta_k^{(q-1,j)} - \bL'_{hU_{kj}}\big\|_{\mathbb L_2}
            + M\delta \sqrt{r-1} \big(0.46\delta\| \bg_k\|_{\Ltwo} 
            + 0.9 \sqrt{\delta p}\big)\\
        &\qquad + M \delta\big\{0.33\delta \| \bg_k\|_{\mathbb L_2} 
        + 0.42 \sqrt{2\delta p}\big\}.
\end{align}
Since $M\delta = Mh/R = \bar h/R$, it then follows that
\begin{align}
    \frac1R \sum_{r=1}^R\Big\|\bvartheta_{k}^{(q,r)} - \bL'_{hU_{kr}} 
    \Big\|_{\mathbb L_2}
	& \leqslant \frac{\bar h}{R}\sum_{j=1}^{R} \big\|
            \bvartheta_k^{(q-1,j)} - \bL'_{hU_{kj}}\big\|_{\mathbb L_2}
            + (2/3)M\delta \sqrt{R} \big(0.46\delta\| \bg_k\|_{\Ltwo} 
            + 0.9 \sqrt{\delta p}\big)\\
        &\qquad + M \delta\big\{0.33\delta \|
            \bg_k\|_{\mathbb L_2} + 0.42\sqrt{2\delta p}\big\}\\
        &\leqslant \frac{\bar h}{R}\sum_{j=1}^{R} \big\|
            \bvartheta_k^{(q-1,j)} - \bL'_{hU_{kj}}\big\|_{\mathbb L_2}
            +  0.6M\delta  \big\{\sqrt{h\delta} \| \bg_k\|_{\Ltwo}  
            + 2 \sqrt{hp}\big\}.
\end{align}
By induction over $q$, this inequality entails that
\begin{align}
    \frac1R \sum_{r=1}^R\Big\|\bvartheta_{k}^{(Q-1,r)} - \bL'_{hU_{kr}} 
    \Big\|_{\mathbb L_2}
	& \leqslant \frac{\bar h^{Q-1}}{R}\sum_{j=1}^{R} \big\|
            \bvartheta_k - \bL'_{hU_{kj}}\big\|_{\mathbb L_2}
            +  \frac23 M\delta  \big\{\sqrt{h\delta} \| \bg_k\|_{\Ltwo}  
            + 2 \sqrt{hp}\big\}\\
        & = \frac{\bar h^{Q-1}}{R}\sum_{j=1}^{R} \big\|
            \bL_0' - \bL'_{hU_{kj}}\big\|_{\mathbb L_2}
            +  \frac23 M\delta  \big\{\sqrt{h\delta} \| \bg_k\|_{\Ltwo}  
            + 2 \sqrt{hp}\big\}.
\end{align}
Using once again \eqref{eq:prev11}, we get 
\begin{align}
    \frac{\bar h^{Q-1}}{R}\sum_{j=1}^{R} \big\|
            \bL_0' - \bL'_{hU_{kj}}\big\|_{\mathbb L_2} &\leqslant
            \frac{\bar h^{Q-1}}{R}\sum_{j=1}^{R} 1.106\big\{(j\delta)
            \|\bg_k \|_{\mathbb L_2} + \sqrt{2j\delta p}\big\}\\
            &\leqslant
            {\bar h^{Q-1}}\big\{0.56 h \|\bg_k \|_{\mathbb L_2} +
            0.74\sqrt{2h p}\big\}.
\end{align}
This completes the proof of the lemma. 
\end{proof}
We now proceed with the proof of \Cref{lem:A2}. Throughout the following proofs, we denote by $\mathbb{E}_U$ the expectation where integration is performed with respect to the random variables $(U_{ki})_{1\leqslant i\leqslant R}$ only. In other words, $\mathbb{E}_U$ represents the conditional expectation given all random variables except $(U_{ki})_{1\leqslant i\leqslant R}$. 
\begin{proof}[Proof of \Cref{lem:A2}]
We note that
    \begin{align}
        \|\bar\bvartheta_{k+1}-\bL'_h\|_{\Ltwo} & =
        \bigg\|\bvartheta_{k}-\mathbb E_U\Big[\sum_{i=1}^R \delta \grad f (\bvartheta_k^{(Q-1,i)})\Big]
        -\bL'_0 +\int_0^h \nabla f(\bL'_s)\,\rmd s \bigg\|_{\Ltwo}\\
        & = \bigg\| \sum_{i=1}^R \delta \mathbb E_{U}\Big[\nabla f(\bvartheta_{k}^{(Q-1,i)}) - \grad f(\bL'_{hU_{ki}}) \Big] \bigg\|_{\Ltwo}
        \leqslant  \frac{\bar h}{R}\sum_{i=1}^R  \Big\|
        \bvartheta_{k}^{(Q-1,i)}-  \bL'_{hU_{ki}} \Big\|_{\Ltwo} \,.
        \label{eq:1}
    \end{align}    
Plugging \Cref{lem:A0} into 
the previous display then provides us with
\begin{align}
    \|\bar\bvartheta_{k+1}-\bL'_h\|_{\Ltwo}
    \leqslant {\bar h^{Q}}\big\{0.56 h \|\bg_k \|_{\mathbb L_2} +
    0.74\sqrt{2h p}\big\} + \frac{2\bar h^2}{3R}  \big\{\sqrt{h\delta} \| \bg_k\|_{\Ltwo}  + 2 \sqrt{hp}\big\}.
\end{align}
as desired.
\end{proof}

\begin{proof}[Proof of \Cref{lem:A1}]

By the definition of $\bvartheta_{k+1},\delta,$ and the fact that the mean minimizes the squared integrated error, we find
\begin{align}
\|\bvartheta_{k+1} - \bar\bvartheta_{k+1} \|_{\Ltwo}
    & =  \bigg\| \delta\sum_{r=1}^R\bg_k^{(Q-1,r)}- \delta\sum_{r=1}^R\E_U[\bg_k^{(Q-1,r)}] \bigg\|_{\Ltwo} \\
    & \leqslant  \delta\sum_{r=1}^R \big\| \bg_k^{(Q-1,r)}
    - \E_U[\bg_k^{(Q-1,r)}] \big\|_{\Ltwo} \\
    & \leqslant  \delta\sum_{r=1}^R\big\|
    \bg_k^{(Q-1,r)} - \grad f(\bL'_{(r-1)\delta}) 
    \big\|_{\Ltwo} \\
    &\leqslant \frac{\bar h}{R}\sum_{r=1}^R \big\| 
    \bvartheta_k^{(Q-1,r)}-\bL'_{(r-1)\delta} \big\|_{\Ltwo} \,.
\label{eq:helper2}
\end{align}
To obtain the upper bound for the display~\eqref{eq:helper2}, 
we derive an upper bound for $\big\|\bvartheta_k^{(Q,r)}
-\bL'_{(r-1)\delta}\big\|_{\Ltwo}$. On the one hand, the triangle 
inequality yields
\begin{align}
    \big\|\bvartheta_k^{(Q-1,r)} - \bL'_{(r-1)\delta}\big\|_{\Ltwo}
    &\leqslant \big\|\bvartheta_k^{(Q-1,r)}-\bL'_{hU_{kr}} 
    \big\|_{\Ltwo} + \big\|\bL'_{hU_{kr}} - \bL'_{(r-1)\delta}
    \big\|_{\Ltwo}.\label{A.y1}
\end{align}
On the other hand, in view of \eqref{eq:prev12}, for a random 
variable $U$ uniformly distributed in $[0,1]$, we have
\begin{align}
    \big\|\bL'_{hU_{kr}} - \bL'_{(r-1)\delta}
    \big\|_{\Ltwo}
    &\leqslant  \E\big[\big(1.12\delta U\|\bg_k\|_{\mathbb L_2} 
    + 1.08\sqrt{2U\delta p}\big)^2\big]^{1/2}\\
    &\leqslant 1.12\delta \|\bg_k\|_{\mathbb L_2}/\sqrt{3} + 1.08\sqrt{\delta p}\,.\label{A.y2}
\end{align}
Therefore, putting together \eqref{eq:helper2}, \eqref{A.y1} and 
\eqref{A.y2} we arrive at
\begin{align}
\|\bvartheta_{k+1} - \bar\bvartheta_{k+1} \|_{\Ltwo}
    &\leqslant \frac{\bar h}{R}\sum_{r=1}^R \big\| 
    \bvartheta_k^{(Q-1,r)}-\bL'_{hU_{kr}} \big\|_{\Ltwo}  + 
    0.65 \delta \bar h\|\bg_k\|_{\mathbb L_2}  + 
    1.08\bar h\sqrt{\delta p}    \,.
\end{align}
Plugging \Cref{lem:A0} into the previous display then 
provides us with
\begin{align}
\|\bvartheta_{k+1} - \bar\bvartheta_{k+1} \|_{\Ltwo}
    &\leqslant \bar h^Q(0.56h \|\bg_k\|_{\mathbb L_2} + 0.74 
    \sqrt{2hp}) + \frac{2\bar h^2}{3R}(\sqrt{h\delta} 
    \|\bg_k\|_{\mathbb L_2} + 2\sqrt{h p})\\
    &\qquad + 
    0.65 \delta \bar h\|\bg_k\|_{\mathbb L_2}  + 
    1.08\bar h\sqrt{\delta p} \\
    & \leqslant \bar h^Q(0.56h \|\bg_k\|_{\mathbb L_2} + 1.05 
    \sqrt{hp}) + 
    0.7\, \bar h\big(\delta \|\bg_k\|_{\mathbb L_2}  + 
    1.7\sqrt{\delta p}\big)
\end{align}
and the claim of the lemma follows. 
\end{proof}

}

\begin{proof}[Proof of \Cref{lem:A3}] We need the following auxiliary lemma  from~\cite{yu2023langevin}.

\begin{lemma}[Lemma 3 in~\cite{yu2023langevin}]
\label{lem:prev}
    Suppose $\omega = (\omega_n)_{n\in \mathbb N}$ is a
    real valued sequence. Define $S_n 
    (\omega) := \sum_{k = 0}^{n} \varrho^{n-k} 
    \omega_{k}$ and $S_n^{+1} 
    (\omega) := \sum_{k = 0}^{n} \varrho^{n-k} 
    \omega_{k+1}$. Then, $
        S_n^{+1} (\omega) = \omega_{n+1} - \varrho^{n+1} 
        \omega_0 + \varrho S_n(\omega)$.
\end{lemma}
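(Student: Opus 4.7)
The plan is to prove this identity by a straightforward reindexing argument, since the statement is a purely algebraic identity about finite geometrically weighted sums with no probabilistic or analytic content. First I would apply the change of variable $j = k+1$ in the definition of $S_n^{+1}(\omega)$, which yields
\[
S_n^{+1}(\omega) = \sum_{k=0}^n \varrho^{n-k}\omega_{k+1} = \sum_{j=1}^{n+1}\varrho^{n+1-j}\omega_j,
\]
and then isolate the boundary term corresponding to $j=n+1$ to get $S_n^{+1}(\omega) = \omega_{n+1} + \sum_{j=1}^n \varrho^{n+1-j}\omega_j$.

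Next I would multiply the defining expression of $S_n(\omega)$ by $\varrho$ and isolate the $k=0$ boundary term, obtaining
\[
\varrho S_n(\omega) = \sum_{k=0}^n \varrho^{n+1-k}\omega_k = \varrho^{n+1}\omega_0 + \sum_{k=1}^n \varrho^{n+1-k}\omega_k.
\]
The two middle sums, both running over $\{1,\ldots,n\}$ with identical weights $\varrho^{n+1-k}$, match term by term. Taking the difference $S_n^{+1}(\omega) - \varrho S_n(\omega)$ therefore leaves exactly $\omega_{n+1} - \varrho^{n+1}\omega_0$, and rearranging gives the claimed identity.

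There is no substantive obstacle here: this is an elementary bookkeeping lemma imported as an auxiliary tool from \cite{yu2023langevin}, and its proof reduces to one change of variables plus the extraction of two boundary terms. The only point worth minor care is keeping the exponent of $\varrho$ and the subscript of $\omega$ consistent when shifting the summation index, since both are affected simultaneously by the substitution $j=k+1$.
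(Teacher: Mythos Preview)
Your proof is correct. The paper does not actually prove this lemma; it is quoted verbatim as Lemma~3 from~\cite{yu2023langevin} and used as an auxiliary tool, so there is no proof in the paper to compare against, and your reindexing argument is the natural and complete verification of the identity.
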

Recall the notation
$\bxi_k = \sqrt{2}\,\bar\bW_{h}$ and note that 
    \begin{align}
        f_{k+1} &\leqslant f_k + 
        \bg_k^\top (\bvartheta_{k+1} -
        \bvartheta_k) + \frac{M}{2}\|\bvartheta_{k+1} -
        \bvartheta_k\|_2^2\\
        &\leqslant f_k  -
        \bg_k^\top\bigg(\sum_{i=1}^R \delta \grad f (\bvartheta_k^{(Q-1,i)})  -
        \bxi_k  \bigg)+ \frac{M}{2}\bigg\|\sum_{i=1}^R \delta \grad f (\bvartheta_k^{(Q-1,i)})- \bxi_k\bigg\|^2\\
        & = f_k 
        +  \bg_k^\top\bigg( \sum_{i=1}^R \delta \big(\bg_k- \bg_k^{(Q-1,i)}\big) -h\bg_k\bigg)+   \bg_k^\top\bxi_k + \frac{M}{2} \bigg\|\delta \sum_{i=1}^R \bg_k^{(Q-1,i)} - \bxi_k\bigg\|^2\\
        &\leqslant f_k  -h \|\bg_k\|^2 + 
        \delta  \|\bg_k\| \sum_{i=1}^R\Big\|\bg_k - \bg_{k}^{(Q-1,i)}\Big\| + 
        \bg_k^\top\bxi_k + \frac{M}{2}\bigg\|\delta\sum_{i=1}^R \bg_k^{(Q-1,i)}- \bxi_k\bigg\|^2
        \label{eq:n1}
    \end{align}
    We first derive the bound for $\sum_{i=1}^R \big\|\bg_k 
    - \bg_{k}^{(Q-1,i)}\big\|_{\mathbb L_2}$. For every $q\in\mathbb N$, one can check that
    \begin{align}
    \big\|\bg_k - \bg_{k}^{(q,i)}\big\|_{\mathbb L_2}
    & \leqslant  M\big\|\bvartheta_k - \bvartheta_{k}^{(q,i)}\big\|_{\mathbb L_2} 
    =  M\bigg\| \sum_{j=1}^i ha_{kjr} \grad f(\bvartheta_k^{(q-1,j)})
    -\bxi_{ki}\bigg\|_{\mathbb L_2} \\
    &\leqslant M \sum_{j=1}^i ha_{kjr} \big\|\bg_k^{(q-1,j)}
    \|_{\mathbb L_2}  + M\|\bxi_{ki}\|_{\mathbb L_2}\\
    &\leqslant M\delta \sum_{j=1}^R  \big\|\bg_k^{(q-1,j)} 
    -\bg_k\big\|_{\mathbb L_2}
    + Mi\delta \|\bg_k\|_{\mathbb L_2} + M\sqrt{(2i-1)\delta p}\,.
    \end{align}
Since $M\delta  = {\bar h}/{R}$, the last inequality
implies
\begin{align}
\frac{1}{R} \sum_{j=1}^R\big\|\bg_{k}^{(q,j)}-\bg_k\big\|_{\mathbb L_2}
&\leqslant  \frac{\bar h}{R} \sum_{j=1}^R  \big\|  \bg_k^{(q-1,j)}-\bg_k\big\|_{\mathbb L_2}
    + \frac{R+1}{2R} \bar h\|\bg_k\|_{\mathbb L_2} + M\Big(\frac23 + \frac1R\Big)\sqrt{2-\frac1{R}}\sqrt{hp}\\
    &\leqslant  \frac{\bar h}{R} \sum_{j=1}^R  \big\|  \bg_k^{(q-1,j)}-\bg_k\big\|_{\mathbb L_2}
    + \frac{3}{4} \bar h\|\bg_k\|_{\mathbb L_2} + 
    \frac{7}{12} M\sqrt{6hp}.
\end{align}
When $R\geqslant 2$ and $\bar h\leqslant 1/10$, unfolding 
the previous recursive inequality, we arrive at
\begin{align}
\frac{1}{R} \sum_{j=1}^R\big\|\bg_{k}^{(Q-1,j)}-\bg_k\big\|_{\mathbb L_2}
&\leqslant \frac{10}{9}\Big(\frac{3}{4}\bar h\|\bg_k\|_{\mathbb L_2} + \frac{7M}{12} \sqrt{6hp}\Big)
= \frac{5}{6} Mh\|\bg_k\|_{\mathbb L_2} + \frac{70M}{108} \sqrt{6hp}.
\label{eq:diff}
\end{align}
Hence,
\begin{align}
     \delta \|\bg_k\|_{\mathbb L_2} \sum_{i=1}^R\big\|\bg_{k}^{(Q-1,i)}-\bg_k\big\|_{\mathbb L_2} 
     &\leqslant \frac{5}{6} Mh^2 \|\bg_k\|^2_{\mathbb L_2}
     + {h}\Big( \frac{h\|\bg_k\|^2 }{2} + \big(\frac{70}{108}\big)^2 3p\Big)\\
     &\leqslant \frac{4}{3} h^2 \|\bg_k\|^2_{\mathbb L_2}
     + \frac43h p.
     \label{eq:3}
\end{align}
Furthermore, we have
\begin{align}
    \bigg\|\delta\sum_{i=1}^R \bg_k^{(Q-1,i)} - \bxi_k\bigg\|_{\mathbb L_2}
    &\leqslant\sum_{i=1}^R \delta\big\|\bg_k^{(Q-1,i)}-\bg_k\big\|_{\mathbb L_2}  + \| h\bg_k-\bxi_k\big\|_{\mathbb L_2} \\
    &\leqslant \delta\sum_{i=1}^R \big\|\bg_k^{(Q-1,i)} - \bg_k
    \|_{\mathbb L_2} + \sqrt{h^2\|\bg_k\|_{\mathbb L_2}^2 + 
    \| \bxi_k\big\|_{\mathbb L_2}^2}\\
    &
    \leqslant \frac{h}{R}  \sum_{j=1}^R\big\|\bg_{k}^{(Q-1,j)}-\bg_k\big\|_{\mathbb L_2} + \sqrt{h^2\|\bg_k\|_{\mathbb L_2}^2 + 2hp}.
    \end{align}
Combining this with the display~\eqref{eq:diff} gives
\begin{align}
\frac{M}{2}\bigg\|\delta \sum_{i=1}^R \bg_k^{(Q-1,i)} - 
\bxi_k\bigg\|^2_{\mathbb L_2}
&\leqslant \frac{M}{2}\Big( \frac56 Mh^2\|\bg_{k}\|_{\mathbb L_2} + 
\frac{70}{108} Mh\sqrt{6hp}
+ \sqrt{h^2\|\bg_k\|^2_{\mathbb L_2}+2hp}\Big)^2\\
&\leqslant \frac{M}{2}\Big( \frac1{12} h\|\bg_{k}\|_{\mathbb L_2} + \frac{7}{108}\sqrt{6hp}
+ \sqrt{h^2\|\bg_k\|^2_{\mathbb L_2}+2hp}\Big)^2\\
&\leqslant \frac{M}{2}\Big( \sqrt{(\tfrac1{12})^2 + 3(\tfrac{7}{108})^2}+1\Big)^2(h^2\|\bg_k\|^2+2hp)\\
&\leqslant \frac23 Mh^2 \|\bg_k\|^2+ \frac43 Mhp
\,.
\label{eq:4}
\end{align}
Plugging the displays~\eqref{eq:3} and~\eqref{eq:4} back into the display~\eqref{eq:n1} and taking the expectation, we get
\begin{align}\label{eq:n2}
\mathbb E[f_{k+1}] &\leqslant 
\mathbb E[f_k] - h\|\bg_k\|_{\Ltwo}^2 +  \frac43 Mh^2
\|\bg_k\|_{\Ltwo}^2 + \frac43Mhp + \frac23 Mh^2 \|\bg_k\|_{\Ltwo}^2
+\frac43 Mhp\\
&\leqslant 
\mathbb E[f_k] - 0.8 h\|\bg_k\|_{\Ltwo}^2 
+ (8/3)Mhp.
\end{align}
    
Let $f^* = \min_{\btheta\in\mathbb R^p} f(\btheta)$.  Set $S_n(f) = \sum_{k=0}^n \rho^{n-k} \mathbb E[f_k-f^*]$, 
$S_n(\bvartheta^2) = \sum_{k=0}^n \rho^{n-k} \|
    \bvartheta_k-\btheta^*\|_{\Ltwo}^2$ and 
    $S_n(g^2) = \sum_{k=0}^n \rho^{n-k} \|
    \bg_k\|_{\Ltwo}^2$. 
Using \Cref{lem:prev}, we get
    \begin{align}
        \mathbb E[f_{n+1} -f^*] - \rho^{n}\mathbb E[f_0] 
        + \rho S_n(f) \leqslant S_n(f) - 0.8 h
        S_n(\bg^2) + \frac{(8/3) Mhp}{1-\rho}
    \end{align}
    Since $mh\geqslant 1-\rho\geqslant 0.95 mh$, we get
    \begin{align}
        0.8 h
        S_n(\bg^2) &\leqslant \rho^{n}\mathbb E[f_0-f^*] 
        + (1-\rho)  S_n(f)  + {2.81 \kappa p}\\
        &\leqslant \rho^{n}\mathbb E[f_0-f^*] 
        + mh  S_n(f)  + {2.81  \kappa p}\\
        &\leqslant \rho^{n}\mathbb E[f_0-f^*] 
        + 0.5 h  S_n(\bg^2)  + {2.81  \kappa p}\,,
    \end{align}
    where the last line follows from the Polyak-Lojasiewicz inequality. Rearranging the terms, we get 
    \begin{align}\label{eq:n3}
        hS_n(\bg^2) &\leqslant 
        (10/3)\rho^{n}\mathbb E[f_0-f^*] 
        +  {9.37\kappa p}\,.
    \end{align}
    Note that \eqref{eq:n3} is obtained under the 
    Polyak-Lojasiewicz condition, without explicitly 
    using the strong convexity of $f$. 
    \end{proof}

\clearpage

\section{Proof of the upper bound on the error of 
parallelized RKLMC}\label{app:proof-rklmc}

In this section, we provide the 
proof of the upper bound on the error of parallelization of RKLMC.

Consider the underdamped Langevin diffusion
\begin{align}
\label{eq:uld}
\rmd \bL_t= \bV_t\,\rmd t,\qquad \text{where}
\qquad
\rmd\bV_t=- \gamma\bV_t\,\rmd t - \gamma\nabla f(\bL_t)\,\rmd t 
+\sqrt{2}\gamma\,\rmd \bW_t
\end{align}
for every $t\geqslant 0$, with  given initial
conditions $\bL_0$ and $\bV_0$. Throughout this section, 
we assume that $\bV_0\sim\mathcal N_p(\mathbf{0},\gamma\bfI_p)$ 
is independent of $\bL_0$, and the couple $(\bV_0,\bL_0)$
is independent of the Brownian motion $\bW$. We also
assume that $\bL_0$ is drawn from the target distribution
$\pi$; this implies that the process $(\bL_t,\bV_t)$ is
stationary.

In the sequel, we assume $\kappa\geqslant 10$, and we use the following shorthand notation 
\begin{align}
\delta =h/R,\qquad
    \eta = \gamma h,\qquad 
    f_n = f(\bvartheta_n),\qquad 
    \bg_n = \grad f(\bvartheta_n),\qquad 
   \bg_n^{(q,r)}=\grad f(\bvartheta_n^{(q,r)}).
\end{align}
We suppress the superscript $\sf pRKLMC$ for ease of the notation, and we rewrite these relations in the 
shorter form
\begin{align}
    \bvartheta_{n}^{(q,i)} &= \bvartheta_n + U_{ni}h\bar\alpha_{1i}\bv_n
    -\sum_{j=1}^i\int_{(j-1)\delta}^{\min(j\delta,U_{ni}h)}\big(1-e^{-\gamma(U_{ni}h-s)}\big)\,\rmd s \grad f\big(\bvartheta_n^{(q-1,j)}\big)\\
    &\qquad +U_{ni}h\sqrt{2U_{ni}\gamma \eta}\,\bar\sigma_{1i}\bar\bxi_{n,i},~~~q=1,\dots,Q-1,~i=1,\dots,R
    \label{eq:thetaU}\\
    \bvartheta_{n+1} &= \bvartheta_n + h\bar\alpha_2 \bv_n 
    -\sum_{i=1}^R\delta \big(1-e^{-\gamma h(1-U_{ni})}\big)\grad f\big(\bvartheta_n^{(Q-1,i)}\big)
    +h\sqrt{2\gamma\eta}\,\bar\sigma_2 \bxi_{n,1}
    \label{eq:thetan+1}\\
    \bv_{n+1} & = \bv_n -\eta \bar\alpha_2 \bv_n 
    - \gamma \sum_{i=1}^R\delta e^{-\gamma h(1-U_{ni})}\grad f \big(\bvartheta_n^{(Q-1,i)}\big)+ \sqrt{2\gamma\eta}
    \,\bar\sigma_3\bxi_{n,2}\label{eq:vn+1}
\end{align}
where the random vectors $\{\bar\bxi_{n,i}\}_{i=1}^{R},\bxi_{n,1},\bxi_{n,2}$ follow the standard Gaussian distribution $\mathcal{N}_p(\mathbf{0},\mathbf{I}_p)$ and are independent of $(\bvartheta_n,\bv_n)$,  $ \bar\alpha_{1i}$, %
$\bar\alpha_2$, 
and $\bar\sigma_{1i}$ are positive random variables
(with randomness inherited from $U_{ni}$ only)
satisfying
\begin{align}
\bar\alpha_{1i}\leqslant 1,\qquad
    \bar\alpha_2\leqslant 1, \qquad
    \bar\sigma_{1i}^2 \leqslant 1,\qquad
    \bar\sigma_2^2 \leqslant 1,\qquad
    \bar\sigma_3^2 \leqslant 1.
\end{align}
We define
\begin{align}
    \bar\bv_{n+1}:= \mathbb E[\bv_{n+1}],\qquad
    \bar\bvartheta_{n+1} := \mathbb E[\bvartheta_{n+1}]. 
\end{align}

The solution to SDE~\eqref{eq:uld} starting from 
$(\bv_n,\bvartheta_n)$ at the $n$-th iteration at 
time $h$ admits the following integral formulation
\begin{align}
    \bL'_t &=\bvartheta_n
    +\int_0^t\bV'_s\rmd s\\
    \bV'_t &=\bv_ne^{-\gamma t}
    -\gamma \int_0^t e^{-\gamma(t-s)}\nabla f(\bL'_s)\,\rmd s
    +\sqrt{2}\,\gamma\int_0^t e^{-\gamma(t-s)}\rmd\bW_{nh+s}\,.
    \label{eq:exactsol}
\end{align}
These expressions will be used in the proofs provided 
in the present section. %

We slightly abuse the notation here. Unlike the previous section, throughout this section, we will use the notation
\boxed{\hbox{${\bar h} = \sqrt{\gamma M}\, h$}}, which significantly 
simplifies formulas. 
We will also omit the iteration index $n$ for $U_{ni}$ and write it as $U_i$ whenever there is no risk of confusion. For simplicity of the calculations and notations, we assume throughout this section that $\arg\min_{\btheta\in\mathbb R^p} f (\btheta)=\mathbf{0}$ and $\min_{\btheta\in\mathbb R^p} f(\btheta)=0$.

\textbf{Proof overview}:
Our proof extends the approach of Theorem~2 in~\cite{yu2023langevin}, which bounds the mean discretization error and the deviation of both $\bvartheta_n$ and $\bv_n$. To adapt this analysis to our parallel computing framework, we partition the time interval $[0,h]$ into $R$ equal subintervals $[\frac{(r-1)h}{R}, \frac{rh}{R}]$, where $r = 1,\dots,R$. As shown in \Cref{lem:2}, we then compute the mean discretization error and the deviations for each of these subintervals.

\subsection{Auxiliary lemmas}

We start with some technical results required to prove Theorem~\ref{thm:rklmc}. They mainly 
assess the discretization error as well
as discounted sums of squared gradients and
velocities.

\begin{lemma}[Discretization error and deviation]%
\label{lem:2}
Let $(\bL'_t,\bV'_t)$ be the exact solution of the 
kinetic Langevin diffusion starting from $(\bvartheta_n,\bv_n)$. If $\gamma \geqslant 5M$ 
and $\gamma h
\leqslant 0.1$, it holds that
{
\begin{align}
\gamma\|\bar\bvartheta_{n+1}-\bL'_h\|_{\Ltwo} 
&\leqslant  
\frac{\gamma \bar h^4}{\sqrt{R}}   \big(1.29\delta \|\bv_n\|_{\Ltwo}
+1.29\gamma h\delta \|\bg_n\|_{\Ltwo}
+1.36\gamma\delta\sqrt{\delta p}\big)
   \\ 
    &\qquad + {\bar h}^{2Q}\gamma h 
    \Big(1.01\|\bv_n\|_{\Ltwo} + 0.51\gamma h\|\bg_n\|_{\Ltwo}
   +0.82\gamma\sqrt{hp} \Big)\\
\gamma \|\bvartheta_{n+1}-\bar\bvartheta_{n+1}\|_{\Ltwo}
    &\leqslant \gamma^2 h\delta \Big(0.22 \|\bv_n\|_{\Ltwo} +
    1.02\|\bg_n\|_{\Ltwo}
   + 0.05\gamma\sqrt{\delta p}\Big)\\
   &\qquad + (\gamma h)^{Q}\Big(0.01\|\bv_n\|_{\Ltwo}
   + 0.01 \|\bg_n\|_{\Ltwo}
   + 0.01 \gamma\sqrt{hp} \Big)\\
\|\bar \bv_{n+1} -\bV'_h\|_{\Ltwo}
&\leqslant \frac{{\bar h^2} \gamma Mh}{\sqrt{R}}  \big(1.29\delta \|\bv_n\|_{\Ltwo}
+1.29\gamma h\delta \|\bg_n\|_{\Ltwo}
+1.36\gamma\delta\sqrt{\delta p}\big)\\
   &\qquad + {\bar h}^{2Q} \Big(1.01\|\bv_n\|_{\Ltwo}
   + 0.51\gamma h\|\bg_n\|_{\Ltwo}
   +0.82\gamma\sqrt{hp} \Big)\\  
 \|\bv_{n+1}-\bar \bv_{n+1} \|_{\Ltwo}
    & \leqslant \gamma^2 h\delta  \Big(0.22\|\bv_n\|_{\Ltwo}+1.02 \|\bg_n\|_{\Ltwo} + 0.05\gamma\sqrt{\delta p}\Big)\\
   &\qquad + {(\gamma h)}^{Q}\Big(0.01\|\bv_n\|_{\Ltwo}
   + 0.01\|\bg_n\|_{\Ltwo} + 0.01\gamma\sqrt{hp} \Big)
    \,. 
\end{align}
}
\end{lemma}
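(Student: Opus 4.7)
The plan is to mirror the strategy developed for the vanilla case in Lemmas~\ref{lem:A0} and~\ref{lem:A1}, adapted to the kinetic setting where the integral kernels carry exponential weights $e^{-\gamma(t-s)}$ and every iterate couples $\bvartheta$ and $\bv$. All four claims will follow from a single master estimate on
\[
E_q := \frac{1}{R}\sum_{r=1}^R \bigl\|\bvartheta_n^{(q,r)}-\bL'_{hU_{nr}}\bigr\|_{\Ltwo}, \qquad q=0,1,\dots,Q-1,
\]
which I establish by induction on $q$. A preliminary step is to derive short-time oscillation bounds for the exact kinetic diffusion $(\bL'_t,\bV'_t)$ started at $(\bvartheta_n,\bv_n)$: using the integral formulation~\eqref{eq:exactsol}, the $M$-Lipschitzness of $\nabla f$, and the standing conditions $\gamma\geqslant 5M$ and $\gamma h\leqslant 0.1$, one obtains upper bounds on $\|\bL'_t-\bvartheta_n\|_{\Ltwo}$ and $\|\bV'_t-\bv_n\|_{\Ltwo}$ as linear combinations of $\|\bv_n\|_{\Ltwo}$, $\|\bg_n\|_{\Ltwo}$, and $\sqrt{p}$ with explicit small coefficients. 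By the Lipschitz property these yield pointwise control of $\|\nabla f(\bL'_s)-\nabla f(\bL'_{hU_{nr}})\|_{\Ltwo}$ on short subintervals.

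The core step is to derive, by substituting the definition~\eqref{eq:thetaU} and the integral formula for $\bL'_{hU_{nr}}$ and applying the triangle inequality, an inequality of the form
\[
\bigl\|\bvartheta_n^{(q,r)}-\bL'_{hU_{nr}}\bigr\|_{\Ltwo} \leqslant M\delta\sum_{j=1}^{r}\bigl\|\bvartheta_n^{(q-1,j)}-\bL'_{hU_{nj}}\bigr\|_{\Ltwo} + D_r,
\]
where $D_r$ collects (a) a sum of mean-zero independent increments $\int_{(j-1)\delta}^{j\delta}(\cdots)(\nabla f(\bL'_s)-\nabla f(\bL'_{hU_{nj}}))\rmd s$, whose $\Ltwo$-norm is controlled by an independence/variance argument mirroring~\eqref{lem4:eq2}, and (b) a boundary integral on $[(r-1)\delta,hU_{nr}]$ handled by the short-time oscillation bounds. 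Averaging over $r$ and using $M\delta=\bar h^2/(\gamma R)$ yields a linear recursion of the form $E_q\leqslant \bar h^2\, E_{q-1}+D^\star/\sqrt{R}$, where $D^\star$ depends linearly on $\|\bv_n\|_{\Ltwo}$, $\|\bg_n\|_{\Ltwo}$, and $\sqrt{\delta p}$; the factor $1/\sqrt{R}$ emerges from combining $R$ independent subinterval contributions via Cauchy--Schwarz, as in the vanilla proof. Unfolding $Q-1$ times and controlling $E_0$ by the oscillation bound gives the master estimate on $E_{Q-1}$, with a leading term of order $\bar h^{2Q}$ plus an accumulated discretization contribution of order $\bar h^2/\sqrt{R}$.

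Plugging this master estimate into the four target quantities is mostly routine. For $\bar\bvartheta_{n+1}-\bL'_h$ and $\bar\bv_{n+1}-\bV'_h$, I average~\eqref{eq:thetan+1} and~\eqref{eq:vn+1} over $U_{nr}$, subtract the corresponding exact integrals from~\eqref{eq:exactsol}, and bound the remainder by $E_{Q-1}$ multiplied by $\gamma\delta$ times the relevant kernel ($1-e^{-\gamma h(1-U_{nr})}$ for the position, $\gamma e^{-\gamma h(1-U_{nr})}$ for the velocity), plus a discretization residual controlled as $D_r$ was. For the centered deviations $\bvartheta_{n+1}-\bar\bvartheta_{n+1}$ and $\bv_{n+1}-\bar\bv_{n+1}$, I exploit that the conditional mean minimizes the squared $\Ltwo$-distance, which lets me replace $\mathbb E_U[\bg_n^{(Q-1,r)}]$ by the deterministic proxy $\nabla f(\bL'_{(r-1)\delta})$ before invoking $E_{Q-1}$ together with a short-interval oscillation bound on $\|\bL'_{hU_{nr}}-\bL'_{(r-1)\delta}\|_{\Ltwo}$, the kinetic analogue of~\eqref{A.y2}.

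The main obstacle is keeping the numerical constants sharp throughout. Every estimate must track three simultaneous contributions (from $\bv_n$, $\bg_n$, and the Brownian noise), and the assumption $\gamma\geqslant 5M$ has to be invoked repeatedly --- both to convert $M\delta$ into $\bar h^2/(\gamma R)$ and to absorb higher-order terms into the geometric series --- so as to recover the explicit coefficients $1.29$, $1.01$, $0.22$, etc., stated in the lemma.
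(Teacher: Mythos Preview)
Your plan matches the paper's argument: the master estimate on $E_q$ is exactly Lemma~\ref{lem:9}, and the four claims are derived from it as you describe, with the deviation bounds via the proxy $\nabla f(\bL'_{(r-1)\delta})$ packaged separately as Lemma~\ref{lem:dev1}. One minor slip to fix: in the kinetic case the kernel weight $1-e^{-\gamma(U_{r}h-s)}\leqslant\gamma h$ contributes an extra factor $\gamma h$, so the one-step coefficient in your displayed recursion should be $M\gamma h\delta$ rather than $M\delta$, and the identity producing the averaged recursion is $M\gamma h\delta\cdot R=\gamma Mh^2=\bar h^2$ --- your stated $M\delta=\bar h^2/(\gamma R)$ is off by a factor $h$, though the recursion $E_q\leqslant\bar h^2 E_{q-1}+D^\star/\sqrt{R}$ you arrive at is the correct one.
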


\begin{proposition}\label{prop:4}
    If $\gamma \geqslant 5 M$ and $\eta \leqslant 0.1$, then, for any $n\in\mathbb N$, the iterates
    of the pRKLMC satisfy
    \begin{align}
    \eta\sum_{k=0}^{n}\varrho^{n-k}
        \|\bv_{k}\|_{\Ltwo}^2 &\leqslant   6.76\varrho^{n}\gamma \mathbb E[f_0] +  
     1.52(x_n + 1.5\sqrt{\gamma p})^2 + \frac{5.05\gamma^2 p}{m},\\
     \eta\sum_{k=0}^{n}\varrho^{n-k}
        \|\bg_{k}\|_{\Ltwo}^2 &\leqslant
        7.71\varrho^{n}\gamma \mathbb E[f_0] +  
     5.33(x_n + 1.5\sqrt{\gamma p})^2 + \frac{4.39\gamma^2 p}{m},
\end{align}
where $f_0=f(\bvartheta_0),\varrho=\exp(-mh)$ and $x_n = 
       \big(\norm{\bv_n- \bV_{nh}}^2_{\Ltwo}
       +
      \norm{\bv_n- \bV_{nh}+\gamma(  \bvartheta_n-\bL_{nh})}_{\Ltwo}^2\big)^{1/2}$.
\end{proposition}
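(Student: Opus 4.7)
\textbf{Proof plan for Proposition~\ref{prop:4}.}
The plan is to mimic the strategy used for Lemma~\ref{lem:A3} in the pRLMC section, but for the joint state $(\bvartheta_k,\bv_k)$. First I would introduce a discrete Lyapunov functional of the form
\[
    \Psi_k = f_k + \tfrac{1}{2\gamma}\|\bv_k\|_{\Ltwo}^2 + \lambda \, \E[\bv_k^\top \bg_k],
\]
with $\lambda$ a small positive constant to be fixed later (analogous to the continuous energy $f(\btheta)+\|\bv\|^2/(2\gamma)$ associated with $p_*$, up to the cross term that provides the $\|\bg_k\|^2$ dissipation). Using $M$-smoothness of $f$, I would expand
\[
    f_{k+1} \leqslant f_k + \bg_k^\top(\bvartheta_{k+1}-\bvartheta_k) + \tfrac{M}{2}\|\bvartheta_{k+1}-\bvartheta_k\|_2^2,
\]
and substitute the pRKLMC update \eqref{eq:thetan+1} for $\bvartheta_{k+1}-\bvartheta_k$. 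Similarly I would expand $\|\bv_{k+1}\|^2$ from \eqref{eq:vn+1}. For the dominant drift terms (those with $\bar\alpha_2 h \bv_n$ and $\sum_i \delta(1-e^{-\gamma h(1-U_{ni})})\bg_n^{(Q-1,i)}$) I would replace the approximate gradients $\bg_n^{(Q-1,i)}$ by $\bg_n$, using \Cref{lem:2} to control the additional error terms.

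In a second step, taking expectations and gathering terms (and using that $\bar\alpha_2$, $\bar\sigma_{\cdot}$, $e^{-\gamma h(1-U)}$ concentrate around explicit values when $\eta\leqslant 0.1$), I expect an inequality of the form
\[
    \Psi_{k+1} \leqslant \Psi_k - c_1 h\|\bg_k\|_{\Ltwo}^2 - c_2 h\|\bv_k\|_{\Ltwo}^2 + c_3 \gamma h p + (\text{higher-order negligible terms in }\bar h),
\]
for some explicit positive constants $c_i$. The Brownian increment contributions feed $\gamma h p$ terms through It\^o-type isometries, and the cross term $\lambda \bv_k^\top \bg_k$ is what produces the $\|\bg_k\|^2$ coefficient: this mirrors the standard Hessian-based Lyapunov construction for underdamped dynamics (see e.g.\ the $\gamma\geqslant 5M$ regime used in prior work). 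Multiplying by $\varrho^{n-k}=e^{-m(n-k)h}$ and summing, then applying \Cref{lem:prev}, converts the above into a bound on $\eta\sum_{k=0}^n \varrho^{n-k}\|\bg_k\|_{\Ltwo}^2$ and $\eta\sum_{k=0}^n \varrho^{n-k}\|\bv_k\|_{\Ltwo}^2$ in terms of $\varrho^n \gamma \E[f_0]$ and $\gamma^2 p/m$ (the bias after absorbing $\eta$ versus $1-\varrho\asymp mh$ gives the $\kappa p$ scaling, and multiplication by $\gamma$ converts this to $\gamma^2 p/m$). Finally, since $\bvartheta_n$ is not assumed to be close to the minimizer, I would replace the $\|\bvartheta_n-\btheta^*\|_{\Ltwo}^2$ terms by $x_n$ using the triangle inequality and the stationarity of $(\bL_{nh},\bV_{nh})$: specifically, $\|\bvartheta_n\|_{\Ltwo}\leqslant \|\bvartheta_n-\bL_{nh}\|_{\Ltwo}+\|\bL_{nh}\|_{\Ltwo}$ and $\|\bL_{nh}\|_{\Ltwo}\leqslant \sqrt{p/m}$ (up to a constant), and likewise $\|\bV_{nh}\|_{\Ltwo}\leqslant \sqrt{\gamma p}$. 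Combining $\bvartheta_n$ and $\bv_n$ via the combination $(\|\bv_n-\bV_{nh}\|^2+\|\bv_n-\bV_{nh}+\gamma(\bvartheta_n-\bL_{nh})\|^2)^{1/2}$ is what produces the $x_n$ appearing in the statement.

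The main obstacle, as in the analogous RLMC step, is the careful bookkeeping of the constants and the discretization error terms coming from \Cref{lem:2}: these involve both $\|\bv_n\|_{\Ltwo}$ and $\|\bg_n\|_{\Ltwo}$ on the right-hand side, so when plugged into the Lyapunov recursion they partially cancel the dissipation $-c_1 h\|\bg_k\|^2 - c_2 h\|\bv_k\|^2$. Closing the argument therefore requires showing that, under $\gamma\geqslant 5M$ and $\kappa(\bar h^6/R^3+\bar h^{4Q-2})\leqslant 10^{-6}$, these error terms are small enough (of order $\bar h^3/R^{3/2}$ and $\bar h^{2Q-1}$ times the dissipative quantities) to be absorbed, leaving net positive coefficients $c_1, c_2$. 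Once this absorption is verified, the telescoping via \Cref{lem:prev} and the comparison to $x_n$ directly yield the two claimed bounds with the explicit constants. Using the Polyak--\L{}ojasiewicz inequality $\|\bg_k\|^2\geqslant 2m f_k$ at the last step gives the relation between the $\|\bg_k\|^2$ bound and $\gamma\E[f_0]$ appearing in both displays.
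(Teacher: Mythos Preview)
Your overall strategy---tracking $f_k$, $\|\bv_k\|^2$, and the cross term $\E[\bv_k^\top\bg_k]$ and telescoping via \Cref{lem:prev}---is the right one and matches the paper's mechanism. However, the paper's execution differs in two ways worth noting.

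First, the paper does \emph{not} form a single Lyapunov $\Psi_k$. Instead it derives three separate one-step inequalities (packaged as \Cref{lem:8}): one for $\|\bv_{n+1}\|_{\Ltwo}^2$, one for $z_{k+1}:=\E[\bv_{k+1}^\top\bg_{k+1}]$, and one for $\gamma\E[f_{k+1}-f_k]$. Each is converted to an inequality on the weighted sums $S_n(v^2),S_n(z),S_n(f),S_n(g^2)$, and the resulting linear system is solved. Keeping the recursions separate lets the paper use only the negative part $S_n(z)_-$ rather than $|S_n(z)|$, which simplifies the constant-chasing. In your single-Lyapunov route the term $(1-\varrho)\lambda S_n(z)$ reappears after telescoping and must be re-absorbed by Cauchy--Schwarz; this can be done under $\gamma\geqslant 5M$, but it is where your argument is less direct than the paper's.

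Second, and more concretely: you invoke \Cref{lem:2} to control the replacement $\bg_n^{(Q-1,i)}\to\bg_n$, but that lemma compares the iterate to the continuous diffusion $(\bL'_h,\bV'_h)$ and is not the right tool here. What is actually needed is the elementary bound on $\|\bvartheta_n^{(Q-1,i)}-\bvartheta_n\|_{\Ltwo}$ (this is \Cref{lem:dev} in the paper), which gives an $O(h)$ estimate independent of $R$ and $Q$. In particular, Proposition~\ref{prop:4} holds under only $\gamma\geqslant 5M$ and $\eta\leqslant 0.1$; the condition $\kappa(\bar h^6/R^3+\bar h^{4Q-2})\leqslant 10^{-6}$ is \emph{not} used, so your worry about absorbing the fine parallelization errors is misplaced for this proposition. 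Finally, the $(x_n+1.5\sqrt{\gamma p})^2$ term does not come from replacing $\|\bvartheta_n-\btheta^*\|$ in a running recursion: it arises as the boundary term $|z_{n+1}|/\eta$ left over after telescoping the cross-term inequality, and only then is $|z_{n+1}|$ bounded via $\|\bg_{n+1}\|_{\Ltwo}\leqslant M\|\bvartheta_{n+1}-\bL_{(n+1)h}\|_{\Ltwo}+\sqrt{Mp}$ and $\|\bv_{n+1}\|_{\Ltwo}\leqslant x_{n+1}+\sqrt{\gamma p}$ (followed by $x_{n+1}\leqslant$ something close to $x_n$, though the paper actually bounds at index $n$).
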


\begin{proof}[Proof of Proposition~\ref{prop:4}]
We use the same shorthand notation as in the previous
proofs. 
Let us define $z_k = \mathbb E[\bv_k^\top \bg_k]$, and
\begin{align}
    S_n(z) &:= \sum_{k=0}^{n}\varrho^{n-k}z_k,
    &S_n(g^2) &:= \sum_{k=0}^n\varrho^{n-k} \norm{\bg_k}^2_{\Ltwo}, \\    
    S_n(f) &:= \sum_{k=0}^{n}\varrho^{n-k}
        \mathbb E[f_{k}],
    &S_n(v^2) &:= \sum_{k=0}^{n}\varrho^{n-k}
        \|\bv_{k}\|_{\Ltwo}^2 \\    
    S_n(z)_{-} &:= \max\big(0,-S_n(z)\big),
    &S_n^{+1}(v^2) &:= \sum_{k=1}^{n+1}\varrho^{n+1-k}
        \|\bv_{k}\|_{\Ltwo}^2\,.
\end{align}
We will need the following auxiliary lemma, the proof of which is postponed.
\begin{lemma}\label{lem:8}
    For any $\gamma>0$ and $h>0$ satisfying $\gamma \geqslant 5M$ and any 
    $\eta \leqslant 1/5$, the iterates of the
    randomized midpoint discretization of the kinetic 
    Langevin diffusion satisfy
    \begin{alignat}{5}%
    \|\bv_{n+1}\|_{\Ltwo}^2 
        & \leqslant & (1 - 1.66\eta ) \| \bv_n \|_{\Ltwo}^2 
        - & &2\bar\alpha_2\eta(1 - 
\eta \bar\alpha_2)  \mathbb E [\bv_n^\top \bg_{n}] +&& 2.02\eta^2 \|\bg_n\|_{\Ltwo}^2 +
        &\,3.02\gamma\eta  p
        \label{eq:help1}\\
    \mathbb E[\bv_{n+1}^\top\bg_{n+1}] 
        &\leqslant &0.91 \eta \|\bv_n\|_{\Ltwo}^2  +&&\, (1- \bar\alpha_2\eta )\mathbb E[\bv_{n}^\top\bg_{n}] -&& 0.938\eta
        \|\bg_n\|_{\Ltwo}^2  +  &\,1.51 \eta^2\gamma p
        \label{eq:help2}\\
    \gamma \mathbb E[f_{n+1} - f_n] 
        &\leqslant &0.33\eta^2 \|\bv_n\|^2_{\Ltwo} +& &\bar\alpha_2\eta \mathbb
        E[\bv_n^\top \bg_n] -&& \,0.47\eta^2\|\bg_n\|^2_{\Ltwo}  +&\, 
        0.324 \eta^3\gamma  p.\label{eq:help3}
    \end{alignat}
\end{lemma}
From the first inequality~\eqref{eq:help1} in 
\Cref{lem:8}, we infer that
\begin{align}
    S_{n}^{+1}(v^2) &\leqslant (1 - 1.66\eta ) S_n(v^2) 
    -2\bar\alpha_2\eta (1-\bar\alpha_2\eta)  S_n(z) + 2.02\eta^2 S_n(g^2) 
    + 3.02\gamma^2p/m.
\end{align}
In view of \Cref{lem:prev} and the fact that 
$\|\bv_0\|_{\Ltwo}^2 = \gamma p$, this implies that
\begin{align}
    1.66\eta S_n(v^2) + 2\bar\alpha_2\eta(1-\bar\alpha_2\eta)  S_n(z) 
     &\leqslant  S_n(v^2) - S_{n}^{+1}(v^2) + 2.02\eta^2 
     S_n(g^2) + 3.02\gamma^2p/m \\
     &\leqslant (1-\varrho) S_n(v^2) + 2.02\eta^2 
     S_n(g^2) + {3.02\gamma^2p}/{m} + \gamma p .
\end{align}
Note that $1-\varrho\leqslant \frac{m\eta}{\gamma} 
\leqslant 0.02\eta$. 
Therefore, we obtain
\begin{align}
    (1.66 - 0.02) S_n(v^2) + 2\bar\alpha_2 (1-\eta\bar\alpha_2)
    S_n(z) \leqslant 2.02\eta S_n(g^2) +  
    \frac{3.02\gamma^2 p}{m\eta},
\end{align}
that is equivalent to
\begin{align}
    \boxed{S_n(v^2)  \leqslant 1.22\bar\alpha_2(1-\bar\alpha_2\eta) S_n(z)_- 
    + 1.24\eta S_n(g^2) +  \frac{1.84\gamma^2 p}{m\eta}.}
    \label{eq:54}
\end{align}
The second step is to use the second inequality~\eqref{eq:help2} of \Cref{lem:8}.
Note that $m\eta/\gamma \leqslant 1/500$ implies $1-\varrho\geqslant0.998 m\eta/\gamma$. 
It then follows that
\begin{align}
    S_{n}^{+1} (z)
        & = (1- \bar\alpha_2\eta )
        S_n(z) - 0.938\eta  S_n(g^2) + 
        0.91 \eta S_n(v^2) + {1.51 \eta\gamma^2 p/m}.
\end{align}
This  inequality, combined with \Cref{lem:prev},  yields
\begin{align}
0.938 \eta S_n(g^2)
    &\leqslant -(\bar\alpha_2\eta +\varrho -1)S_n(z)  + 0.91\eta S_n(v^2)  + |z_{n+1}|
    + 1.51\eta\gamma^2 p/m\\
    &\leqslant \bar\alpha_2\eta S_n(z)_-  + 0.91\eta S_n(v^2)  + |z_{n+1}| +
    1.51\eta\gamma^2 p/m.
\end{align}
This can be rewritten as 
\begin{align}
    \boxed{S_n(g^2)
    \leqslant  1.07\bar\alpha_2 S_n(z)_- + 
    0.98 S_n(v^2) + \frac{1.07|z_{n+1}|}\eta
    + \frac{1.61 \gamma^2 p}{m}.}
    \label{eq:xx}
\end{align}
Let us now proceed with a similar treatment for the
last inequality of \Cref{lem:8}. Applying \Cref{lem:prev}, we get $S_{n}^{+1}(f)\geqslant 
\varrho S_n(f) - \varrho^{n+1} \mathbb E[f_0]\geqslant 
(1 - m \eta/\gamma ) S_n(f) - \varrho^{n+1} 
\mathbb E[f_0]$, which leads to
\begin{align}
    -m\eta S_n(f)
        &\leqslant\varrho^{n+1} \gamma\mathbb E[f_0] 
        + 0.33 \eta^2 S_n (v^2) + \bar\alpha_2 \eta  
        S_n(z) - 0.474\eta^2 S_n(g^2) + 0.324
        \frac{\eta^2\gamma^2 p}{m}.
\end{align}
From this inequality, and the Polyak-Lojasievicz 
condition, one can infer that
\begin{align}\label{eq:Snz2}
    \boxed{
    \bar\alpha_2 S_n(z)_-
        \leqslant \varrho^{n+1} \gamma \mathbb E
        [f_0]/\eta + 0.33 \eta S_n(v^2) + (0.5 - 
        0.474\eta) S_n(g^2) + 0.324 \frac{\eta\gamma^2 
        p}{m}.}
\end{align}
Combining \eqref{eq:Snz2} with \eqref{eq:54},
we get
\begin{align}
    S_n(v^2) \leqslant 1.22 \Big(\varrho^{n+1} 
    \gamma\mathbb E[f_0]/\eta &+ 0.33 \eta S_n(v^2) + 
    (0.5 - 0.474\eta) S_n(g^2) + 0.324 \frac{\eta\gamma^2 p}{m}\Big)\\
    & + 1.24\eta S_n(g^2)  +\frac{1.84\gamma^2 p
    }{m\eta}.
\end{align}
Since $\eta\leqslant 0.1$, it follows then
\begin{align}\label{eq:h1}   
     \boxed{S_n(v^2) \leqslant 0.71\Big( S_n(g^2) + \frac{1.8 \varrho^{n}\gamma}{
     \eta} \,\mathbb E[f_0] +  
     \frac{2.71\gamma^2p}{m\eta}\Big)\,.}
\end{align}
Similarly, combining \eqref{eq:Snz2} and
\eqref{eq:xx}, we get
\begin{align}
    S_n(g^2) \leqslant 1.07 \Big(\varrho^{n}\gamma 
    \mathbb E[f_0]/\eta &+ 0.33 \eta S_n(v^2) + 
    (0.5 - 0.474\eta) S_n(g^2) + 0.324 \frac{\eta\gamma^2 p}{m}\Big)\\
    & + 0.98 S_n(v^2) + \frac{1.07|z_{n+1}|}{\eta} + \frac{1.61\gamma^2 p
    }{m}.
\end{align}
Since $\eta\leqslant 0.1$, it follows then
\begin{align}\label{eq:h2}
    \boxed{
    S_n(g^2) \leqslant 0.8  S_n(v^2) + \frac{ 2.302\varrho^{n}\gamma}{\eta}\, 
    \mathbb E[f_0] +  
     \frac{2.302|z_{n+1}|}{\eta} + \frac{3.537\gamma^2 p}{m}.
    }
\end{align}
Equations \eqref{eq:h1} and \eqref{eq:h2} 
together yield 
\begin{align}
    S_n(g^2) &\leqslant 0.568 \Big( 
    S_n(g^2) + \frac{1.8\varrho^{n}\gamma}{
     \eta} \,\mathbb E[f_0] +  
     \frac{2.71\gamma^2p}{m\eta}\Big) 
     + \frac{ 2.302\varrho^{n}\gamma}{\eta}\, 
    \mathbb E[f_0] +  
     \frac{2.302|z_{n+1}|}{\eta} + \frac{3.537\gamma^2 p}{m}\\
     &\leqslant 0.568
    S_n(g^2) 
     + \frac{ 3.33\varrho^{n}\gamma}{\eta}\, 
    \mathbb E[f_0] +  
     \frac{2.302|z_{n+1}|}{\eta} + \frac{1.893\gamma^2 p}{m\eta}.
\end{align}
Hence, we get
\begin{align}
    \boxed{
    S_n(g^2) \leqslant \frac{ 7.71\varrho^{n}\gamma}{\eta}\, \mathbb E[f_0] +  
     \frac{5.33|z_{n+1}|}{\eta} + \frac{4.39\gamma^2 p}{m\eta}.}
\end{align}
Using once again equation \eqref{eq:h1}, 
we arrive at 
\begin{align}
    S_n(v^2) &\leqslant 0.71\Big( \frac{ 7.71\varrho^{n}\gamma}{\eta}\, \mathbb E[f_0] +  
     \frac{5.33|z_{n+1}|}{\eta} + \frac{4.39\gamma^2 p}{m\eta} + \frac{1.8 \varrho^{n+1}}{
     \eta} \,\mathbb E[f_0] +  
     \frac{2.71\gamma^2p}{m\eta}\Big),
\end{align}
which is equivalent to
\begin{align}
    \boxed{
    S_n(v^2) \leqslant  \frac{ 6.76\varrho^{n}\gamma}{\eta}\, \mathbb E[f_0] +  
     \frac{3.79|z_{n+1}|}{\eta} + \frac{5.05\gamma^2 p}{m\eta}.}
\end{align}
To complete the proof of the proposition, 
it remains to establish a suitable upper
bound on $|z_{n+1}|$. To this end, we note
that 
\begin{align}
\norm{\bg_n}_{\Ltwo} &\leqslant 
    M \norm{\bvartheta_n-\bL_{nh}}_{\Ltwo}+\sqrt{M p}\\
    &\leqslant 0.2\norm{\gamma(\bvartheta_n-\bL_{nh})}_{\Ltwo} + \sqrt{0.2\gamma p}\\
    &\leqslant 0.3 (x_n + 1.5\sqrt{\gamma p})\\
\norm{\bv_n}_{\Ltwo}
    &\leqslant \norm{\bv_n-\bV_{nh}}_{\Ltwo}+\sqrt{\gamma p}\\
    &\leqslant \norm{\bv_n-\bV_{nh}}_{\Ltwo}+\sqrt{\gamma p}\\
    &\leqslant x_n +\sqrt{\gamma p}.
\end{align}
Then,  following the same steps as those
used in the proof of the second inequality of \Cref{lem:8}, one can infer that 
\begin{align}
    |z_{n+1}| 
    &\leqslant  
        |z_n| + 0.97\eta\|
        \bg_n\|_{\Ltwo}^2 +  0.51 \eta\|\bv_n\|_{
        \Ltwo}^2 +  0.09  \eta^2\gamma p\\
    &\leqslant  \|
        \bg_n\|_{\Ltwo}\|
        \bv_n\|_{\Ltwo}
         + 0.1\|\bg_n\|_{\Ltwo}^2 +  0.051 \|\bv_n\|_{
        \Ltwo}^2 +  0.001 \gamma p\\
    &\leqslant 1.1\|\bg_n\|^2_{\Ltwo} + 
    0.301\|\bv_n\|^2_{\Ltwo} + 0.001\gamma p\\
    &\leqslant 0.099(x_n + 1.5\sqrt{\gamma p})^2 
        + 0.301(x_n + 1.1\sqrt{p})^2\\
    &\leqslant 0.4\big(x_n +1.5 \sqrt{p}\big)^2.
\label{eq:help4}
\end{align}
This completes the proof of the proposition. 
\end{proof}

\subsection{Proof of Theorem~\ref{thm:rklmc}}

Let $\bvartheta_{n+U},\bvartheta_{n+1},\bv_{n+1}$ 
be the iterates of Algorithm.
Let $(\bL_t,\bV_t)$ be the kinetic Langevin
diffusion, coupled with $(\bvartheta_n,\bv_n)$ through the same Brownian motion and starting from a random point $(\bL_0,\bV_0)\propto \exp(-f (\btheta) - \frac{1}{2}\|\bv\|^2)$ such that $\bV_0 = \bv_0$. Let $(\bL'_t,\bV'_t)$ be the kinetic 
Langevin diffusion defined on $[0,h]$ using the same Brownian motion and starting from $(\bvartheta_n, 
\bv_n)$. 

Our goal will be to bound the term $x_n$ defined by
\begin{align}
    x_n = \bigg\| \bfC
    \begin{bmatrix}
        \bv_n- \bV_{nh}\\
        \bvartheta_n-\bL_{nh}
    \end{bmatrix}
    \bigg\|_{\Ltwo}
    \quad \text{with}\quad 
    \bfC = \begin{bmatrix}
    \mathbf I_p & \mathbf 0_{p}\\
    \mathbf I_p & \gamma\mathbf I_p
    \end{bmatrix}.
\end{align}
To this end, define
\begin{align}
    \bar\bv_{n+1} = \mathbb E_{U}[\bv_{n+1}],\qquad
    \bar\bvartheta_{n+1} = \mathbb E_{U}[\bvartheta_{n+1}]. 
\end{align}
Since $(\bV_{(n+1)h},\bL_{(n+1)h})$ are independent
of $U$, we have 
\begin{align}
    x_{n+1}^2 & = \bigg\| \bfC
    \begin{bmatrix}
        \bv_{n+1}- \bar\bv_{n+1}\\
        \bvartheta_{n+1}-\bar\bvartheta_{n+1}
    \end{bmatrix}
    \bigg\|_{\Ltwo}^2 + 
    \bigg\| \bfC
    \begin{bmatrix}
        \bar\bv_{n+1}- \bV_{(n+1)h}\\
        \bar\bvartheta_{n+1}-\bL_{(n+1)h}
    \end{bmatrix}
    \bigg\|_{\Ltwo}^2.
\end{align}
Using the triangle inequality and Proposition 1 in~\cite{yu2023langevin} (See also Proposition 1 from~\citep{dalalyan_riou_2018}), we get
\begin{align}
    \bigg\| \bfC
    \begin{bmatrix}
        \bar\bv_{n+1}- \bV_{(n+1)h}\\
        \bar\bvartheta_{n+1}-\bL_{(n+1)h}
    \end{bmatrix}
    \bigg\|_{\Ltwo}&\leqslant 
    \bigg\| \bfC
    \begin{bmatrix}
        \bar\bv_{n+1}- \bV'_{h}\\
        \bar\bvartheta_{n+1}-\bL'_{h}
    \end{bmatrix}
    \bigg\|_{\Ltwo}   
    +
    \bigg\| \bfC
    \begin{bmatrix}
        \bar\bV'_{h}- \bV_{(n+1)h}\\
        \bar\bL'_{h}-\bL_{(n+1)h}
    \end{bmatrix}
    \bigg\|_{\Ltwo}\\
    &\leqslant \bigg\| \bfC
    \begin{bmatrix}
        \bar\bv_{n+1}- \bV'_{h}\\
        \bar\bvartheta_{n+1}-\bL'_{h}
    \end{bmatrix}
    \bigg\|_{\Ltwo}   
    + \varrho x_{n}
\end{align}
where $\varrho = e^{-mh}$. Combining these
inequalities, we get
\begin{align}
    x_{n+1}^2 &\leqslant \big(\varrho x_{n} + y_{n+1}\big)^2 + z_{n+1}^2 
\end{align}
where
\begin{align}
    y_{n+1} = \bigg\| \bfC
    \begin{bmatrix}
        \bar\bv_{n+1} - \bV'_{h}\\
        \bar\bvartheta_{n+1} - \bL'_{h}
    \end{bmatrix}
    \bigg\|_{\Ltwo},\qquad 
    z_{n+1} = \bigg\| \bfC
    \begin{bmatrix}
        \bv_{n+1}- \bar\bv_{n+1}\\
        \bvartheta_{n+1}-\bar\bvartheta_{n+1}
    \end{bmatrix}
    \bigg\|_{\Ltwo}.
\end{align}
This yields\footnote{One can check by induction, 
that if for some sequences $x_n,y_n,z_n$ and 
some $\varrho \in(0,1)$ it holds that $x_{n+1}^2
\leqslant  (\varrho x_{n} + y_{n+1})^2 + z_{n+1
}^2$, then necessarily $x_n\leqslant \varrho^n 
x_0 + \sum_{k=1}^n \varrho^{n-k}y_k + (\sum_{
k=1}^n \varrho^{2(n-k)}z_k^2)^{1/2}$ for every 
$n\in\mathbb N$.}
\begin{align}
    x_n &\leqslant \varrho^n x_0 + \sum_{k=1}^n 
    \varrho^{n-k} y_k + \bigg(\sum_{k=1}^n
    \varrho^{2(n-k)} z_k^2\bigg)^{1/2}\,.
    \label{eq:xnC}
\end{align}
Using the fact that $\|\bfC[a, b]^\top\|^2 = 
\|a\|^2 + \|a + \gamma b\|^2 \leqslant 3\|a\|^2 
+ 2\gamma^2\|b\|^2$, we arrive at
\begin{align}
    y_{n+1}^2&\leqslant 3\|\bar\bv_{n+1} - \bV'_h 
    \|_{\Ltwo}^2 + 2\gamma^2 \|\bar\bvartheta_{
    n+1} - \bL'_h\|_{\Ltwo}^2, \label{eq:yn}\\
    z_{n+1}^2&\leqslant 3\|\bv_{n+1} - \bar\bv_{
    n+1} \|_{\Ltwo}^2 + 2\gamma^2 \|\bvartheta_{
    n+1} - \bar\bvartheta_{n+1}\|_{\Ltwo}^2.
    \label{eq:zn}
\end{align}
Note that 
\begin{align}
    \bigg\{\sum_{k=1}^n \varrho^{n-k} y_k\bigg\}^2     
   &\leqslant \sum_{k=1}^n \frac{\varrho^{n-k}y_k^2}{{1-\varrho}}   
   \leqslant \frac{1.001\gamma}{m\eta}\sum_{k=1}^n 
   \varrho^{n-k} (3\|\bar\bv_{k} - \bV'_h 
    \|_{\Ltwo}^2 + 2\gamma^2 \|\bar\bvartheta_{k} 
    - \bL'_h\|_{\Ltwo}^2) .
    \label{eq:xn_rkl}
\end{align}
We then have
\begin{align}
 x_n &\leqslant \varrho^n x_0 + \bigg(\frac{1.001\gamma}{m\eta}\sum_{k=1}^n 
   \varrho^{n-k} (3\|\bar\bv_{k} - \bV'_h 
    \|_{\Ltwo}^2 + 2\gamma^2 \|\bar\bvartheta_{k} 
    - \bL'_h\|_{\Ltwo}^2) \bigg)^{1/2} \\
   &\qquad  + \bigg(\sum_{k=1}^n
    \varrho^{2(n-k)} 3\|\bv_{k} - \bar\bv_{
    k} \|_{\Ltwo}^2 + 2\gamma^2 \|\bvartheta_{
    k} - \bar\bvartheta_{k}\|_{\Ltwo}^2 \bigg)^{1/2}\,.
\end{align}
By \Cref{lem:2}, we find
\begin{align}
\gamma^2\|\bar\bvartheta_{n+1}-\bL'_h\|_{\Ltwo}^2 
&\leqslant  
0.002\Bigg[{\frac{M\gamma^7h^6}{R}} (\delta^2 \|\bv_n\|^2_{\Ltwo}+{\gamma^2\delta^2h^2}\|\bg_n\|_{\Ltwo}^2+\gamma^2\delta^3p)\\
&\qquad \qquad + M\gamma^3h^2(\gamma h)^{4Q-4}(h^2\|\bv_n\|^2_{\Ltwo}+\gamma^2h^4\|\bg_n\|_{\Ltwo}^2+\gamma^2h^3p)\Bigg]
\\
 \|\bar \bv_{n+1} -\bV'_h\|_{\Ltwo}^2
    & \leqslant 
0.13\Bigg[{\frac{M\gamma^7h^6}{R}} (\delta^2 \|\bv_n\|^2_{\Ltwo}+{\gamma^2\delta^2h^2}\|\bg_n\|_{\Ltwo}^2+\gamma^2\delta^3p)\\
&\qquad \qquad + M\gamma^3h^2(\gamma h)^{4Q-4}(h^2\|\bv_n\|^2_{\Ltwo}+\gamma^2h^4\|\bg_n\|_{\Ltwo}^2+\gamma^2h^3p)\Bigg]
    \\
\gamma^2 \|\bvartheta_{n+1}-\bar\bvartheta_{n+1}\|_{\Ltwo}^2 
&\leqslant  
3.122\Big((\gamma h)^{2Q}+\frac{\gamma^4h^4}{R^2} \Big)  (\|\bv_n\|_{\Ltwo}^2+ \|\bg_n\|_{\Ltwo}^2
   + \gamma^2 ph)\\
\| \bv_{n+1}-\bar \bv_{n+1} \|_{\Ltwo}^2
& \leqslant 3.122\Big((\gamma h)^{2Q}+\frac{\gamma^4h^4}{R^2} \Big)  (\|\bv_n\|_{\Ltwo}^2+ \|\bg_n\|_{\Ltwo}^2
   + \gamma^2 ph)
    \,. 
\end{align}
Therefore, we infer from~\eqref{eq:xn_rkl} that
\begin{align}
x_n  &\leqslant    \varrho^n x_0  +0.63\Big(\frac{\gamma}{m\eta}\sum_{k=1}^n \varrho^{n-k}\bigg[{\frac{M\gamma^7h^6\delta^2}{R}}(\|\bv_n\|_{\Ltwo}^2+{\gamma^2h^2}\|\bg_n\|_{\Ltwo}^2+\gamma^2\delta p)\\
&\qquad\qquad \qquad \qquad\qquad +M\gamma^3h^4(\gamma h)^{4Q-4}(\|\bv_n\|_{\Ltwo}^2+\gamma^2h^2\|\bg_n\|_{\Ltwo}^2+\gamma^2ph)\Big]\bigg)^{1/2}\\
&\qquad \qquad + 3.96\bigg(\sum_{k=1}^n\varrho^{2(n-k)}\Big((\gamma h)^{2Q}+\frac{\gamma^4h^4}{R^2}\Big)(\|\bv_n\|_{\Ltwo}^2+\|\bg_n\|_{\Ltwo}^2+\gamma^2 hp) \bigg)^{1/2}\\
&\leqslant
 \varrho^n x_0  
 +\Big[0.63\big({\kappa\gamma^6h^4\delta^2/R}+\kappa(\gamma h)^{4Q-2}\big)^{1/2}+ 3.96\big(\gamma^3h\delta^2+(\gamma h)^{2Q-1}\big)^{1/2}\Big]\Big(\gamma h\sum_{k=1}^n \varrho^{n-k}\|\bv_n\|_{\Ltwo}^2\Big)^{1/2}\\
 &\quad + \Big[0.007\big({\kappa\gamma^6h^4\delta^2/R}+\kappa(\gamma h)^{4Q-2}\big)^{1/2}+3.96\big(\gamma^3 h\delta^2+(\gamma h)^{2Q-1}\big)^{1/2}\Big] \Big(\gamma h\sum_{k=1}^n \varrho^{n-k}\|\bg_n\|_{\Ltwo}^2\Big)^{1/2}\\
&\quad +
\Big[0.07\big({\kappa\gamma^6h^4\delta^2/R}+\kappa(\gamma h)^{4Q-2}\big)^{1/2}+0.4\big(\gamma^3 h\delta^2+(\gamma h)^{2Q-1}\big)^{1/2}\Big]\gamma\sqrt{p/m}\,.
\end{align}
By Proposition~\ref{prop:4}, we then obtain
\begin{align}
x_n
&\leqslant  \varrho^n x_0 
+\Big[0.793\big({\kappa\gamma^6h^4\delta^2/R}+\kappa(\gamma h)^{4Q-2}\big)^{1/2}
+14.03\big(\gamma^3h\delta^2+(\gamma h)^{2Q-1}\big)^{1/2}\Big]x_n\\
&\qquad \quad+ \Big[1.66\big({\kappa\gamma^6h^4\delta^2/R}+\kappa(\gamma h)^{4Q-2}\big)^{1/2}
+21.3\big(\gamma^3h\delta^2+(\gamma h)^{2Q-1}\big)^{1/2}\Big]\sqrt{\gamma\varrho^n\E[f_0]}\\
&\qquad \quad+ 
\Big[1.19\big({\kappa\gamma^6h^4\delta^2/R}+\kappa(\gamma h)^{4Q-2}\big)^{1/2}
+21.04\big(\gamma^3h\delta^2+(\gamma h)^{2Q-1}\big)^{1/2}\Big]\sqrt{\gamma p}\\
&\qquad \quad+ 
\Big[1.44\big({\kappa\gamma^6h^4\delta^2/R}+\kappa(\gamma h)^{4Q-2}\big)^{1/2}
+17.2\big(\gamma^3h\delta^2+(\gamma h)^{2Q-1}\big)^{1/2}\Big]\gamma\sqrt{p/m}\,.
\end{align}
Note that $\kappa =\frac{M}{m}\leqslant \frac{\gamma}{5m}, Q\geqslant 2,$ the third term on the right-hand side can be bounded as follows.
\begin{align}
\Big[1.19&\big({\kappa\gamma^6h^4\delta^2/R}+\kappa(\gamma h)^{4Q-2}\big)^{1/2}
+21.04\big(\gamma^3h\delta^2+(\gamma h)^{2Q-1}\big)^{1/2}\Big]\sqrt{\gamma p}\\
&\leqslant
1.19\big(\gamma^6h^4\delta^2+(\gamma h)^{4Q-2}\big)^{1/2}\gamma \sqrt{p/5m}
+ 21.04 \big(\gamma^3h\delta^2+(\gamma h)^{2Q-1}\big)^{1/2} \gamma \sqrt{p/5m}\\
&\leqslant 1.19 (0.1)^{3/2} \big(\gamma^3 h\delta^2+ (\gamma h)^{2Q-1}\big)^{1/2}\gamma \sqrt{p/5m}
+ 21.04\big(\gamma^3h\delta^2+(\gamma h)^{2Q-1}\big)^{1/2} \gamma \sqrt{p/5m}\\
&\leqslant 9.427 \big(\gamma^3 h\delta^2+ (\gamma h)^{2Q-1}\big)^{1/2}\gamma \sqrt{p/m}
\,.
\end{align}
Combining this with the previous display and simplifying the expression then gives
\begin{align}
x_n
&\leqslant  \varrho^n x_0 
+\Big[0.793\big({\kappa\gamma^6h^4\delta^2/R}+\kappa(\gamma h)^{4Q-2}\big)^{1/2}
+14.03\big(\gamma^3h\delta^2+(\gamma h)^{2Q-1}\big)^{1/2}\Big]x_n\\
&\qquad \quad+ \Big[1.66{\kappa\gamma^6h^4\delta^2/R}+\kappa(\gamma h)^{4Q-2}\big)^{1/2}
+21.3\big(\gamma^3h\delta^2+(\gamma h)^{2Q-1}\big)^{1/2}\Big]\sqrt{\gamma\varrho^n\E[f_0]}\\
&\qquad \quad+ 
\Big[1.44{\kappa\gamma^6h^4\delta^2/R}+\kappa(\gamma h)^{4Q-2}\big)^{1/2}
+26.63\big(\gamma^3h\delta^2+(\gamma h)^{2Q-1}\big)^{1/2}\Big]\gamma\sqrt{p/m}\,.
\end{align}
When ${\kappa\gamma^6h^4\delta^2/R}+\kappa(\gamma h)^{4Q-2}\leqslant 0.1^6,$ note that $\delta = h/R$ and $R\geqslant 1,$ we have
\begin{align}
\big(\gamma^3h\delta^2+(\gamma h)^{2Q-1}\big)^2\leqslant 2\gamma^6 h^2\delta^4+2(\gamma h)^{4Q-2}\leqslant {2\gamma^6 h^4\delta^2/R}+2(\gamma h)^{4Q-2}\leqslant 2\times 0.1^6\,.
\end{align}
This implies
\begin{align}
x_n
&\leqslant  \varrho^n x_0 
+0.529 x_n
+ 0.803 \sqrt{\gamma\varrho^n\E[f_0]}\\
&\quad +\Big[1.44\big({\kappa\gamma^6h^4\delta^2/R}+\kappa(\gamma h)^{4Q-2}\big)^{1/2}
+26.63\big(\gamma^3h\delta^2+(\gamma h)^{2Q-1}\big)^{1/2}\Big]\gamma\sqrt{p/m}\,.
\end{align}
Using the fact that $x_0 = \gamma 
\wass_2(\nu_0,\pi)$ and $x_n\geqslant \gamma 
\wass_2(\nu_n,\pi)/\sqrt{2}$, we get
\begin{align}
\wass_2(\nu_n,\pi)
&\leqslant 3.04\varrho^n\wass_2(\nu_0,\pi)
+2.42\sqrt{\frac{\varrho^n}{\gamma}\E[f_0] }
+ 4.33 \big({\gamma^6h^4\delta^2/R}+(\gamma h)^{4Q-2}\big)^{1/2}\sqrt{\kappa p/m}\\
&\qquad + 80.11 \big(\gamma^3h\delta^2+(\gamma h)^{2Q-1}\big)^{1/2}\sqrt{p/m} \\
&\leqslant 3.04\varrho^n\wass_2(\nu_0,\pi)
+1.09\sqrt{\frac{\varrho^n}{m}\E[f_0] }
+ 4.33 \big({\gamma^6h^4\delta^2/R}+(\gamma h)^{4Q-2}\big)^{1/2}\sqrt{\kappa  p/m} \\
&\qquad + 80.11 \big(\gamma^3h\delta^2+(\gamma h)^{2Q-1}\big)^{1/2}\sqrt{p/m}
\end{align}
as desired.
The last step follows from the fact that $\gamma \geqslant 5M \geqslant 5m$.

\subsection{Proofs of the technical lemmas}
We now provide proof of the technical lemmas that we used in this section.
To this end, we first give three auxiliary Lemmas~\ref{lem:9}-\ref{lem:dev1}, which quantify the precision of the midpoint $\bvartheta_n^{(Q-1,i)}.$

\begin{lemma}%
\label{lem:9}
For every  $h>0$ and $Q\geqslant 2$, it holds that

{
\begin{align}
\frac{1}{R}\sum_{i=1}^R  \|\bvartheta_n^{(Q-1,i)} -  \bL'_{U_i h}\|_{\Ltwo}
    &\leqslant 
 \frac{\bar h^2}{\sqrt{R}}   \Big(\sum_{i=0}^{Q-2} \bar h^{2i}\Big)\big(1.28\delta \|\bv_n\|_{\Ltwo}
+1.28\gamma h\delta \|\bg_n\|_{\Ltwo}
+1.35\gamma\delta\sqrt{p\delta}\big)
   \\ 
    &\qquad + {\bar h}^{2(Q-2)}e^{0.5{\bar h^2}} 
    \Big(h\|\bv_n\|_{\Ltwo} + 0.5\gamma h^2\|\bg_n\|_{\Ltwo}
   +\sqrt{(2/3)ph}\gamma h\Big)
\end{align}
}

\end{lemma}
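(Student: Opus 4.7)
\textbf{Proof proposal for \Cref{lem:9}.} The plan is to mirror the strategy of \Cref{lem:A0} from the pRLMC analysis, establishing a one-step recursion of the form
\begin{align}
\frac{1}{R}\sum_{i=1}^R\|\bvartheta_n^{(q,i)}-\bL'_{U_i h}\|_{\Ltwo}\leqslant \bar h^2\cdot\frac{1}{R}\sum_{j=1}^R\|\bvartheta_n^{(q-1,j)}-\bL'_{U_j h}\|_{\Ltwo}+\mathrm{(local\ error)}_q,
\end{align}
and then unrolling it $Q-2$ times down to the base case $\bvartheta_n^{(0,r)}=\bvartheta_n$. The preliminary step is to integrate the velocity formula \eqref{eq:exactsol} in time to obtain the integral representation
\begin{align}
\bL'_t=\bvartheta_n+t\psi(\gamma t)\bv_n-\int_0^t\bigl(1-e^{-\gamma(t-s)}\bigr)\nabla f(\bL'_s)\,\rmd s+\mathrm{(Gaussian\ noise)},
\end{align}
and to verify that, by construction of $\bxi_{ni}$ in step~5 of \Cref{RKLMCp}, the Gaussian noise at $t=U_i h$ equals $U_i h\sqrt{2U_i\gamma\eta}\,\bar\sigma_{1i}\bar\bxi_{n,i}$ used in \eqref{eq:thetaU}. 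Since $\psi(\gamma U_i h)=\bar\alpha_{1i}$, subtracting \eqref{eq:thetaU} from this exact expression cancels both the $\bv_n$ drift and the stochastic term, leaving the pure discretization residual
\begin{align}
\bvartheta_n^{(q,i)}-\bL'_{U_i h}=-\sum_{j=1}^i\int_{(j-1)\delta}^{j\delta\wedge U_i h}\!\bigl(1-e^{-\gamma(U_i h-s)}\bigr)\bigl[\nabla f(\bvartheta_n^{(q-1,j)})-\nabla f(\bL'_s)\bigr]\rmd s.
\end{align}

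Next, I would use $M$-Lipschitzness of $\nabla f$, the bound $1-e^{-x}\leqslant x$, and $\gamma h\leqslant 0.1$ to control the kernel by $\gamma h$, so each $j$-summand is at most $M\gamma h\delta\,\|\bvartheta_n^{(q-1,j)}-\bL'_s\|$ pointwise. Splitting by the triangle inequality $\|\bvartheta_n^{(q-1,j)}-\bL'_s\|\leqslant\|\bvartheta_n^{(q-1,j)}-\bL'_{U_j h}\|+\|\bL'_{U_j h}-\bL'_s\|$ produces two contributions: the first is exactly what drives the recursion with coefficient $M\gamma h^2=\bar h^2$ (after summing in $j$ and averaging in $i$); the second is a ``local'' fluctuation on the window $[(j-1)\delta,j\delta]$ that one bounds using the integrated form of $\bL'$ and the velocity equation to obtain terms of order $\delta\|\bv_n\|_{\Ltwo}$, $\gamma h\delta\|\bg_n\|_{\Ltwo}$, and $\gamma\sqrt{\delta p}\cdot\delta$, as in \eqref{eq:prev11}--\eqref{eq:prev12} but with the underdamped scaling.

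The key delicate point, and the likely main obstacle, is obtaining the $1/\sqrt{R}$ factor on the local error contribution. The idea is to exploit that, conditionally on $(\bvartheta_n,\bv_n,\bW)$, the centered zero-mean random vectors
\begin{align}
Z_j^{(q,i)}=\int_{(j-1)\delta}^{j\delta\wedge U_i h}\!\bigl(1-e^{-\gamma(U_i h-s)}\bigr)\bigl[\nabla f(\bL'_{U_j h})-\nabla f(\bL'_s)\bigr]\rmd s
\end{align}
become approximately independent across $j$ (since the $U_j$'s are independent and concentrated in disjoint intervals), so Cauchy--Schwarz in $\Ltwo$ applied to $\|\sum_j Z_j^{(q,i)}\|$ yields an $\sqrt{R}\cdot\max_j\|Z_j^{(q,i)}\|_{\Ltwo}$ bound instead of $R\cdot\max_j\|\cdots\|$. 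After averaging in $i$ and further applying Cauchy--Schwarz, this produces the $\bar h^2/\sqrt{R}$ prefactor on the $\delta$-scale terms. This argument parallels the treatment of the $Z_j$'s in the proof of \Cref{lem:A0}, with the additional complication that the kernel $(1-e^{-\gamma(U_i h-s)})$ makes the weights $i$-dependent.

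Finally, I would iterate the recursion $Q-2$ times: the geometric factor $\sum_{i=0}^{Q-2}\bar h^{2i}$ accumulates the per-step local errors, while the base case contributes $\bar h^{2(Q-2)}\cdot\tfrac1R\sum_j\|\bvartheta_n-\bL'_{U_j h}\|_{\Ltwo}$, which I would bound using the integral representation of $\bL'_{U_j h}-\bvartheta_n$ to produce the $h\|\bv_n\|_{\Ltwo}+\tfrac12\gamma h^2\|\bg_n\|_{\Ltwo}+\gamma h\sqrt{(2/3)ph}$ term in the statement. The factor $e^{0.5\bar h^2}$ arises from estimating residual Gr\"onwall-type exponentials of the form $\prod(1+\bar h^2)\leqslant e^{\bar h^2}$ along the iteration. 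Tracking constants carefully — in particular the constant $1.28$ and the way $\delta$, $\sqrt{\delta}$, and $\sqrt{p}$ combine — will be the tedious but straightforward concluding computation.
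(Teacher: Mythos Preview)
Your proposal is correct and follows essentially the same route as the paper: cancel the drift and noise to isolate the pure discretization residual, split each summand via the triangle inequality into a recursion term and a local fluctuation, exploit the conditional zero-mean and mutual orthogonality of the $Z_j$'s (across $j$, given $U_i$ and the Brownian path) to gain the $\sqrt{R}$, unroll the recursion, and close with the continuous-time increment bound \eqref{eq:prev2}. One small correction: the factor $e^{0.5\bar h^2}$ is not a Gr\"onwall product accumulated along your iteration---it is already present in the base-case estimate $\|\bL'_{U_j h}-\bvartheta_n\|_{\Ltwo}\leqslant e^{0.5\bar h^2}\bigl(U_j h\|\bv_n\|_{\Ltwo}+\tfrac12\gamma(U_j h)^2\|\bg_n\|_{\Ltwo}+\gamma U_j h\sqrt{(2/3)pU_j h}\bigr)$ from \eqref{eq:prev2} and simply gets multiplied by the power of $\bar h^2$ coming from the unrolled recursion.
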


\begin{proof}[Proof of \Cref{lem:9}]

{
For any $s,u\in[0,h]$ such that $s<u$, it holds that
\begin{align}
\|\bL'_s-\bL'_u\|_{\Ltwo}
&\leqslant \Big|\frac{e^{-\gamma s}-e^{-\gamma u}}{\gamma}\Big|\|\bv_n\|_{\Ltwo}
+ \Big| \frac{e^{-\gamma u}+\gamma u-e^{-\gamma s}-\gamma s}{\gamma}\Big|\|\bg_n\|_{\Ltwo}
+ \gamma \sqrt{\frac{2p(u-s)^3}{3}} \\
&\qquad +\gamma\bigg\| \int_0^s \frac{1-e^{-\gamma(s-t)}}{\gamma} (\grad f(\bL'_t)-\bg_n)dt-\int_0^u  \frac{1-e^{-\gamma(u-t)}}{\gamma} (\grad f(\bL'_t)-\bg_n)dt\bigg\|_{\Ltwo}\,.
\end{align}
Note that $e^{-\gamma s}-e^{\gamma u}\leqslant (u-s)$ and by Mean Value Theorem, it holds for some $\zeta\in[s,u]$ that
\begin{align}
\Big| \frac{e^{-\gamma u}+\gamma u-e^{-\gamma s}-\gamma s}{\gamma}\Big|
= |(1-e^{-\gamma \zeta})(u-s)|
 \leqslant \gamma h(u-s)\,.
\end{align}
Moreover, it holds that
\begin{align}
&\bigg\| \int_0^s \frac{1-e^{-\gamma(s-t)}}{\gamma} (\grad f(\bL'_t)-\bg_n)dt-\int_0^u  \frac{1-e^{-\gamma(u-t)}}{\gamma} (\grad f(\bL'_t)-\bg_n)dt\bigg\|_{\Ltwo}\\
&=\bigg\| \int_0^s \frac{e^{-\gamma(u-t)}-e^{-\gamma(s-t)}}{\gamma} (\grad f(\bL'_t)-\bg_n)dt-\int_s^u  \frac{1-e^{-\gamma(u-t)}}{\gamma} (\grad f(\bL'_t)-\bg_n)dt\bigg\|_{\Ltwo} \\
&\leqslant (u-s)\int_0^u \|\grad f(\bL'_t)-\bg_n\|_{\Ltwo} dt\\
&\leqslant (u-s)M\int_0^u \| \bL'_t-\bL'_0 \|_{\Ltwo}dt\,.
\end{align}
The inequality (42) in~\cite{yu2023langevin}  
states that 
\begin{equation}
\label{eq:prev2}
\|\bL'_s-\bL'_0\|_{\Ltwo}
    \leqslant e^{0.5\bar h^2}\Big(s\|\bv_n\|_{\Ltwo}
    + 0.5\gamma s^2\|\bg_n\|_{\Ltwo}
    + \sqrt{(2/3)ps}\,\gamma s \Big)\,,\qquad s\in[0,h]\,.
\end{equation}
Plugging this into the previous display yields
\begin{align}
&\bigg\| \int_0^s \frac{1-e^{-\gamma(s-t)}}{\gamma} (\grad f(\bL'_t)-\bg_n)dt-\int_0^u  \frac{1-e^{-\gamma(u-t)}}{\gamma} (\grad f(\bL'_t)-\bg_n)dt\bigg\|_{\Ltwo}\\
&\leqslant (u-s)M e^{0.5\bar h^2}\int_0^u\Big(t\|\bv_n\|_{\Ltwo}
    + 0.5\gamma t^2\|\bg_n\|_{\Ltwo}
    + \sqrt{(2/3)pt}\,\gamma t \Big) dt\\
&\leqslant (u-s)Me^{0.5\bar h^2}\Big(\frac{u^2}{2}\|\bv_n\|_{\Ltwo}+\frac{\gamma u^3}{6}\|\bg_n\|_{\Ltwo}
+ \sqrt{\frac{8p}{75}}\gamma u^{5/2}\Big)\,.
\end{align}
Collecting pieces then provides us with
\begin{align}
 \|\bL'_s-\bL'_u\|_{\Ltwo}
&\leqslant (u-s)  \|\bv_n\|_{\Ltwo}+\gamma h(u-s)  \|\bg_n\|_{\Ltwo}
    +\gamma \sqrt{\frac{2p(u-s)^3}{3}} \\
   &\quad  +(u-s)M\gamma e^{0.5\bar h^2}\Big(\frac{u^2}{2}\|\bv_n\|_{\Ltwo}+\frac{\gamma u^3}{6}\|\bg_n\|_{\Ltwo}
+ \sqrt{\frac{8p}{75}}\gamma u^{5/2}\Big)\,.
\label{eq:disL}
\end{align}
By the definition of $\bvartheta_n^{(q,i)}$, setting temporarily 
$\triangle_i(s) = 1-e^{-\gamma(U_i h -s)}\in [0, \gamma h]$ for every
$s\in[0,U_i h]$, we have
\begin{align}
    \|\bvartheta_n^{(q,i)} -  \bL'_{U_i h}\|
    &\leqslant    \bigg\|\sum_{j=1}^i\int_{(j-1)\delta}^{j\delta
    \wedge U_ih}\triangle_i(s) \,\rmd s \grad f\big(\bvartheta_n^{(q-1,j)} \big)  -\int_0^{U_ih} \triangle_i(s)\grad f(\bL'_s)\,\rmd s
    \bigg\|\\
    &\leqslant    \bigg\|\sum_{j=1}^i\int_{(j-1)\delta}^{j\delta
    \wedge U_ih}\triangle_i(s) \big(\grad f(\bvartheta_n^{(q-1,j)}) 
    - \grad f(\bL'_s)\big)\,\rmd s \bigg\|\\
    & \leqslant M\gamma h\delta\sum_{j=1}^i \|\bvartheta_n^{(q-1,j)}-\bL'_{U_jh}\|
     + \Bigg\|\sum_{j=1}^{i-1} \int_{(j-1)\delta}^{j\delta}\triangle_i(s) \big(\grad f(\bL'_s)-\grad f((\bL'_{U_jh})\big)\rmd s\Bigg\|\\
     &\qquad +  M\gamma h \int_{(i-1)\delta}^{U_i h} \|\bL'_{U_j h}-\bL'_s\|\rmd s \,,
     \label{eq:dis1}
\end{align}
where in the last inequality we have used the Lipschitz property of 
$\nabla f$ and the triangle inequality. 
Note that $Z_j:=\int_{(j-1)\delta}^{j\delta}\triangle_i(s) \big(\grad f(\bL'_s)-\grad f((\bL'_{U_jh})\big)\rmd s$ are independent random variables with zero mean.
In view of inequality~\eqref{eq:disL}, we then obtain the following result
\begin{align}
 \Bigg\|\sum_{j=1}^{i-1} Z_j\Bigg\|_{\Ltwo}^2
 &= \sum_{j=1}^{i-1} \Bigg\| \int_{(j-1)\delta}^{j\delta}\triangle_i(s) \big(\grad f(\bL'_s)-\grad f((\bL'_{U_jh})\big)\rmd s\Bigg\|_{\Ltwo}^2\\
 &\leqslant \gamma^2 h^2 M^2\delta \sum_{j=1}^{i-1}\int_{(j-1)\delta}^{j\delta}\|\bL'_s-\bL'_{U_jh}\|_{\Ltwo}^2 \rmd s\\
 &\leqslant  \gamma^2 h^2 M^2\delta \sum_{j=1}^{i-1}\int_{(j-1)\delta}^{j\delta} \Big\{ 1.53\delta^2 \|\bv_n\|_{\Ltwo}^2
 + 1.53 \gamma^2h^2\delta^2 \|\bg_n\|_{\Ltwo}^2
 + 1.01\gamma^2p\delta^3 \Big\} \rmd s\\
 &= \gamma^2 h^2 M^2\delta \sum_{j=1}^{i-1}\Big (1.53\delta^3\|\bv_n\|_{\Ltwo}^2
 + 1.53 \gamma^2h^2\delta^3\|\bg_n\|_{\Ltwo}^2
 + 1.01\gamma^2p\delta^4\Big) \,.
\end{align}
The first inequality follow from Cauchy–Schwarz inequality. 
Therefore, 
\begin{align}
\Bigg\|\sum_{j=1}^{i-1} \int_{(j-1)\delta}^{j\delta}\triangle_i(s) \big(\grad f(\bL'_s)&-\grad f((\bL'_{U_jh})\big)\rmd s\Bigg\|_{\Ltwo}
=  \Bigg\|\sum_{j=1}^{i-1} Z_j\Bigg\|_{\Ltwo}\\
&\leqslant  M\gamma h \delta \sqrt{i-1} \big(1.24\delta \|\bv_n\|_{\Ltwo}
+1.24\gamma h\delta \|\bg_n\|_{\Ltwo}
+1.01\gamma\delta\sqrt{p\delta}\big)\,.
 \label{eq:dis2}
\end{align}
Similarly, in view of~\eqref{eq:disL}, it holds that
\begin{align}
\Big\| \int_{(i-1)\delta}^{U_ih} \|\bL'_s&-\bL'_{U_ih}\| \rmd s\Big\|_{\Ltwo}^2
\leqslant \frac{1}{\delta}\int_0^\delta \Big(\int_0^u \|\bL'_{(i-1)\delta +u}-\bL'_{(i-1)\delta+v}\|_{\Ltwo}^2\rmd v\Big)^2\rmd u\\
&\leqslant \frac{1}{\delta}\int_0^\delta \Big(\int_0^u \big\{1.01|u-v|\|\bv_n\|_{\Ltwo}+1.01\gamma h|u-v|\|\bg_n\|_{\Ltwo}
+0.82\gamma\sqrt{p\delta}|u-v|\big\}\rmd v\Big)^2\rmd u\\
&\leqslant  \frac{1}{\delta}\int_0^\delta \big(0.51u^2\|\bv_n\|_{\Ltwo}+0.51\gamma hu^2\|\bg_n\|_{\Ltwo} 
+0.41\gamma u^2\sqrt{p\delta}\big)^2\rmd u\\
&\leqslant \delta^4 \big(0.16\|\bv_n\|_{\Ltwo}^2
+0.16\gamma^2h^2\|\bg_n\|_{\Ltwo}^2
+0.11\gamma^2p\delta\big)\,.
\end{align}
Combining~\eqref{eq:dis1}, \eqref{eq:dis2} and the last display, we obtain
\begin{align}
\|\bvartheta_n^{(q,i)} -  \bL'_{U_i h}\|
&\leqslant  \frac{\bar h^2}{R}\sum_{j=1}^R\|\bvartheta_n^{(q-1,j)}-\bL'_{U_jh}\|_{\Ltwo} 
+  M\gamma h\delta \big(0.4\delta\|\bv_n\|_{\Ltwo}+0.4\delta \|\bg_n\|_{\Ltwo} 
+ 0.34\gamma \delta \sqrt{p\delta}\big)\\
&\qquad +M\gamma h \delta \sqrt{R} \big(1.24\delta \|\bv_n\|_{\Ltwo}
+1.24\gamma h\delta \|\bg_n\|_{\Ltwo}
+1.01\gamma\delta\sqrt{p\delta}\big)\\
&\leqslant  \frac{\bar h^2}{R}\sum_{j=1}^R\|\bvartheta_n^{(q-1,j)}-\bL'_{U_jh}\|_{\Ltwo} 
+ \frac{\bar h^2}{\sqrt{R}}\big(1.28\delta \|\bv_n\|_{\Ltwo}
+1.28\gamma h\delta \|\bg_n\|_{\Ltwo}
+1.35\gamma\delta\sqrt{p\delta}\big) \,.
\end{align}
Setting ${\color{blue}\square} = 1.28\delta \|\bv_n\|_{\Ltwo}
+1.28\gamma h\delta \|\bg_n\|_{\Ltwo}
+1.35\gamma\delta\sqrt{p\delta}$, the last display leads to
\begin{align}
\frac{1}{R}\sum_{i=1}^R  \|\bvartheta_n^{(q,i)} -  \bL'_{U_i h}\|_{\Ltwo}
    &\leqslant \frac{\bar h^2}{R}\sum_{j=1}^R\|\bvartheta_n^{(q-1,j)} 
    - \bL'_{U_jh}\|_{\Ltwo} + \frac{\bar h^2}{\sqrt{R}}\,
    {\color{blue}\square}.
\end{align}
We then find by induction that
\begin{align}
\frac{1}{R}\sum_{i=1}^R  \|\bvartheta_n^{(q,i)} -  \bL'_{U_i h}\|_{\Ltwo}
    &\leqslant {\bar h}^{2q}\frac{1}{R}\sum_{i=1}^R  \|\bvartheta_n -  \bL'_{U_i h}\|_{\Ltwo} + 
    \frac{\bar h^2}{\sqrt{R}} {\color{blue}\square} 
    \sum_{i=0}^{q-1} \bar h^{2i}\,.
\end{align}
Invoking display~\eqref{eq:prev2} again in conjunction with 
$\bL'_0 = \bvartheta_n$ gives
\begin{align}
\frac{1}{R}\sum_{i=1}^R  \|\bvartheta_n^{(q,i)} - \bL'_{U_i h}
    \|_{\Ltwo}
    &\leqslant \frac{\bar h^2}{\sqrt{R}}  {\color{blue}\square} 
    \sum_{i=0}^{q-1} \bar h^{2i}\\ 
    &\qquad + {\bar h}^{2q}e^{0.5{\bar h^2}} 
    \Big(h\|\bv_n\|_{\Ltwo} + 0.5\gamma h^2\|\bg_n\|_{\Ltwo}
   +\sqrt{(2/3)ph}\gamma h\Big)\,.
\end{align}
The desired result follows by setting $q=Q-1$.
}
\end{proof}

We also need the following auxiliary lemma.
\begin{lemma}\label{lem:dev}For every $h>0,$ it holds that
\begin{align}
\frac{1}{R}\sum_{i=1}^R \|\bvartheta_n^{(Q-1,i)}-\bvartheta_n\|_{\Ltwo}^2
& \leqslant
2\Big(1+2{\bar h^4}+\cdots+\big(2{\bar h^4}\big)^{Q-2}\Big) \big( h\|\bv_n\|_{\Ltwo} 
+ \gamma h^2 \|\bg_n\|_{\Ltwo}
+ \gamma h\sqrt{hp}\big)^2\,.
\end{align}
\end{lemma}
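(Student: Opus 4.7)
The plan is to set up a linear recursion for
\begin{align*}
A_q \;:=\; \frac{1}{R}\sum_{i=1}^R \bigl\|\bvartheta_n^{(q,i)} - \bvartheta_n\bigr\|_{\Ltwo}^2,
\end{align*}
with base case $A_0 = 0$, and to unroll it at $q = Q-1$. Starting from the reformulation~\eqref{eq:thetaU}, I would split
\begin{align*}
\bvartheta_n^{(q,i)} - \bvartheta_n \;=\; A_{q,i} + N_{q,i},
\qquad
N_{q,i} := U_{ni}h\sqrt{2U_{ni}\gamma\eta}\,\bar\sigma_{1i}\bar\bxi_{n,i},
\end{align*}
where $A_{q,i}$ collects the drift $U_{ni}h\bar\alpha_{1i}\bv_n$ and the gradient sum over $j=1,\dots,i$, and $N_{q,i}$ is the Gaussian noise.

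For the one-step bound, I would apply Minkowski in $\Ltwo$ to $A_{q,i}$: the drift contributes at most $h\|\bv_n\|_{\Ltwo}$ (since $U_{ni}\bar\alpha_{1i}\leqslant 1$); for the gradient sum, the integrand $1-e^{-\gamma(U_{ni}h-s)}$ is bounded by $\gamma h$ and each subinterval has length at most $\delta = h/R$, so the coefficient of each $\grad f(\bvartheta_n^{(q-1,j)})$ is at most $\gamma h\delta$. Decomposing $\grad f(\bvartheta_n^{(q-1,j)}) = \bg_n + \bigl(\grad f(\bvartheta_n^{(q-1,j)}) - \bg_n\bigr)$ and using the $M$-Lipschitzness of $\grad f$, the sum contributes $\gamma h^2\|\bg_n\|_{\Ltwo}$ from the first piece and $\bar h^2\bar A_{q-1}$ from the second, where $\bar A_{q-1} := \frac{1}{R}\sum_j\|\bvartheta_n^{(q-1,j)}-\bvartheta_n\|_{\Ltwo}$ and I used $\gamma M h\delta\cdot R = \bar h^2$. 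For the noise, $\bar\sigma_{1i}\leqslant 1$ and $\bar\bxi_{n,i}\sim\mathcal N(0,\bfI_p)$ yield $\|N_{q,i}\|_{\Ltwo}^2\leqslant 2\gamma^2 h^3 p = 2 E^2$ with $E := \gamma h\sqrt{hp}$.

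Setting $D := h\|\bv_n\|_{\Ltwo} + \gamma h^2\|\bg_n\|_{\Ltwo}$, I combine the above via $(a+b)^2\leqslant 2a^2+2b^2$ and the elementary inequality $D^2+E^2\leqslant(D+E)^2 = C^2$ (valid since $D,E\geqslant 0$) to obtain
\begin{align*}
\|\bvartheta_n^{(q,i)}-\bvartheta_n\|_{\Ltwo}^2 \;\leqslant\; 2C^2 + 2\bar h^4\,\bar A_{q-1}^2.
\end{align*}
Cauchy–Schwarz then gives $\bar A_{q-1}^2\leqslant A_{q-1}$, so averaging over $i$ yields the recursion $A_q\leqslant 2C^2 + 2\bar h^4 A_{q-1}$. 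Together with $A_0=0$, a direct induction produces $A_{Q-1}\leqslant 2C^2\sum_{k=0}^{Q-2}(2\bar h^4)^k$, which is the claim.

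The main technical obstacle is controlling the cross term $2\E[A_{q,i}^\top N_{q,i}]$ that arises when one tries to split $\|A_{q,i}+N_{q,i}\|_{\Ltwo}^2$ via Pythagoras in order to reach the constant $2$ in front of $C^2$: the random vector $A_{q,i}$ is independent of $\bar\bxi_{n,i}$ only for $q=1$, since for $q\geqslant 2$ it depends on $\bvartheta_n^{(q-1,i)}$ which itself involves $\bar\bxi_{n,i}$. I expect this residual cross term to be absorbed by the $M$-Lipschitz bound on $\grad f$ (which contributes only higher-order $\bar h$ factors) without affecting the leading constant; aside from this bookkeeping, the rest of the argument is a one-step estimate followed by a geometric induction.
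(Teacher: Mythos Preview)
Your plan is exactly the paper's proof: the same recursion $A_q\leqslant 2\bar h^4 A_{q-1}+2C^2$ with $A_0=0$, obtained from the same one-step Minkowski bound and then unrolled. The only difference is that you have manufactured a difficulty that is not there.

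The ``technical obstacle'' you flag---the cross term $2\E[A_{q,i}^\top N_{q,i}]$---disappears if you do not try to isolate the noise via Pythagoras in the first place. The paper simply keeps the noise inside the triangle inequality in $\Ltwo$:
\[
\|\bvartheta_n^{(q,i)}-\bvartheta_n\|_{\Ltwo}
\;\leqslant\; h\|\bv_n\|_{\Ltwo}
+\gamma h^2\|\bg_n\|_{\Ltwo}
+\|N_{q,i}\|_{\Ltwo}
+\bar h^2\,\bar A_{q-1}
\;\leqslant\; C+\bar h^2\,\bar A_{q-1},
\]
and then squares with $(a+b)^2\leqslant 2a^2+2b^2$ and Cauchy--Schwarz on $\bar A_{q-1}^2\leqslant A_{q-1}$. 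No independence of $A_{q,i}$ and $N_{q,i}$ is ever used.

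The only point that actually needs care is the constant in the noise bound: to land on $C=h\|\bv_n\|_{\Ltwo}+\gamma h^2\|\bg_n\|_{\Ltwo}+\gamma h\sqrt{hp}$ you need $\|N_{q,i}\|_{\Ltwo}\leqslant \gamma h\sqrt{hp}$, not merely $\|N_{q,i}\|_{\Ltwo}^2\leqslant 2(\gamma h)^2hp$. This follows from the It\^o isometry applied to $\bxi_{ni}=\sqrt{2}\int_0^{hU_{ni}}\bigl(1-e^{-\gamma(hU_{ni}-u)}\bigr)\,\rmd\overline\bW_u$, since $(1-e^{-x})^2\leqslant x^2$ gives $\|\bxi_{ni}\|_{\Ltwo}^2\leqslant \tfrac{2}{3}\gamma^2h^3p$; equivalently, the loose reformulation bound $\bar\sigma_{1i}^2\leqslant 1$ is actually $\bar\sigma_{1i}^2\leqslant 1/3$. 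With that in hand, your argument and the paper's are identical.
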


\begin{proof}[Proof of \Cref{lem:dev}]
By the definition of $\bvartheta_n^{(q,i)},$ it holds that
\begin{align}
\|\bvartheta_n^{(q,i)}-\bvartheta_n\|_{\Ltwo}
& \leqslant \Big\|\frac{1-e^{-\gamma U_i h}}{\gamma}\bv_n\Big\|_{\Ltwo}
+ \sum_{j=1}^R \Big\|\int_{(j-1)\delta}^{j\delta} (1-e^{-\gamma (U_ih-s)})\,\rmd s \bg_n^{(q-1,j)}\Big\|_{\Ltwo}
+\gamma h\sqrt{hp} \\
& \leqslant h\|\bv_n\|_{\Ltwo} + \sum_{j=1}^R\delta \gamma hM\big\| \bvartheta_n^{(q-1,j)}-\bvartheta_n\big\|_{\Ltwo}
+ R\delta\gamma h\|\bg_n\|_{\Ltwo}
+ \gamma h\sqrt{hp}\\
& = \frac{\bar h^2}{R}\sum_{i=1}^R \big\| \bvartheta_n^{(q-1,j)}-\bvartheta_n\big\|_{\Ltwo}
+  h\|\bv_n\|_{\Ltwo} 
+ \gamma h^2 \|\bg_n\|_{\Ltwo}
+ \gamma h\sqrt{hp}\,.
\end{align}
Squaring both sides then gives
\begin{align}
\|\bvartheta_n^{(q,i)}-\bvartheta_n\|_{\Ltwo}^2
\leqslant 2{\bar h^4} \frac{1}{R}\sum_{i=1}^R \|\bvartheta_n^{(q-1,i)}-\bvartheta_n\|_{\Ltwo}^2
+2 \Big( h\|\bv_n\|_{\Ltwo} 
+ \gamma h^2 \|\bg_n\|_{\Ltwo}
+ \gamma h\sqrt{hp}\Big)^2\,,
\end{align}
which implies
\begin{align}
\frac{1}{R}\sum_{i=1}^R \|\bvartheta_n^{(q,i)}-\bvartheta_n\|_{\Ltwo}^2
& \leqslant \frac{2{\bar h^4}}{R} \sum_{i=1}^R \|\bvartheta_n^{(q-1,i)}-\bvartheta_n\|_{\Ltwo}^2 + 2 \Big( h\|\bv_n\|_{\Ltwo} 
+ \gamma h^2 \|\bg_n\|_{\Ltwo}
+ \gamma h\sqrt{hp}\Big)^2\,.
\end{align}
By induction, we obtain
\begin{align}
\frac{1}{R}\sum_{i=1}^R \|\bvartheta_n^{(q,i)}-\bvartheta_n\|_{\Ltwo}^2
& \leqslant
2\Big(1+2{\bar h^4}+\cdots+\big(2{\bar h^4}\big)^{q-1}\Big) \big( h\|\bv_n\|_{\Ltwo} 
+ \gamma h^2 \|\bg_n\|_{\Ltwo}
+ \gamma h\sqrt{hp}\big)^2
\end{align}
as desired by setting $q=Q-1.$
\end{proof}

\begin{lemma}
\label{lem:dev1}
When $5M\leqslant \gamma, \gamma h\leqslant 0.1$, it holds that
\begin{align}
\frac{1}{R}\sum_{r=1}^R \|\bvartheta_n^{(Q-1,r)}-\bL'_{(r-1)\delta}\|_{\Ltwo} 
&\leqslant  
\bar h^{2Q-2} (1.003h \|\bv_n\|_{\Ltwo}
+ 0.11 h \|\bg_n\|_{\Ltwo}
+ 1.415\gamma h\sqrt{hp}) \\
&\quad + \delta(1.03 \|\bv_n\|_{\Ltwo}
 + 2.04\gamma h\|\bg_n\|_{\Ltwo}
+ 1.44\gamma \sqrt{hp})\,.
\end{align}
\end{lemma}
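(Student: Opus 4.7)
The plan is to split via the triangle inequality
\begin{equation*}
\|\bvartheta_n^{(Q-1,r)}-\bL'_{(r-1)\delta}\|_{\Ltwo}
\leq \|\bvartheta_n^{(Q-1,r)}-\bL'_{U_{r}h}\|_{\Ltwo}
+\|\bL'_{U_{r}h}-\bL'_{(r-1)\delta}\|_{\Ltwo},
\end{equation*}
average over $r\in\{1,\dots,R\}$, and bound the two pieces separately. The first piece is supplied directly by \Cref{lem:9}. Under the standing hypotheses $\gamma\geq 5M$ and $\gamma h\leq 0.1$ one has $Mh\leq \gamma h/5\leq 0.02$, and hence $\bar h^{2}=\gamma Mh^{2}\leq 2\cdot 10^{-3}$; consequently $\sum_{i=0}^{Q-2}\bar h^{2i}\leq 1/(1-\bar h^{2})\leq 1.003$ and $e^{0.5\bar h^{2}}\leq 1.002$. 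These two facts make the $\tfrac{\bar h^{2}}{\sqrt R}(\cdots)$ prefactor of \Cref{lem:9} contribute only a negligible multiple of $\delta$, while its $\bar h^{2(Q-1)}e^{0.5\bar h^{2}}$ tail produces exactly the stated $\bar h^{2Q-2}$-block after rewriting $0.5\gamma h^{2}\|\bg_n\|_{\Ltwo}=(0.5\gamma h)\cdot h\|\bg_n\|_{\Ltwo}\leq 0.05\,h\|\bg_n\|_{\Ltwo}$ and using $\sqrt{2/3}\leq 0.817$.

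For the second piece, the key observation is that $s_r:=U_{r}h-(r-1)\delta$ is uniformly distributed on $[0,\delta]$ and independent of $(\bvartheta_n,\bv_n)$ and of the driving Brownian motion. I will apply the increment bound \eqref{eq:disL} conditionally on $U_{r}$, then take the outer $\Ltwo$-norm over $U_{r}$. Independence of $s_r$ from $(\bv_n,\bg_n)$ and from the Brownian motion lets Minkowski's inequality factor each product $s_r^{\alpha}\cdot(\cdots)$ as $\|s_r^{\alpha}\|_{\Ltwo}\cdot\|(\cdots)\|_{\Ltwo}$, yielding
\begin{equation*}
\|\bL'_{U_{r}h}-\bL'_{(r-1)\delta}\|_{\Ltwo}
\leq\frac{\delta}{\sqrt 3}\|\bv_n\|_{\Ltwo}
+\frac{\gamma h\delta}{\sqrt 3}\|\bg_n\|_{\Ltwo}
+\frac{\gamma\sqrt p}{\sqrt 6}\,\delta^{3/2}
+O(\bar h^{2}\delta).
\end{equation*}
The explicit moments $\mathbb E[s_r^{2}]=\delta^{2}/3$ and $\mathbb E[s_r^{3}]=\delta^{3}/4$ supply the factors $1/\sqrt 3$ and $1/2$; using $\delta\leq h$ to convert $\sqrt{p\delta}$ into $\sqrt{ph}$, this second piece is bounded by $0.578\,\delta\|\bv_n\|_{\Ltwo}+0.578\,\gamma h\delta\|\bg_n\|_{\Ltwo}+0.408\,\gamma\delta\sqrt{ph}$, plus $\bar h^{2}\delta$-remainders of order $10^{-3}\delta$.

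Adding the two pieces (each already independent of $r$, so averaging over $r$ is trivial), all three $\delta$-coefficients land comfortably below the claimed $1.03$, $2.04\,\gamma h$ and $1.44\,\gamma\sqrt{hp}$, and the $\bar h^{2Q-2}$-coefficients fit well inside $1.003$, $0.11$ and $1.415$. The only step I expect to need genuine care is the $\Ltwo$-over-$U_{r}$ argument in the second piece: \eqref{eq:disL} was established for deterministic endpoints, so one must condition on $U_{r}$, invoke the independence of $s_r$ from $(\bvartheta_n,\bv_n)$ and from the Brownian motion to pull the $s_r$-factors out of the $\Ltwo$-norm cleanly, and only then take the outer expectation. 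Once this step is written down carefully, the remaining bookkeeping mirrors the proof of \Cref{lem:9} and introduces no new ideas.
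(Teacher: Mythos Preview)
Your proposal is correct and follows essentially the same approach as the paper: the triangle inequality split, \Cref{lem:9} for the first piece, and an increment bound on $\bL'$ for the second. The only minor difference is that the paper invokes \eqref{eq:prev2} with $s=\delta$ for the second piece, whereas you use \eqref{eq:disL} together with an explicit moment computation over $U_r$; your treatment is arguably more careful (since \eqref{eq:prev2} is stated only for increments starting at $\bL'_0$) and even yields slightly sharper constants, but the structure is identical.
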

\begin{proof}
[Proof of \Cref{lem:dev1}]

By the triangle inequality, we have
\begin{align}
 \frac{1}{R}\sum_{r=1}^R \|\bvartheta_n^{(Q-1,r)} - 
 \bL'_{(r-1)\delta}\|_{\Ltwo} 
 &\leqslant 
  \frac{1}{R}\sum_{r=1}^R \|\bvartheta_n^{(Q-1,r)} - 
  \bL'_{U_rh}\|_{\Ltwo} + \frac{1}{R}\sum_{r=1}^R \|\bL'_{U_rh} 
  - \bL'_{(r-1)\delta}\|_{\Ltwo} \,.
\end{align}
Invoking~\Cref{lem:9} and inequality~\eqref{eq:prev2}, we find
{
\begin{align}
\frac{1}{R}\sum_{r=1}^R \|\bvartheta_n^{(Q-1,r)} -  \bL'_{(r-1)\delta} \|_{\Ltwo} 
 &\leqslant 
  \frac{\bar h^2}{\sqrt{R}}   \Big(\sum_{i=0}^{Q-2} \bar h^{2i}\Big)\big(1.28\delta \|\bv_n\|_{\Ltwo}
+1.28\gamma h\delta \|\bg_n\|_{\Ltwo}
+1.35\gamma\delta\sqrt{p\delta}\big)
   \\ 
    &\qquad + {\bar h}^{2(Q-2)}e^{0.5{\bar h^2}} 
    \Big(h\|\bv_n\|_{\Ltwo} + 0.5\gamma h^2\|\bg_n\|_{\Ltwo}
   +\sqrt{(2/3)ph}\gamma h\Big)\\
&\qquad + e^{\bar h^2/2}\Big(\delta\|\bv_n\|_{\Ltwo}
 + 0.5\gamma \delta^2\|\bg_n\|_{\Ltwo}
  + \gamma \delta\sqrt{2p\delta/3} \Big)
\end{align}
}
Noting that $\bar h^2\leqslant \gamma^2h^2/5\leqslant 0.002$ 
and $1+\bar h^2+\cdots+\bar h^{2Q-4}\leqslant \frac{1}{1- 
\bar h^2}\leqslant 1.003,$ it then follows that
\begin{align}
 \frac{1}{R}\sum_{r=1}^R 
 \|\bvartheta_n^{(Q-1,r)}-\bL'_{(r-1)\delta}\|_{\Ltwo} 
    &\leqslant \bar h^{2Q-2} \big(1.003h \|\bv_n\|_{\Ltwo}
        + 0.11 h \|\bg_n\|_{\Ltwo} + 1.415\gamma h\sqrt{hp} \big) \\
    &\quad + \delta \big(1.03 \|\bv_n\|_{\Ltwo} + 2.04\gamma h
    \|\bg_n\|_{\Ltwo} + 1.44\gamma \sqrt{hp} \big)
\end{align}
as desired.
\end{proof}

\allowdisplaybreaks

\subsubsection{Proof of \Cref{lem:2} (Discretization error)}

By the definition of $\bvartheta_{n+1},$ we 
have
\begin{align}
    \|\bar\bvartheta_{n+1}-\bL'_h\|_{\Ltwo}
    & = \Big\|\mathbb E\Big[\sum_{i=1}^R \delta
    \big(1-e^{-\gamma h(1-U_{i})}\big)\nabla f 
    (\bvartheta_n^{(Q-1,i)})\Big]- 
    \int_0^h\big(1-e^{-\gamma(h-s)}\big)\nabla
    f(\bL'_s)\,\rmd s\Big\|_{\Ltwo} \\
    & = \Big\| \sum_{i=1}^R \delta \E\Big[
    \big(1-e^{-\gamma h(1-U_{i})}\big)\nabla f 
    (\bvartheta_n^{(Q-1,i)})\Big]
    -\sum_{i=1}^R\int_{(i-1)\delta}^{i\delta}  \big(1-e^{-\gamma (h-s)}\big)\grad f(\bL'_s)\,\rmd s \Big\|_{\Ltwo} \\
    & = \Big\| \sum_{i=1}^R \delta \E\Big[
    \big(1-e^{-\gamma h(1-U_{i})}\big)\nabla f 
    (\bvartheta_n^{(Q-1,i)})\Big]
    -\sum_{i=1}^R\delta\E\Big[  \big(1-e^{-\gamma h(1-U_{i})}\big)\grad f(\bL'_{U_ih}) \Big]\Big\|_{\Ltwo} \\
    &\leqslant \sum_{i=1}^R\delta   M\gamma h \|
    \bvartheta_n^{(Q-1,i)} - \bL'_{U_i h}\|_{\Ltwo} ,
\end{align}
where the last inequality follows from the 
smoothness of function $f$ and the fact that 
$1-e^{-\gamma(h-U h)}\leqslant \gamma (1 - U)h$. 
By \Cref{lem:9}, we then obtain
{
\begin{align}
\|\bar\bvartheta_{n+1}-\bL'_h\|_{\Ltwo} 
& \leqslant 
\frac{\bar h^4}{\sqrt{R}}   \Big(\sum_{i=0}^{Q-2} \bar h^{2i}\Big)\big(1.28\delta \|\bv_n\|_{\Ltwo}
+1.28\gamma h\delta \|\bg_n\|_{\Ltwo}
+1.35\gamma\delta\sqrt{p\delta}\big)
   \\ 
    &\qquad + {\bar h}^{2Q}e^{0.5{\bar h^2}} 
    \Big(h\|\bv_n\|_{\Ltwo} + 0.5\gamma h^2\|\bg_n\|_{\Ltwo}
   +\sqrt{(2/3)ph}\gamma h\Big)\,.
\end{align}
}
Multiplying both sides by $\gamma$ completes the proof of the first claim.

Using the definition of $\bvartheta_{n+1}$, and 
the fact that the mean minimizes the squared 
integrated error, we get
\begin{align}
\|\bvartheta_{n+1}-\bar\bvartheta_{n+1}\|_{\Ltwo}
&\leqslant   \bigg\|\sum_{r=1}^R \delta (1-e^{\gamma h(1-U_r)})\grad f(\bvartheta_n^{(Q-1,r)})-\sum_{r=1}^R \E[\delta (1-e^{\gamma h(1-U_r)})]\grad f(\bL'_{(r-1)\delta})\bigg\|_{\Ltwo}\\
&\leqslant \bigg\|\sum_{r=1}^R \delta (1-e^{-\gamma h(1-U_r)})\Big[\grad f(\bvartheta_n^{(Q-1,r)})-\grad f(\bL'_{(r-1)\delta})\Big] \bigg\|_{\Ltwo}\\
    &\quad +\bigg\|\sum_{r=1}^R \delta\Big(e^{-\gamma h(1-U_r)} - 
    \mathbb E [e^{-\gamma h(1-U_r)}]\Big)\grad f(\bL'_{(r-1)\delta})
    \bigg\|_{\Ltwo}\\
    &\leqslant \delta \gamma hM\sum_{r=1}^R\|\bvartheta_n^{(Q-1,r)}-\bL'_{(r-1)\delta}\|_{\Ltwo}  + \sum_{r=1}^R \gamma\delta^2 \|\grad f(\bL'_{(r-1)\delta})\|_{\Ltwo}\\
    & =\bar h^2\frac{1}{R}\sum_{r=1}^R\|\bvartheta_n^{(Q-1,r)}-\bL'_{(r-1)\delta}\|_{\Ltwo} 
+ \sum_{r=1}^R \gamma \delta^2  \|\grad f(\bL'_{(r-1)\delta})\|_{\Ltwo}\,.
\label{eq:term2}
\end{align}
The second to last inequality follows from the fact that $|e^{-x}-e^{-y}|\leqslant |x-y|,\forall x,y>0.$ 
Moreover, by display~\eqref{eq:prev2}, it holds that
\begin{align}
\|\grad f(\bL'_{(r-1)\delta})\|_{\Ltwo}
&\leqslant \|\grad f(\bL'_{(r-1)\delta})-\grad f(\bL'_0)\|_{\Ltwo}+\|\grad f(\bL'_0)\|_{\Ltwo}\\
&\leqslant M \|\bL'_{(r-1)\delta}-\bL'_0\|_{\Ltwo}+\|\grad f(\bL'_0)\|_{\Ltwo}\\
&\leqslant e^{0.5\bar h^2}Mh\Big(\|\bv_n\|_{\Ltwo}+0.5\gamma h\|\bg_n\|_{\Ltwo}+\sqrt{2/3}\sqrt{hp}\gamma \Big)+\|\bg_n\|_{\Ltwo}\,.
\end{align}
Combining this with the previous display~\eqref{eq:term2} then gives
\begin{align}
    \|\bvartheta_{n+1}-\bar\bvartheta_{n+1}\|_{\Ltwo}
    &\leqslant \bar h^2 \frac{1}{R}\sum_{r=1}^R 
    \|\bvartheta_n^{(Q-1,r)}-\bL'_{(r-1)\delta}\|_{\Ltwo} \\
    &\quad + R\gamma \delta^2 \Big\{e^{0.5\bar h^2}Mh\Big(\|\bv_n\|_{\Ltwo}+0.5\gamma h\|\bg_n\|_{\Ltwo} 
    + \sqrt{2/3}\sqrt{hp}\gamma \Big)+\|\bg_n\|_{\Ltwo} \Big\}.
\end{align}
Recall that $M\leqslant \gamma/5$ and $\gamma h\leqslant 0.1,$ it then holds that
\begin{align}
\|\bvartheta_{n+1}-\bar\bvartheta_{n+1}\|_{\Ltwo}
&\leqslant \bar h^2 \frac{1}{R}\sum_{r=1}^R \|\bvartheta_n^{(Q-1,r)}-\bL'_{(r-1)\delta}\|_{\Ltwo} \\
&\quad + \gamma h\delta \Big(0.021\|\bv_n\|_{\Ltwo}+1.01\|\bg_n\|_{\Ltwo}+ 0.02\gamma \sqrt{hp} \Big)\,.
\end{align}
By \Cref{lem:dev1}, we obtain
\begin{align}
\|\bvartheta_{n+1}-\bar\bvartheta_{n+1}\|_{\Ltwo}
&\leqslant \bar h^{2Q} (1.003h \|\bv_n\|_{\Ltwo}
+ 0.11 h \|\bg_n\|_{\Ltwo}
+ 1.415\gamma h\sqrt{hp}) \\
&\qquad + M \gamma h^2\delta(1.03 \|\bv_n\|_{\Ltwo}
 + 2.04\gamma h\|\bg_n\|_{\Ltwo}
+ 1.44\gamma \sqrt{hp})\\
&\qquad + \gamma h\delta\Big(0.021\|\bv_n\|_{\Ltwo}+1.01\|\bg_n\|_{\Ltwo}+ 0.02\gamma \sqrt{hp} \Big)\\
&\leqslant 
\bar h^{2Q} (1.003h \|\bv_n\|_{\Ltwo}
+ 0.11 h \|\bg_n\|_{\Ltwo}
+ 1.415\gamma h\sqrt{hp}) \\
&\qquad + \gamma h\delta (0.22\|\bv_n\|_{\Ltwo}
+ 1.02 \|\bg_n\|_{\Ltwo}
+ 0.05\gamma \sqrt{hp}  )\\
&\leqslant (\gamma h)^{Q-1} (0.01 h \|\bv_n\|_{\Ltwo}
+ 0.01 h \|\bg_n\|_{\Ltwo}
+ 0.01 \gamma h\sqrt{hp})\\
&\qquad + \gamma h\delta (0.22\|\bv_n\|_{\Ltwo}
+ 1.02 \|\bg_n\|_{\Ltwo}
+ 0.05\gamma \sqrt{hp}  )
\,.
\end{align}
The last step follows from $\gamma h\leqslant 0.1$ and $5M\leqslant \gamma$.
Multiplying both sides by $\gamma$ completes the proof of the second claim in this lemma.

Using the same trick as used in the treatment of the first claim, by the definition of $\bv_{n+1},$ we have
\begin{align}
\|\bar\bv_{n+1}-\bV'_{h}\|_{\Ltwo}
&=\gamma \bigg\|\sum_{i=1}^R\delta \E\Big[e^{-\gamma h(1-U_i)}\grad f\big(\bvartheta_n^{(Q-1,i)}\big)\Big] -\int_0^h e^{-\gamma(h-s)}\grad f(\bL'_s)\,\rmd s  \bigg\|_{\Ltwo}\\
&\leqslant \gamma \delta \sum_{i=1}^R \bigg\| e^{-\gamma h(1-U_i)}\Big[\grad f\big(\bvartheta_n^{(Q-1,i)}\big)-\grad f(\bL'_{U_ih})\Big] \bigg\|_{\Ltwo}\\
&\leqslant  \frac{\gamma Mh}{R} \sum_{i=1}^R \|\bvartheta_n^{(Q-1,i)}-\bL'_{U_ih}\|_{\Ltwo} \,.
\end{align}
By \Cref{lem:9}, we obtain the third claim of the lemma.

The proof of the fourth claim follows readily by employing the same technique as the proof for the second claim.

\subsubsection{Proof of \Cref{lem:8}}
From \eqref{eq:thetaU}, \eqref{eq:thetan+1}  and
\eqref{eq:vn+1}, using the notation $\varrho_i = 
e^{-\gamma h(1-U_i)}$ it follows that
\begin{align}
\|\bv_{n+1}\|_{\Ltwo}^2 
& \leqslant (1-\bar\alpha_2\eta)^2\|\bv_n\|_{\Ltwo}^2 
    +\gamma^2\bigg\| \sum_{i=1}^R \delta \varrho_i
    \bg_n^{(Q-1,i)}\bigg\|^2_{\Ltwo} + 2\gamma \eta p\\
    &\qquad - 2(1 - \bar\alpha_2 \eta ) \mathbb E\bigg[
    \bv_n^\top\gamma \sum_{i=1}^R\delta \varrho_i
    \bg_n^{(Q-1,i)}\bigg] + \sqrt{2\gamma \eta p}\bigg\| 
    \sum_{i=1}^R \gamma\delta \varrho_i\big( 
    \bg_n^{(Q-1,i)}-\bg_n\big)\bigg\|_{\Ltwo} \\
&\leqslant  (1-\bar\alpha_2\eta  
        )^2\|\bv_n\|_{\Ltwo}^2 
        +2\delta\gamma^2\bigg\{\bigg\| \sum_{i=1}^R e^{-\gamma 
        h(1-U_i)}\big(\bg_n^{(Q-1,i)}-\bg_n\big)\bigg\|^2_{\Ltwo} 
        + \bigg\| \sum_{i=1}^R \varrho_i\bg_n
        \bigg\|^2_{\Ltwo}\bigg\}\\
        &\qquad+2\gamma \eta p
       -2(1-
       \bar\alpha_2 \eta ) \mathbb E\bigg[\bv_n^\top\gamma \sum_{i=1}^R\delta \varrho_i\big(\bg_n^{(Q-1,i)}-\bg_n\big)\bigg] \\
       &\qquad  -2(1- \bar\alpha_2 \eta ) \mathbb E\bigg[\bv_n^\top 
       \gamma \sum_{i=1}^R\delta \varrho_i\bg_n\bigg]  + \gamma \eta p+\frac{\gamma^2}{2}\bigg\|\sum_{i=1}^R\delta \varrho_i\big(\bg_n^{(Q-1,i)}-\bg_n\big)\bigg\|_{\Ltwo} ^2\\
&\leqslant (1-\bar\alpha_2\eta  
        )^2\|\bv_n\|_{\Ltwo}^2 
        +3\gamma \eta p
        +2.5 \gamma^2 \bigg\|\sum_{i=1}^R\delta \varrho_i\big(\bg_n^{(Q-1,i)}-\bg_n\big)\bigg\|_{\Ltwo} ^2 \\
        &\qquad + 2\gamma^2h^2\big\| \bg_n\big\|^2_{\Ltwo}
        -2(1-
       \bar\alpha_2 \eta )\gamma \delta \mathbb E\Big[\bv_n^\top \bg_n\Big]\E\bigg[ \sum_{i=1}^R \varrho_i\bigg] 
        \\
        &\qquad -2(1-
       \bar\alpha_2 \eta ) \mathbb E\bigg[\bv_n^\top\gamma \sum_{i=1}^R\delta \varrho_i\big(\bg_n^{(Q-1,i)}-\bg_n\big)\bigg] \\
&\leqslant  (1-\bar\alpha_2\eta  
        )^2\|\bv_n\|_{\Ltwo}^2 
        +3\gamma \eta p
         + 2\gamma^2h^2 \big\| \bg_n\big\|^2_{\Ltwo}
         +2.5M^2\gamma^2 \frac{h^2}{R} \sum_{i=1}^R  \big\| \bvartheta_n^{(Q-1,i)} -\bvartheta_n\big\|^2_{\Ltwo}
       \\
        &\qquad 
       +2M\big(\sqrt{\gamma }h\|\bv_n\|_{\Ltwo} \big)\Big(\sqrt{\gamma} \frac{1}{R}\sum_{i=1}^R \big\|\bvartheta_n^{(Q-1,i)}-\bvartheta_n\big\|_{\Ltwo}\Big)
       -2 \bar\alpha_2 \eta(1-
       \bar\alpha_2 \eta ) \mathbb E[\bv_n^\top\bg_n] \\
&\leqslant (1-\bar\alpha_2\eta  
        )^2\|\bv_n\|_{\Ltwo}^2 
        +3\eta\gamma p
         + 2\eta^2 \big\| \bg_n\big\|^2_{\Ltwo}
         +{\bar h^2}\|\bv_n\|_{\Ltwo}^2\\
       & \qquad -2 \bar\alpha_2\eta(1-
       \bar\alpha_2 \eta ) \mathbb E[\bv_n^\top\bg_n] 
       + 2.05M\gamma \big(h\|\bv_n\|_{\Ltwo}+\gamma h^2\|\bg_n\|+\gamma h\sqrt{hp}\big)^2\,.
    \end{align}
The last but one inequality makes use of the smoothness of function $f,$ Cauchy–Schwarz inequality, and the fact that $\sum_{i=1}^R\E[e^{-\gamma h(1-U_i)}]=R\bar\alpha_2.$
The last inequality follows from \Cref{lem:dev}. 
Since for $\eta\leqslant 0.1,$ we have~$\bar\alpha_2\geqslant 0.95$, this implies
\begin{align}
(1 - \bar\alpha_2\eta )^2 + {\bar h^2}+6.15M \gamma h^2 
 \leqslant (1-0.95\eta)^2 +0.143\eta
 \leqslant 1-1.80975\eta+0.143\eta
 \leqslant 1- 1.66\eta\,.
\end{align}
Therefore, we obtain
\begin{align}
\|\bv_{n+1}\|_{\Ltwo}^2 & \leqslant {(1 - 1.66 \eta ) \| 
\bv_n\|_{\Ltwo}^2 - 2\bar\alpha_2\eta  (1 - 
\eta \bar\alpha_2) \mathbb E [\bv_n^\top \bg_{n}] + 2.02\eta^2 \|\bg_n\|_{\Ltwo}^2 +
3.02\eta\gamma p}
\label{eq:43}
\end{align}
as desired.

The next step is to get an upper bound on 
    $\mathbb E[\bv_{n+1}^\top\bg_{n+1}] - \mathbb
    E[\bv_{n}^\top\bg_n]$ in order to prove
    \eqref{eq:help2}. To this end, we first
    note that
 \begin{align}
        \mathbb E[\bv_{n+1}^\top\bg_{n+1}]
&\leqslant \mathbb E[\bv_{n}^\top\bg_n]  + 
           \mathbb E[\bv_{n+1}^\top(\bg_{n+1}-\bg_n)] + \mathbb E[(\bv_{n+1} -\bv_n)^\top\bg_n]\\
&\leqslant \mathbb E[\bv_{n}^\top\bg_n] +  
        M\|\bv_{n+1}\|_{\Ltwo}\|\bvartheta_{n+1}-\bvartheta_n\|_{\Ltwo} -\bar\alpha_2\eta \mathbb E[\bv_n^\top\bg_n] \\
        &\qquad - \gamma\E\Big[\bg_n^\top\sum_{i=1}^R\delta e^{-\gamma h(1-U_i)}\grad f\big(\bvartheta_n^{(Q-1,i)}\big)\Big]\\
&\leqslant (1- \bar\alpha_2\eta )\mathbb E[\bv_{n}^\top\bg_n] 
         + M\|\bv_{n+1}\|_{\Ltwo}\|\bvartheta_{n+1}-\bvartheta_n\|_{\Ltwo} \\
         &\qquad
         -\gamma \E\Big[\bg_n^\top \sum_{i=1}^R\delta  e^{-\gamma h(1-U_i)}\bg_n + \bg_n^\top \sum_{i=1}^R\delta e^{-\gamma h(1-U_i)}\big(\grad f(\bvartheta_n^{(Q-1,i)})-\bg_n\big)\Big] \\
& \leqslant (1- \bar\alpha_2\eta )\mathbb E[\bv_{n}^\top\bg_n] 
        +  M\|\bv_{n+1}\|_{\Ltwo}\|\bvartheta_{n+1}-\bvartheta_n\|_{\Ltwo} 
        -\bar\alpha_2\eta \|\bg_n\|_{\Ltwo}^2  \\
        &\qquad + M\gamma h \|\bg_{n}\|_{\Ltwo}\frac{1}{R}\sum_{i=1}^R \|\bvartheta_n^{(Q-1,i)}-\bvartheta_n\|_{\Ltwo} \\
& \leqslant (1- \bar\alpha_2\eta )\mathbb E[\bv_{n}^\top\bg_n] 
        +  M\|\bv_{n+1}\|_{\Ltwo}\|\bvartheta_{n+1}-\bvartheta_n\|_{\Ltwo} 
        -\bar\alpha_2\eta \|\bg_n\|_{\Ltwo}^2  \\
        &\qquad + 1.4143 M\gamma h \|\bg_{n}\|_{\Ltwo} \big(h\|\bv_n\|_{\Ltwo}+\gamma h^2\|\bg_n\|_{\Ltwo}+\eta\sqrt{hp}\big) \\
& \leqslant (1- \bar\alpha_2\eta )\mathbb E[\bv_{n}^\top\bg_n] 
        +  M\|\bv_{n+1}\|_{\Ltwo}\|\bvartheta_{n+1}-\bvartheta_n\|_{\Ltwo} 
        -\bar\alpha_2\eta \|\bg_n\|_{\Ltwo}^2  \\
        &\qquad + 1.001 \gamma^2h^2 Mh  \|\bg_{n}\|^2_{\Ltwo} 
        + 0.5Mh\big(\|\bv_n\|_{\Ltwo}+\gamma h\|\bg_n\|_{\Ltwo}+\gamma\sqrt{hp}\big)^2    
\label{eq:inprod}
         \,.
    \end{align}
The last but one display follows from \Cref{lem:dev}.
We now aim to derive the bounds for $ \|\bv_{n+1}\|_{\Ltwo}$ and $\|\bvartheta_{n+1}-\bvartheta_n\|_{\Ltwo}$.
We first note that
\begin{align}
  \|\bv_{n+1}\|_{\Ltwo}
&\leqslant \|\bv_n\|_{\Ltwo} 
        + \gamma\sum_{i=1}^R\delta \big\|\grad f(\bvartheta_n^{(Q-1,i)})\big\|_{\Ltwo}
        +\sqrt{2\eta\gamma p}\\  
&\leqslant  \|\bv_n\|_{\Ltwo} 
        + \gamma\sum_{i=1}^R\delta \big\|\grad f(\bvartheta_n^{(Q-1,i)})-\bg_n\big\|_{\Ltwo}
        +\gamma h\|\bg_n\|_{\Ltwo}
        +\sqrt{2\eta\gamma p}\\ 
&\leqslant  \|\bv_n\|_{\Ltwo} 
        + \gamma M h\frac{1}{R}\sum_{i=1}^R\big\|\bvartheta_n^{(Q-1,i)}- \bvartheta_n\big\|_{\Ltwo}
        +\gamma h\|\bg_n\|_{\Ltwo}
        +\sqrt{2\eta\gamma p}\\
&\leqslant \|\bv_n\|_{\Ltwo} 
        + 1.4143\gamma M h\big(h\|\bv_n\|_{\Ltwo}+\gamma h^2\|\bg_n\|_{\Ltwo}+\eta\sqrt{hp}\big)
        +\gamma h\|\bg_n\|_{\Ltwo}
        +\sqrt{2\eta\gamma p}\,.
\end{align}
The last inequality follows from \Cref{lem:dev}. 
Note that $M\geqslant 5\gamma$ and $\eta\leqslant 0.1,$ rearranging the display gives
    \begin{align}
        \|\bv_{n+1}\|_{\Ltwo}
        &\leqslant 1.003\big(\|\bv_n\|_{\Ltwo} +  
        \eta\|\bg_n\|_{\Ltwo} 
        +\sqrt{2\eta\gamma p}\big).\label{eq:46}
    \end{align}
Employing \Cref{lem:dev}, we also infer that
\begin{align}
\|\bvartheta_{n+1}-\bvartheta_n\|_{\Ltwo}
&\leqslant h\|\bv_n\|_{\Ltwo} + \sum_{i=1}^R\delta \gamma h \big\|\grad f(\bvartheta_n^{(Q-1,i)})\big\|_{\Ltwo} +\eta\sqrt{hp}\\
&\leqslant  h\|\bv_n\|_{\Ltwo} 
+ \frac{\bar h^2}{R}\sum_{i=1}^R \big\|\bvartheta_n^{(Q-1,i)}-\bvartheta_n\big\|_{\Ltwo} 
+ \gamma h^2 \|\bg_n\|_{\Ltwo} 
+\eta\sqrt{hp}\\
&\leqslant  h\|\bv_n\|_{\Ltwo} 
+ 1.4143 {\bar h^2} \big(h\|\bv_n\|_{\Ltwo}+\gamma h^2\|\bg_n\|_{\Ltwo}+\eta\sqrt{hp}\big) 
+ \gamma h^2 \|\bg_n\|_{\Ltwo} 
+\eta\sqrt{hp}\\
&\leqslant 1.003 \big(h \|\bv_n\|_{\Ltwo} + \gamma h^2\|\bg_n\|_{\Ltwo} +\eta\sqrt{hp} \big)\,.
\end{align}
Thus, we obtain
\begin{align}
\|\bv_{n+1}\|_{\Ltwo}\|\bvartheta_{n+1}-\bvartheta_n\|_{\Ltwo}
\leqslant 3.02h\big(\|\bv_n\|_{\Ltwo}^2+  
        \eta^2\|\bg_n\|_{\Ltwo}^2 + 
        2\eta\gamma  p\big)\,.
\end{align}
Plugging this back into the display~\eqref{eq:inprod} provides us with
\begin{align}
  \mathbb E[\bv_{n+1}^\top\bg_{n+1}]
& \leqslant (1- \bar\alpha_2\eta )\mathbb E[\bv_{n}^\top\bg_n] 
        +  3.02Mh\big(\|\bv_n\|_{\Ltwo}^2+  
        \eta^2\|\bg_n\|_{\Ltwo}^2 + 
        2\eta\gamma  p\big)
        -\bar\alpha_2\eta \|\bg_n\|_{\Ltwo}^2  \\
        &\qquad + 1.001 \gamma^2h^2 Mh  \|\bg_{n}\|^2_{\Ltwo} 
        + 0.5Mh\big(\|\bv_n\|_{\Ltwo}+\gamma h\|\bg_n\|_{\Ltwo}+\gamma\sqrt{hp}\big)^2  \\
& \leqslant (1- \bar\alpha_2\eta )\mathbb E[\bv_{n}^\top\bg_n] 
- 0.938\eta
        \|\bg_n\|_{\Ltwo}^2  + 0.91  \eta\|\bv_n\|_{\Ltwo
        }^2 +  1.51 \gamma \eta^2 p.\label{eq:47}
\end{align}
as desired. 

Lastly, using the Lipschitz property of $\nabla f$, 
\Cref{lem:dev} and the notation ${\color{blue}\triangle} = 
0.502M\gamma\big(h\|\bv_n\|_{\Ltwo}+\gamma h^2\|\bg_n\|_{\Ltwo}+\gamma h\sqrt{hp}\big)^2$, we have
 \begin{align}
\gamma\mathbb E[f_{n+1} - f_n] 
& \leqslant \gamma\mathbb E[ \bg_n^\top(\bvartheta_{n+1} 
        - \bvartheta_n)] + (M \gamma/2)\| (\bvartheta_{n+1} -
        \bvartheta_n) \|^2_{\Ltwo}\\
& \leqslant \E\bigg[ \bg_n^\top \Big(\bar\alpha_2 \gamma h\bv_n -\gamma \sum_{i=1}^R\gamma h \delta \frac{1-e^{-\gamma h(1-U_i)}}{\gamma h}\grad f(\bvartheta_n^{(Q-1,i)}) \Big)\bigg]  +  {\color{blue}\triangle} \\
& = \bar\alpha_2\gamma h\E[\bv_n^\top \bg_n] + {\color{blue}\triangle}\\
    &\qquad -\gamma^2\frac{h^2}{R}\sum_{i=1}^R \E\bigg[
    \frac{1-e^{-\gamma h(1-U_i)}}{\gamma h}\bigg\{\|\bg_n\|_{\Ltwo}^2 + 
    \bg_n^\top\big(\grad f(\bvartheta_n^{(Q-1,i)})-\bg_n\big)\bigg\}\bigg]   \,.
\end{align}
Note that 
\begin{align}
  \frac{1}{R}\sum_{i=1}^R \E\Big[\frac{1-e^{-\gamma h(1-U_i)}}{\gamma h}\Big] 
= \frac{\gamma h-1+e^{\gamma h}}{(\gamma h)^2}\in [0.484,0.5),\qquad\text{ and }\qquad
 \frac{1-e^{-\gamma h(1-U_i)}}{\gamma h} \leqslant 1\,.
\end{align}
It then follows that     
\begin{align}
\gamma\mathbb E[f_{n+1} - f_n] 
& \leqslant \bar\alpha_2\gamma h\E[\bv_n^\top \bg_n] -0.484\eta^2 \|\bg_n\|_{\Ltwo}^2 +0.5 \gamma^4h^4\|\bg_n\|_{\Ltwo}^2
+ \frac{M^2}{2R}\sum_{i=1}^R\|\bvartheta_n^{(Q-1,i)}-\bvartheta_n
\|_{\Ltwo}^2 + {\color{blue}\triangle} \\
&\leqslant  \bar\alpha_2\gamma h\E[\bv_n^\top \bg_n] -0.484\eta^2 \|\bg_n\|_{\Ltwo}^2 + 0.5 \gamma^4h^4\|\bg_n\|_{\Ltwo}^2 +
1.001M^2{\color{blue}\triangle}    + {\color{blue}\triangle} \\
&\leqslant  \bar\alpha_2\gamma h\E[\bv_n^\top \bg_n] -0.474\eta^2 \|\bg_n\|_{\Ltwo}^2 +0.33\eta^2 \|\bv_n\|_{\Ltwo}^2+0.324\eta^3\gamma p\,.
\end{align}
This completes the proof of the lemma.

\end{document}